\newcommand{\tomemail}{\href{mailto:tom.bachmann@zoho.com}{tom.bachmann@zoho.com}}
\newtheorem{proposition}{Proposition}
\newtheorem{corollary}[proposition]{Corollary}
\newtheorem{lemma}[proposition]{Lemma}
\newtheorem{theorem}[proposition]{Theorem}
\newtheorem*{conjecture*}{Conjecture}
\newtheorem*{theorem*}{Theorem}
\newtheorem*{corollary*}{Corollary}
\newtheorem*{proposition*}{Proposition}
\newtheorem*{lemma*}{Lemma}
\theoremstyle{definition}
\newtheorem{definition}[proposition]{Definition}
\newtheorem*{definition*}{Definition}
\newtheorem*{construction*}{Construction}
\theoremstyle{remark}
\newtheorem{remark}[proposition]{Remark}
\newtheorem*{remark*}{Remark}
\newtheorem{example}[proposition]{Example}
\newtheorem*{example*}{Example}
\newcommand{\id}{\operatorname{id}}
\newcommand{\Z}{\mathbb{Z}}
\def\C{\mathbb C}
\newcommand{\N}{\mathbb{N}}
\newcommand{\Q}{\mathbb{Q}}
\newcommand{\F}{\mathbb{F}}
\let\scr=\mathcal
\let\bb=\mathbb
\newcommand{\Gm}{{\mathbb{G}_m}}
\newcommand{\Gmp}[1]{{\mathbb{G}_m^{\wedge #1}}}
\def\A{\bb A}
\def\P{\bb P}
\def\R{\bb R}
\newcommand{\1}{\mathbbm{1}}
\newcommand{\eff}{{\text{eff}}}
\newcommand{\veff}{{\text{veff}}}
\newcommand{\SH}{\mathcal{SH}}
\DeclareMathOperator*{\colim}{colim}
\let\lim=\relax
\DeclareMathOperator*{\lim}{lim}
\def\Map{\mathrm{Map}}
\def\PSh{\mathcal{P}}
\def\Spc{\mathcal{S}\mathrm{pc}{}}
\def\Fin{\cat F\mathrm{in}}
\def\Fun{\mathrm{Fun}}
\def\red{\mathrm{red}}
\newcommand{\Spec}{\mathrm{Spec}}
\newcommand{\gp}{\mathrm{gp}}
\newcommand{\wequi}{\simeq}
\def\adj{\leftrightarrows}
\DeclareRobustCommand{\ul}{\underline}
\newcommand{\heart}{\heartsuit}
\def\op{\mathrm{op}}
\let\cat=\mathrm
\def\Sm{{\cat{S}\mathrm{m}}}
\def\Nis{\mathrm{Nis}}
\def\mot{\mathrm{mot}}
\newcommand{\et}{{\acute{e}t}}
\newcommand{\ret}{{r\acute{e}t}}
\newcommand{\fr}{\mathrm{fr}}
\newcommand{\lra}[1]{\langle #1 \rangle}
\def\ph{\mathord-}
\numberwithin{proposition}{section}
\numberwithin{equation}{section}
\renewcommand{\todo}[1]{}
\newcommand{\NB}[1]{}
\newcommand{\NB}[1]{\todo[color=gray!40]{#1}}
\newcommand{\fpsr}[1]{\llbracket #1 \rrbracket}
\newcommand{\Sper}{\mathrm{Sper}}
\newcommand{\KO}{\mathrm{KO}}
\newcommand{\ko}{\mathrm{ko}}
\newcommand{\kw}{\mathrm{kw}}
\newcommand{\KW}{\mathrm{KW}}
\newcommand{\HW}{\mathrm{HW}}
\newcommand{\HZ}{\mathrm{H}\Z}
\newcommand{\MSL}{\mathrm{MSL}}
\newcommand{\MSp}{\mathrm{MSp}}
\newcommand{\SL}{\mathrm{SL}}
\newcommand{\W}{\mathrm{W}}
\newcommand{\I}{\mathrm{I}}
\newcommand{\Cor}{\mathrm{Cor}}
\newcommand{\SHSfr}{\SH^{S^1\fr}}
\newcommand{\Shv}{\mathcal{S}\mathrm{hv}}
\newcommand{\HZW}{\mathrm{H}_W\Z}
\newcommand{\HtildeZ}{\mathrm{H}\tilde\Z}
\newcommand{\comp}{{{\kern -.5pt}\wedge}}
\newcommand{\vcd}{\mathrm{vcd}}
\newcommand{\cd}{\mathrm{cd}}
\newcommand{\fib}{\mathrm{fib}}
\newcommand{\adamsphi}{\varphi}
\newcommand{\adamspsi}{\psi}
\title{\texorpdfstring{$\eta$}{eta}-periodic motivic stable homotopy theory over Dedekind domains}
\date{\today}
\author{Tom Bachmann}
\address{Mathematisches Institut, LMU Munich, Munich, Germany}
\email{\tomemail}
\begin{document}

\maketitle

\begin{abstract}
We construct well-behaved extensions of the motivic spectra representing generalized motivic cohomology and connective Balmer--Witt $K$-theory (among others) to mixed characteristic Dedekind schemes on which $2$ is invertible.
As a consequence we lift the fundamental fiber sequence of $\eta$-periodic motivic stable homotopy theory established in \cite{bachmann-eta} from fields to arbitrary base schemes, and use this to determine (among other things) the $\eta$-periodized algebraic symplectic and $\SL$-cobordism groups of mixed characteristic Dedekind schemes containing $1/2$.
\end{abstract}

\setcounter{tocdepth}{1}
\tableofcontents

\todo{image of j spectrum?}
\section{Introduction} \label{sec:intro}
Let $k$ be a field.
We have spectra \[ \KO, \KW, \kw, \HW, \HZW, \HtildeZ \in \SH(k) \] representing interesting cohomology theories for smooth $k$-varieties: $\KO$ represents hermitian $K$-theory, $\KW$ represents Balmer--Witt $K$-theory, $\HW$ represents cohomology with coefficients in the sheaf of Witt groups, $\HtildeZ$ represents higher Chow--Witt groups; $\kw, \HZW$ are more technical but have featured prominently in e.g. \cite{bachmann-eta,bachmann-very-effective}.
The first aim of this article is to define extensions of these spectra to other bases.
The utility of such extensions is manifold; e.g. they can be used in integrality arguments \cite{bachmann-euler}.

Thus let $D$ be the prime spectrum of a Dedekind domain, perhaps of mixed characteristic, or a field.
Consider the motivic spectrum $\KO_D \in \SH(D)$ (see e.g. \cite[\S\S2.2, 4.1]{bachmann-norms} for a definition of the motivic stable category $\SH(D)$), representing Hermitian $K$-theory \cite{hornbostel2005a1}.
From this we can build the following related spectra
\begin{align*}
\KW_D &= \KO_D[\eta^{-1}] \\
\kw_D &= \tau_{\ge 0} \KW_D \\
\HW_D &= \tau_{\le 0} \kw_D \\
\HZW_D &= f_0(\HW_D) \\
\ul{K}^W_D &= \tau_{\le 0} \HZW_D \\
\HtildeZ_D &= \tau_{\le 0}^\eff(\1_D).
\end{align*}
Here by $\tau_{\le 0}, \tau_{\ge 0}$ we mean the truncation in the homotopy $t$-structure on $\SH(D)$, and by $\tau_{\le 0}^\eff$ we mean the truncation in $\SH(D)^\eff$ (see e.g. \cite[\S B]{bachmann-norms}).
If $f: D' \to D$ is any morphism there are natural induced \emph{base change maps} $f^*(\kw_D) \to \kw_{D'}$, and so on.\NB{$f^*(\kw) \in \SH(D')_{\ge 0}$ so $f^*(\kw) \to f^*(\KW) \to \KW$ factors through $\kw_{D'} \to \KW_{D'}$. This corresponds to $\kw \to f_*(\kw)$ yielding $\kw \to f_*(\HW)$. Since $f_*(\HW) \in \SH(D)_{\le 0}$ this factors through $\HW \to f_*(\HW)$, corresponding to $f^*(\HW) \to \HW$. Etc.}
It thus makes sense to ask if the spectra above are stable under base change, i.e., if the base change maps are equivalences.
This is true for $\KO$ (and $\KW$), since this spectrum can be built out of (orthogonal or symplectic) Grassmannians \cite{schlichting2015geometric,panin2010motivic}, which are stable under base change.
Our main result is the following.

\begin{theorem}[see Theorem \ref{thm:main-1}] \label{thm:intro-main}
All of the above spectra are stable under base change among Dedekind domains or fields, provided that they contain $1/2$.
\end{theorem}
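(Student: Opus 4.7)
My plan is to reduce to a handful of key base cases and then leverage geometric identifications of each spectrum. Since base-change maps compose functorially, it suffices to treat each morphism $f: \Spec D' \to \Spec D$ individually, and by reducing to stalks via Zariski descent I can focus on two archetypes: (i) essentially smooth morphisms between local Dedekind schemes (including localizations and field extensions), and (ii) the closed inclusion $i: \Spec k \hookrightarrow \Spec \mathcal{O}$ of a residue field into a DVR.

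For case (i), pullback along an essentially smooth map is t-exact for the homotopy t-structure on $\SH$ and commutes with the effective cover functors $f_0$ and $\tau_{\le 0}^\eff$. Combined with the known base-change stability of $\KO$ (through its Grassmannian model, cited in the paper), this gives stability of $\KW_D = \KO_D[\eta^{-1}]$; running through the defining formulas $\kw_D = \tau_{\ge 0}\KW_D$, $\HW_D = \tau_{\le 0}\kw_D$, $\HZW_D = f_0\HW_D$, $\ul{K}^W_D = \tau_{\le 0}\HZW_D$, and $\HtildeZ_D = \tau_{\le 0}^\eff \1_D$ then yields stability of all the remaining spectra automatically.

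The hard case is (ii), where $i^*$ is only right t-exact and generally does not commute with $f_0$ or $\tau_{\le 0}^\eff$. Here I would identify each truncation-defined spectrum with a geometrically stable model. Concretely: realize $\HW_D$ as the Eilenberg--MacLane spectrum on the homotopy module whose underlying Nisnevich sheaf on $\Sm_D$ is the Witt-group sheaf, which is manifestly compatible with residue-field specialization when $2$ is invertible; model $\kw_D$ as the $\eta$-inverted connective cover of a symplectic-Grassmannian spectrum; and identify $\HtildeZ_D$ with a Milnor--Witt style generalized motivic cohomology spectrum built from framed correspondences or Chow--Witt complexes on $\Sm_D$. With such identifications in place, the base-change map along $i$ becomes an equivalence on each side by a direct geometric comparison.

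The principal obstacle I anticipate is establishing these geometric models over mixed-characteristic Dedekind bases. The field case is by now well understood (cf.~\cite{bachmann-eta,bachmann-very-effective}), but extending the framed-correspondence and Chow--Witt foundations to Dedekind schemes in enough generality to furnish clean models for $\tau_{\le 0}^\eff \1_D$ and $f_0\HW_D$ is the technically most delicate step. Once these foundations are in place, the theorem should follow cleanly; without them, a more roundabout route via slice-spectral-sequence comparison or a rigidity-type argument in the $\eta$-periodic regime may be needed.
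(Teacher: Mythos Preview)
Your proposal correctly identifies the hard case (ii) but stops precisely where the real work begins: you acknowledge that constructing the ``geometrically stable models'' over mixed-characteristic Dedekind bases is the principal obstacle, and you do not resolve it. This is not a technicality to be filled in later---it is the entire content of the theorem. The paper's proof supplies these models, but by a route you do not anticipate: rather than splitting into essentially-smooth versus closed-immersion cases and wrestling with $i^*$, it gives \emph{alternative} definitions of each spectrum that are manifestly stable under arbitrary base change, and then proves separately (Corollary~\ref{cor:main1}) that these agree with the truncation-based definitions over each $D$. Concretely: $\kw \wequi \ko^\fr[\eta^{-1}]$ where $\ko^\fr := \Sigma^\infty_\fr \Omega^\infty_\fr \KO$, the key input being that $\eta$ already lives in $\SHSfr$ (Lemma~\ref{lemm:eta-framed}) and $\Omega^\infty\KO$ is an orthogonal Grassmannian; $\HW \wequi \kw/\beta$; $\ul{K}^W$ is built by hand as the $\Gm$-spectrum $(\ul{I}^n)_n$, with base-change stability deduced from $\ul{K}^W/\eta \wequi (\HZ/2)/\tau$ (Spitzweck) and $\ul{K}^W[\eta^{-1}] \wequi \HW$ (via Jacobson's $\ul{I}^n/\ul{I}^{n+1} \wequi \ul{k}^M_n$); and $\HtildeZ, \HZW$ are \emph{redefined} as pullbacks of diagrams involving $\HZ, \HZ/2, \ul{k}^M, \ul{K}^W$. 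The Eilenberg--MacLane and Chow--Witt models you hope to invoke are essentially outputs of this method, not inputs you can assume.

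A secondary gap: your case split does not cover all morphisms of Dedekind schemes. A ramified local extension of DVRs is neither essentially smooth nor a residue-field inclusion, so the reduction to the two archetypes fails as stated. This problem evaporates once one has base-change-stable models over $\Spec(\Z[1/2])$, since then every $E_D$ is the pullback of $E_{\Z[1/2]}$ and the base-change map for an arbitrary $f: D' \to D$ is automatically an equivalence---which is exactly why the paper's strategy avoids your case analysis entirely.
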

Over fields, the above definitions of spectra coincide with other definitions that can be found in the literature (see \cite{bachmann-very-effective,Deglise17,ananyevskiy2017very,A1-alg-top}; this is proved in Lemma \ref{lemm:defn-compat}).
In other words, we construct well-behaved extensions to motivic stable homotopy theory over Dedekind domains of certain motivic spectra which so far have mainly been useful over fields.
In fact, we show that all of the above spectra (which we have built above out of $\KO$ by certain universal properties) admit more explicit (and so calculationally useful) descriptions.
For example we show that \[ \ul{\pi}_0(\ul{K}^W)_* = \ul{I}^*, \quad \ul{\pi}_i(\ul{K}^W)_* = 0 \text{ for } i \ne 0; \] here $\ul{I}$ is the Nisnevich sheaf associated with the presheaf of fundamental ideals in the Witt rings.
\begin{remark}
The above description of $\ul\pi_* \ul{K}^W_*$ asserts in particular that the sheaf $\ul{W}$ is strictly $\A^1$-invariant.
In fact variants of this property form a starting point of our proofs, and are the reason for assuming that $S$ is Dedekind.
Unwinding the arguments, one finds that we ultimately rely on the Gersten conjecture (for étale cohomology of essentially smooth schemes over discrete valuation rings) via \cite{geisser2004motivic}.
\end{remark}
\begin{remark}
Using recent results on Gersten resolutions \cite{deshmukh2020nisnevich}, our results may be extended to regular J2 schemes instead of just Dedekind schemes.
Alternatively, using the cdh topology instead of the Nisnevich one, they may be extended to all schemes.
These facts will be recorded elsewhere.
\end{remark}

Our motivating application of these results is as follows.
Using Theorem \ref{thm:intro-main}, together with the fact that equivalences (and connectivity) of motivic spectra over $D$ can be checked after pullback to the residue fields of $D$ \cite[Proposition B.3]{bachmann-norms}, one obtains essentially for free the following extension of \cite{bachmann-eta}.
\begin{corollary}[see Theorem \ref{thm:main2}]
For $D$ as above, there is a fiber sequence \[ \1[\eta^{-1}]_{(2)} \to \kw_{(2)} \to \Sigma^4 \kw_{(2)} \in \SH(D). \]
\end{corollary}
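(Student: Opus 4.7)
The plan is to lift the fiber sequence of \cite{bachmann-eta} from fields to $D$ via the base-change stability of Theorem \ref{thm:intro-main} together with the residue-field criterion of \cite[Proposition B.3]{bachmann-norms}. It is convenient to first establish the sequence over $S := \Spec \Z[1/2]$ and then pull back along the structure morphism $D \to S$.

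First, over $S$, I would construct the candidate diagram of maps. The first map is the $2$-localized and $\eta$-inverted unit $\1_S[\eta^{-1}]_{(2)} \to \kw_{S,(2)}$. The second uses Bott periodicity of $\KW_S$: the composite $\kw_S \to \KW_S \simeq \Sigma^4 \KW_S$ factors uniquely through the connective cover $\tau_{\ge 0}(\Sigma^4 \KW_S) = \Sigma^4 \kw_S$, which one then $2$-localizes. Because $\KO$ is base-change-stable and every subsequent operation (truncation, $\eta$-inversion, $2$-localization, formation of fibers) commutes with $f^*$ for $f : D \to S$, Theorem \ref{thm:intro-main} implies that the pullback of this $S$-diagram along $f$ reproduces the analogous diagram over $D$. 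Hence it suffices to prove the fiber-sequence statement over $S$.

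To prove the fiber sequence over $S$, consider the canonical map
\[ \alpha_S : \1_S[\eta^{-1}]_{(2)} \to F_S := \fib\bigl(\kw_{S,(2)} \to \Sigma^4 \kw_{S,(2)}\bigr) \]
induced by a null-homotopy of the composite $\1_S[\eta^{-1}]_{(2)} \to \Sigma^4 \kw_{S,(2)}$. By the base-change compatibility established in the previous paragraph, pulling back $\alpha_S$ along a residue field $k = \Q$ or $\F_p$ (odd $p$) of $S$ gives the analogous map $\alpha_k$, which is an equivalence by \cite{bachmann-eta}. Both the descent of the null-homotopy from the residue fields to $S$ and the fact that $\alpha_S$ is an equivalence then follow from the conservativity of $f^*$ onto the residue fields \cite[Proposition B.3]{bachmann-norms}, using that all three terms are bounded below in the homotopy $t$-structure. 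This establishes the fiber sequence over $S$, and hence over $D$.

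The main obstacle is identifying the self-map $\kw_k \to \Sigma^4 \kw_k$ produced by this recipe with the one appearing in \cite{bachmann-eta}. If the latter is built by the same recipe (Bott periodicity of $\KW$ followed by the connective cover), the identification is tautological; otherwise it amounts to pinning down the relevant class in $[\kw_k, \Sigma^4 \kw_k]_{\SH(k)}$, accessible via $\ul\pi_0(\kw_k) \cong \ul\W_k$ and the low-degree Postnikov invariants of $\kw_k$.
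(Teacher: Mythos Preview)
Your construction of the second map is broken. You assert that the composite $\kw_S \to \KW_S \simeq \Sigma^4 \KW_S$ factors through $\tau_{\ge 0}(\Sigma^4 \KW_S) = \Sigma^4 \kw_S$, but this last identification is false: since $\beta$ gives $\Sigma^4\KW \simeq \KW$, one has $\tau_{\ge 0}(\Sigma^4\KW) \simeq \tau_{\ge 0}(\KW) = \kw$, which is \emph{not} $\Sigma^4\kw$ (the latter lies in $\SH(S)_{\ge 4}$ while the former has $\ul\pi_0 \simeq \ul W \ne 0$). The map your recipe actually produces is essentially the identity on $\kw$, with trivial fiber. More to the point, the map appearing in the fiber sequence of \cite{bachmann-eta} is not built from Bott periodicity at all: it is $\adamsphi$, obtained by factoring $\adamspsi^3 - \id : \kw_{(2)} \to \kw_{(2)}$ through multiplication by $\beta$. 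So your ``main obstacle'' is not a side issue to be cleaned up at the end --- it is the entire content of constructing the map.

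There is a second gap in the treatment of the null-homotopy. Conservativity of pullback to residue fields detects equivalences; it does not allow you to ``descend'' a null-homotopy from the residue fields to $S$. The paper instead constructs both the factorization $\adamsphi$ and the null-homotopy of $\1 \to \kw_{(2)} \xrightarrow{\adamsphi} \Sigma^4\kw_{(2)}$ directly over $D$, by proving the vanishing $[\1,\Sigma^n\kw]_{\SH(D)} = 0$ for $n\in\{3,4\}$ (via the descent spectral sequence, using $\dim D \le 1$ and knowledge of $\ul\pi_*\kw$) together with $\kw/\beta \simeq \HW \in \SH(D)_{\le 0}$ and the $1$-connectivity of $\1[\eta^{-1}] \to \kw$. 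Only \emph{after} the map $\1[\eta^{-1}]_{(2)} \to \fib(\adamsphi)$ has been built over $D$ does the paper invoke \cite[Proposition B.3]{bachmann-norms} to check it is an equivalence by reduction to fields. Your overall strategy --- stability under base change plus checking on residue fields --- is the right one for that final step, but the construction of $\adamsphi$ and of the lift to the fiber requires genuine input over the base and cannot be finessed.
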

Using this, we also extend many of the other results of \cite{bachmann-eta} to Dedekind domains.

\subsection*{Overview}
The main observation allowing us to prove the above results is the following.
Recall that there is an equivalence $\SH(D) \wequi \SH^\fr(D)$, where the right hand side means the category of motivic spectra with framed transfers \cite{EHKSY,hoyois2018localization}.
This supplies us with an auxiliary functor $\sigma^\infty_\fr: \SHSfr(D) \to \SH^\fr(D)$.
The Hopf map $\eta: \Gm \to \1$ already exists in $\SHSfr(D)$ (see \S\ref{subsec:eta}).
This readily implies that we can make sense of the category $\SHSfr(D)[\eta^{-1}]$ of $\eta$-periodic $S^1$-spectra with framed transfers, and that there is an equivalence \[ \SHSfr(D)[\eta^{-1}] \wequi \SH^\fr(D)[\eta^{-1}]. \]
The significance of this is that the left hand side no longer involves $\P^1$-stabilization, and hence is much easier to control.
In the end this allows us to relate all our spectra in the list above to a spectrum $\ko^\fr$ which is known to be stable under base change.
To do so we employ (1) work of Jeremy Jacobson \cite{jacobson2018cohomological} on the Gersten conjecture for Witt rings in mixed characteristic, and (2) work of Markus Spitzweck \cite{spitzweck2012commutative} on stability under base change of $\HZ$.

\subsection*{Organization}
In \S\ref{sec:In} we construct by hand a motivic spectrum $\ul{K}^W$ with the expected homotopy sheaves.
In \S\ref{sec:eta-periodic-framed} we study some truncations in $\SHSfr(D)[\eta^{-1}]$, allowing us among other things to construct a spectrum $\kw$ with the expected homotopy sheaves.
We prove our main theorems in \S\ref{sec:main}.
We first give alternative, more explicit definitions of the spectra in our list and deduce stability under base change.
Then we show that the spectra we constructed satisfy the expected universal properties.
We establish the fundamental fiber sequence of $\eta$-periodic motivic stable homotopy theory as an easy corollary.
Finally in \S\ref{sec:app} we deduce some applications, mostly in parallel with \cite[\S8]{bachmann-eta}.

\subsection*{Notation and terminology}
By a Dedekind scheme we mean a finite disjoint union of spectra of Dedekind domains or fields, that is, a regular noetherian scheme of Krull dimension $\le 1$\NB{e.g. wikipedia, Dedekind domain, (DD4)}.
Given a non-vanishing integer $n$ and a scheme $X$, we write $1/n \in X$ to mean that $n \in \scr O_X(X)^\times$.

We denote by $\Spc(S) \subset \PSh(\Sm_S)$ the $\infty$-category of motivic spaces, that is, the subcategory of motivically local (i.e. $\A^1$-invariant and Nisnevich local) presheaves.
We write $L_\mot$ for the left adjoint of the inclusion, i.e., the motivic localization functor.
For a motivic spectrum $E \in \SH(S)$ we denote by $\ul{\pi}_i(E)_j$ the homotopy sheaves (see e.g. \cite[\S2.4.2]{bachmann-eta}).
Beware that unless the base is a field, these objects are only loosely related to the homotopy $t$-structure.

We denote by $a_\Nis, a_\et,$ and $a_\ret$ respectively the associated sheaves of sets in the Nisnevich, étale and real étale topologies.
We write $L_\Nis$ for the Nisnevich localization of presheaves of spaces or spectra.
Unless specified otherwise, all cohomology is with respect to the Nisnevich topology.

All schemes are assumed quasi-compact and quasi-separated.

We denote by $\Spc$ the $\infty$-category of spaces, and by $\SH$ the $\infty$-category of spectra.

\subsection*{Acknowledgements}
I would like to thank Shane Kelly for help with Lemma \ref{lemm:dvr-pres}.
To the best of my knowledge, the first person suggesting to study $\ko^\fr$ was Marc Hoyois.

\section{The sheaves $\ul{I}^n$} \label{sec:In}
\subsection{}
For a scheme $X$ (with $1/2 \in X$), denote by $\ul{W}$ the Nisnevich sheaf of commutative discrete rings obtained by sheafification from the presheaf of Witt rings \cite[\S I.5]{knebusch-bilinear}.
The canonical map \[ \ul{W} \to a_\et \ul{W} \wequi \ul{\Z/2} \] is the \emph{rank map}, and its kernel is the ideal sheaf $\ul{I} \subset \ul{W}$.
We write $\ul{I}^*$ for the sheaf of commutative graded rings given by the powers of $\ul{I}$.
Somewhat anachronistically we put \[ \ul{k}_n^M = a_\Nis H^n_\et(\ph, \Z/2); \] this is also a sheaf of commutative graded rings.
Note that since $1/2 \in X$ we have an exact sequence of étale sheaves $0 \to \Z/2 \to \scr O^\times \to \scr O^\times \to 1$, yielding a boundary map $\scr O^\times(X) \to H^1_\et(X, \Z/2)$ which we denote by $a \mapsto (a)$.
The following results justifies our notation $\ul{k}_*^M$ to an extent.

\begin{theorem}[Jacobson \cite{jacobson2018cohomological}] \label{thm:jac1}
Assume that $1/2 \in X$.
Then there is a unique map of sheaves (of rings) $\ul{I}^* \to \ul{k}_*^M$ given in degree zero by the rank and in degree one locally by $(\lra{a} - 1) \mapsto (a)$.
This map annihilates $\ul{I}^{*+1} \subset \ul{I}^*$ and induces an isomorphism of sheaves \[ \ul{I}^*/\ul{I}^{*+1} \wequi \ul{k}_*^M. \]
\end{theorem}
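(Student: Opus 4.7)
The plan is to reduce the entire theorem to the Nisnevich stalks and apply Jacobson's local-ring analogue of the Milnor conjecture. Since $\ul W$, $\ul I$, and $\ul k_*^M$ are all Nisnevich sheaves, it suffices to produce, uniquely and functorially, a graded ring map $\phi_R \colon I^*(R) \to k_*^M(R)$ for each Nisnevich stalk $R$ --- that is, each henselian local ring containing $1/2$ --- with the stated behaviour in degrees $0$ and $1$, and then to verify both that it annihilates $I^{*+1}$ and that it induces an isomorphism on the associated graded.

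For the construction I would use the classical fact that on a local ring $R$ with $1/2 \in R$ the power $I^n(R)$ is additively generated by $n$-fold Pfister forms $\prod_{i=1}^n(\lra{a_i} - 1)$ with $a_i \in R^\times$, and attempt to send such a generator to the symbol $(a_1)\cdots(a_n) \in k_n^M(R)$. The real work is to verify well-definedness, multiplicativity, and vanishing on $I^{n+1}$; this amounts to matching the standard relations among Pfister forms in $W(R)$ with the corresponding relations among symbols in Milnor $K$-theory mod $2$. For fields this is Milnor's original construction; for local rings containing $1/2$ it is carried out by Kato and Jacobson. Uniqueness then follows at once, because the map in degree $1$ determines the map on the generators of $\ul I^*$ and compatibility with multiplication forces the formula in all higher degrees.

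It remains to verify that the induced map $\bar\phi_n \colon \ul I^n/\ul I^{n+1} \to \ul k_n^M$ is an isomorphism, which I would again check on stalks. Surjectivity is straightforward: for $R$ a henselian local ring containing $1/2$ the group $k_n^M(R) = a_\Nis H^n_\et(R, \Z/2)$ is generated by cup products of Kummer classes $(a_i)$, and these are hit by the images of Pfister forms. Injectivity is the substantive input and the main obstacle: it amounts to a sheafified Milnor conjecture for local rings containing $1/2$. Over fields this is classical (Orlov--Vishik--Voevodsky); the extension to the delicate case of mixed-characteristic DVR stalks is precisely the main theorem of \cite{jacobson2018cohomological}. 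Jacobson's argument proceeds by establishing a Gersten-type resolution of $\ul I^n$ on essentially smooth schemes over such a DVR, thereby reducing the statement to the already known case of the residue and generic fields. This is the step that imposes the standing Dedekind hypothesis in the rest of the paper.
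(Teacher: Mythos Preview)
Your reduction to Nisnevich stalks and the construction of the map via Pfister generators are fine; the paper itself simply cites Jacobson for both existence and the isomorphism, so on that level there is nothing to compare. The problem is your final paragraph.

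You assert that injectivity of $\ul I^n/\ul I^{n+1} \to \ul k_n^M$ requires a Gersten-type resolution of $\ul I^n$ on schemes essentially smooth over a DVR, and that this is what forces the Dedekind hypothesis elsewhere in the paper. Both claims are off. The theorem as stated applies to an \emph{arbitrary} scheme $X$ with $1/2 \in X$; no smoothness or Dedekind condition is imposed. The argument underlying Jacobson's Theorem~4.4 (via his Lemma~4.1, which this paper also invokes in the proof of Proposition~\ref{prop:MSL-MSp-kw}) is a rigidity step, not a Gersten resolution: for a henselian local ring $R$ with residue field $k$ and $1/2 \in R$, the restriction $W(R) \to W(k)$ is an isomorphism (Hensel lifting of isotropic vectors together with $R^\times/(R^\times)^2 \simeq k^\times/(k^\times)^2$), hence so is each $I^n(R)\to I^n(k)$; and $H^n_\et(R,\Z/2) \to H^n_\et(k,\Z/2)$ is an isomorphism by proper base change. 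Thus $I^n(R)/I^{n+1}(R) \to H^n_\et(R,\Z/2)$ is identified with the same map over the field $k$, and one is reduced directly to Orlov--Vishik--Voevodsky. This also disposes of the surjectivity you called ``straightforward'' without having to argue separately that $H^n_\et(R,\Z/2)$ is symbol-generated.

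The Dedekind hypothesis in the paper does not originate here. It first enters in Corollary~\ref{cor:cons-K-W}, where one needs $L_\Nis \ul I^n$ to be $\A^1$-invariant and to satisfy the correct $\Omega_{\Gm}$-behaviour; that is where the Gersten-flavoured input (via Spitzweck's $\HZ/2$ and the divided signature of Proposition~\ref{prop:jac2}) is actually used.
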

\begin{proof}
For existence, see \cite[Remark 4.5]{jacobson2018cohomological}.
The rest is \cite[Theorem 4.4]{jacobson2018cohomological}.\NB{actually reduce via henselization to fields; see \cite[Lemma 4.1]{jacobson2018cohomological}}
\end{proof}

Still assuming that $1/2 \in X$, the canonical map \[ \sigma: \ul{W} \to a_\ret \ul{W} \wequi a_\ret \Z \] is the \emph{global signature}.
One may show that $\sigma(\ul{I}) \subset 2a_\ret \Z$ (indeed locally $\I(\ph)$ consists of diagonal forms of even rank \cite[Corollary I.3.4]{milnor1973symmetric}, and the signature is thus a sum of an even number of terms $\pm 1$) and hence $\sigma(\ul{I}^n) \subset 2^n a_\ret \Z$.
Since $a_\ret \Z$ is torsion-free, there are thus induced maps \[ \sigma/2^n: \ul{I}^n \to a_\ret \Z. \]

For a scheme $X$, denote by $K(X)$ the product of the residue fields of its minimal points.
Recall that for a field $k$, $\vcd_p(k)$ denotes the minimum of $\cd_p(l)$ for $l/k$ a finite extension, and $\cd_p(k)$ denotes the $p$-étale cohomological dimension (see e.g. \cite[\S I.3.1]{serre2013galois}).
\begin{lemma} \label{lemm:vcd-bound} \NB{this seems a little bit fishy, but I checked it several times}
Let $X$ be a noetherian scheme with $1/p \in X$.
Then $\vcd_p(X) \le \dim{X} + \vcd_p(K(X))$.
\end{lemma}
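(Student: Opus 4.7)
The plan is to induct on $d = \dim X$. The base case $d = 0$ is immediate: $X_\red = \Spec K(X)$ and étale cohomology is insensitive to nilpotents, so $\vcd_p(X) = \vcd_p(K(X))$. For the inductive step, I would first perform the standard reductions (nilpotent insensitivity, and Mayer--Vietoris on irreducible components whose pairwise intersections have dimension $< d$) to assume $X$ is integral with generic point $\eta$ and $K = K(X) = \kappa(\eta)$.

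The core step is to pick a non-empty regular open subscheme $U \subseteq X$ (which exists since $\mathcal{O}_{X, \eta} = K$ is regular), set $Z = X \setminus U$ with $\dim Z \leq d - 1$, and use the localization long exact sequence
\[
\cdots \to H^i_Z(X, F) \to H^i(X, F) \to H^i(U, F) \to \cdots
\]
for a $p$-torsion étale sheaf $F$ to reduce to bounding $H^i_Z(X, F)$ and $\cd_p(U)$ separately. The former is handled by the inductive hypothesis on $Z$ together with Gabber's absolute purity (identifying $H^i_Z(X, F)$ stratum-by-stratum with shifted étale cohomology of $Z$). For the latter, with $U$ regular, the Bloch--Ogus spectral sequence
\[
E_2^{p, q} = H^p_\Zar(U, \mathcal{H}^q_\et F) \Rightarrow H^{p+q}_\et(U, F)
\]
lets me reduce to bounding the stalks $(\mathcal{H}^q_\et F)_x$ at a codimension-$c$ point $x$. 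These stalks are $H^q_\et(\Spec \mathcal{O}_{U, x}, F)$, and since $\Spec \mathcal{O}_{U, x}$ is a limit of finite-type $c$-dimensional schemes with fraction field $K$, the inductive hypothesis gives vanishing for $q > c + \vcd_p(K)$. A codimension-by-dimension accounting then yields the desired bound $n \leq d + \vcd_p(K)$ on the abutment degree.

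The main obstacle is the bookkeeping needed to relate $\vcd_p$ of local rings at codimension-$c$ points of $X$ to $\vcd_p(K)$. The key ingredients I would use are: (i) for a Henselian regular local ring $A$ of dimension $c$ with $1/p \in A$, iterated use of the Hochschild--Serre spectral sequence for tame inertia gives $\cd_p(\mathrm{Frac}\, A) \leq c + \cd_p(\text{residue field})$; and (ii) the fraction field of the Henselization $\mathcal{O}_{X, x}^h$ is algebraic over $K$, hence has $\vcd_p \leq \vcd_p(K)$. Given the author's own cautious remark that the lemma ``seems a little bit fishy'', I expect that making this bookkeeping rigorous --- especially the passage between $\Spec \mathcal{O}_{X, x}$ and its Henselization, and the interplay with the virtual versus honest cohomological dimension at each codimension --- is the technically most delicate part of the argument.
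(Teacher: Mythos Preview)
Your plan differs substantially from the paper's proof, which is a four-line citation argument. After replacing $X$ by $X[\sqrt{-1}]$ when $p=2$ (so that $\vcd_p = \cd_p$ for all residue fields in sight), the paper invokes the Gabber bound
\[
\cd_p(X) \le \dim X + \sup_{x \in X}\bigl(\dim \scr O_{X,x} + \cd_p(k(x))\bigr)
\]
from the \emph{Travaux de Gabber} volume (Exp.~XVIII-A, Lemma 2.2), and then the inequality $\dim \scr O_{X,x} + \cd_p(k(x)) \le \cd_p\bigl(K(\scr O_{X,x})\bigr) \le \cd_p(K(X))$ from SGA4~X.2.4 to conclude.

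Your direct inductive argument, by contrast, has two genuine gaps. First, Gabber's absolute purity requires the ambient scheme to be regular along $Z$, but you have chosen $Z = X \setminus U$ precisely to contain the singular locus of $X$; purity therefore says nothing about $H^i_Z(X, F)$ here, and without it the localization sequence gives no usable bound on that term. Second, the Bloch--Ogus/Leray side is circular at closed points: the Zariski stalk of $\mathcal H^q_\et F$ at a closed point $x \in U$ is $H^q_\et(\Spec \scr O_{U,x}, F)$ with $\dim \scr O_{U,x} = d$, so the inductive hypothesis (for dimension $< d$) does not apply. Your description of $\Spec \scr O_{U,x}$ as a limit of $c$-dimensional schemes is also off --- it is a cofiltered limit of $d$-dimensional open neighborhoods of $x$. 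Concretely, for $q > d + \vcd_p(K)$ the sheaf $\mathcal H^q_\et F$ could be a nonzero skyscraper supported at closed points, which has nonzero $H^0_\Zar$, so the spectral sequence does not vanish in the range you need. The only way I see to close this loop is to feed in the field-theoretic inequality $\cd_p(\kappa(x)) + \mathrm{codim}(x) \le \cd_p(K)$ of SGA4~X.2.4 directly --- but that is exactly the key input in the paper's argument, and once you have it the localization-sequence scaffolding (with its purity problem) is no longer needed.
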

\begin{proof}
If $p=2$, replace $X$ by $X[\sqrt{-1}]$.
We may thus assume that $\vcd_p = \cd_p$, and if $p=2$ that all residue fields of $X$ are unorderable.
By \cite[Lemma XVIII-A.2.2]{illusie2014travaux} we have $\cd_p(X) \le \dim{X} + \sup_{x \in X}(\dim \scr O_{X,x} + \cd_p(k(x)))$.
It hence suffices to show that $\cd_p(k(x)) + \dim \scr O_{X,x} \le \cd_p(K(X))$.
Since $\cd_p(K(X)) \ge \cd_p(K(\scr O_{X,x}))$, this follows from \cite[Corollary X.2.4]{sga4}.
\end{proof}

\begin{proposition}[Jacobson] \label{prop:jac2}
Assume that $X$ is noetherian and $1/2 \in X$.
Then for $n > \vcd_2(K(X))+\dim{X}$\NB{$n > \vcd_2(K(X))$ should be enough} the divided signature \[ \sigma/2^n: \ul{I}^n \to a_\ret \Z \] is an isomorphism of sheaves on $X_\Nis$.
\end{proposition}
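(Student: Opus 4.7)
The plan is to reduce to the case of a field on Nisnevich stalks and then invoke the classical Arason--Elman--Jacob theorem together with Theorem~\ref{thm:jac1}. First, it suffices to check that for every $x \in X$ the induced map on Nisnevich stalks
\[ \ul{I}^n\bigl(\scr O_{X,x}^h\bigr) \to (a_\ret \Z)\bigl(\scr O_{X,x}^h\bigr) \]
is an isomorphism. For the source, Hensel's lemma yields $W(\scr O_{X,x}^h) \cong W(k(x))$ with matching fundamental ideals (this is classical and is also used in Jacobson's paper via Lemma~4.1 there), so the stalk is $I^n(k(x))$. For the target, orderings of a henselian local ring correspond bijectively, with matching constructible topology, to orderings of its residue field, so $(a_\ret \Z)(\scr O_{X,x}^h) = C(\Sper k(x), \Z)$.

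Next I would bound the virtual $2$-cohomological dimension of residue fields of $X$. Applying \cite[Corollary X.2.4]{sga4} to the local ring $\scr O_{X,x}$, whose minimal points sit inside $K(X)$, one obtains $\cd_2(k(x)) + \dim \scr O_{X,x} \le \cd_2(K(\scr O_{X,x})) \le \cd_2(K(X))$ after the usual reduction $X \leadsto X[\sqrt{-1}]$ used in Lemma~\ref{lemm:vcd-bound}. In particular $\vcd_2(k(x)) \le \vcd_2(K(X)) \le \vcd_2(K(X)) + \dim X < n$, so each stalk involves a residue field of virtual $2$-cohomological dimension strictly less than $n$.

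It thus remains to show that for any field $k$ with $1/2 \in k$ and $\vcd_2(k) < n$, the divided signature $\sigma/2^n : I^n(k) \to C(\Sper k, \Z)$ is an isomorphism. This is the classical theorem of Arason--Elman--Jacob; the key inputs are the vanishing of $\ul{k}^M_m(k)/2$ for $m > \vcd_2(k)$ (after an odd-degree extension killing orderings, to which the statement is indifferent), combined with the identification $I^m/I^{m+1} \cong \ul{k}^M_m$ of Theorem~\ref{thm:jac1}, and the Pfister-form computation of the kernel and image of the signature. In practice I would simply cite the relevant statement from Jacobson's paper.

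The main obstacle I expect is the stalk-wise identification of $a_\ret \Z$, since this sheaf is defined by sheafification in a topology strictly finer than the Nisnevich one; once one checks that orderings lift uniquely and functorially from residue fields to Nisnevich-local rings with $1/2$, the proposition reduces to the classical field-theoretic statement.
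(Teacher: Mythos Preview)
Your reduction to residue fields is appealing, but step 3 contains a genuine error. The claim that ``orderings of a henselian local ring correspond bijectively \dots\ to orderings of its residue field'' is false: for $A=\R[[t]]$ one has $|\Sper(A)|=3$ (two orderings of $\R((t))$ and one of $\R$) while $|\Sper(\R)|=1$. What you actually need is the weaker statement that the restriction map $C(\Sper(A),\Z)\to C(\Sper(k),\Z)$ is an isomorphism. This is true, but requires a real argument: using that $1+\mathfrak m\subset (A^\times)^2$ (Hensel, $2$ invertible) one shows that every $\alpha\in\Sper(A)$ specializes to a point of $\Sper(k)$, so $\Sper(k)$ is exactly the set of closed points of $\Sper(A)$; since locally constant functions are constant along specialization chains, and since for $b\in A^\times$ the sign of $b$ at $\alpha$ agrees with the sign of $\bar b$ at the closed specialization, one checks that restriction and extension along the retraction are mutually inverse on clopens. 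You correctly flag this step as the main obstacle, but your proposed resolution (``orderings lift uniquely'') is not the right one.

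For comparison, the paper avoids this issue entirely by \emph{not} reducing to the residue field. It works directly with the Hensel local ring $A$: Lemma~\ref{lemm:vcd-bound} together with \cite[Theorem X.2.1]{sga4} bounds $\vcd_2(A)$ itself by $N=\vcd_2(K(X))+\dim X$, and then three results of Jacobson for henselian noetherian local rings (\cite[Lemma 6.2(III), Corollary 4.8, Proposition 7.1]{jacobson2018cohomological}) give respectively $\bigcap_n I^n(A)=0$, that $I^n(A)\xrightarrow{\cdot 2} I^{n+1}(A)$ is an isomorphism for $n>N$, and that $\colim_n I^n(A)\simeq (a_\ret\Z)(A)$. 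Combining these yields $\sigma/2^n:I^n(A)\xrightarrow{\sim}(a_\ret\Z)(A)$ directly, with no need to compare $\Sper(A)$ and $\Sper(k)$. Your route, once repaired, is a bit more elementary in that it only needs the field case of Jacobson's theorem together with Witt rigidity and the $\Sper$-argument above; the paper's route instead leans on Jacobson's extension of those results to henselian local rings.
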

\begin{proof}
Let $N = \vcd_2(K(X)) + \dim{X}$.
Note that for any Hensel local ring $A$ of $X$ we have $\vcd_2(A) \le N$, by Lemma \ref{lemm:vcd-bound} and \cite[Theorem X.2.1]{sga4}.
Since $A$ is henselian and noetherian, by \cite[Lemma 6.2(III)]{jacobson2018cohomological} we have $\cap_n I^n(A) = 0$.
Hence by \cite[Corollary 4.8]{jacobson2018cohomological} for $n>N$ the map $I^n(A) \xrightarrow{2} I^{n+1}(A)$ is an isomorphism.
By \cite[Proposition 7.1]{jacobson-fundamental-ideal}, the divided signatures induce an isomorphism $\colim_n I^n(A) \wequi (a_\ret \Z)(A)$.
These two results imply that $\sigma/2^n(A): I^n(A) \to (a_\ret \Z)(A)$ is an isomorphism, for any $n>N$.
Since $A$ was arbitrary, the map $\sigma/2^n: \ul{I}^n \to a_\ret \Z$ induces an isomorphism on stalks, and hence is an isomorphism.
\end{proof}

Combining Theorem \ref{thm:jac1} and Proposition \ref{prop:jac2}, we will be able to control the sheaves $\ul I^n$ by controlling $a_\ret \Z$ and $\ul k_*^M$.

\subsection{}
Fix a Dedekind scheme $D$.
A $\Gm$-prespectrum $E$ over $D$ means a sequence of objects $(E_1, E_2, \dots)$ with $E_i \in \Fun(\Sm_D^\op, \SH)$, together with maps $E_i \to \Omega_\Gm E_{i+1}$.
Such a prespectrum can in particular be exhibited by defining $E_i$ as a presheaf of abelian groups.
See e.g. \cite[\S6]{cisinski2009local} for details as well as symmetric (monoidal) variants.
A $\Gm$-prespectrum $E$ is called a \emph{motivic spectrum} if each $E_i$ is motivically local, and the structure maps $E_i \to \Omega_\Gm E_{i+1}$ are equivalences.

\begin{example}[Spitzweck \cite{spitzweck2012commutative}]
There is a $\Gm$-prespectrum $\HZ/2$ with \[ (\HZ/2)_i = \Sigma^i \tau_{\ge -i}^\Nis L_\et \Z/2. \]
In particular \[ \ul{\pi}_*(\HZ/2)_* \wequi \ul{k}_*^M[\tau], \] where $\tau \in \ul\pi_1(\HZ/2)_1(D)$ is the unique non-vanishing element.
The prespectrum $\HZ/2$ is in fact a motivic spectrum.
\end{example}
\begin{example} \label{ex:build-spectrum-from-ring}
Let $R_*$ be a Nisnevich sheaf of commutative graded (discrete) rings, and $t \in R_1(\A^1 \setminus 0)$.
Then $R_*$ defines a commutative monoid $\tilde R_*$ in symmetric sequences (of Nisnevich sheaves) with trivial symmetric group actions\NB{$\Fin \to \N$ is symmetric monoidal, restrict to cores and form lax symmetric monoidal right adjoint on presheaf categories}, and $t$ defines a class $[t]$ in the summand $R_1(\Gm)$, making $\tilde R_*$ into a commutative monoid under the free commutative monoid on $\Gm$.
In other words, $\tilde R_*$ is a commutative monoid in symmetric $\Gm$-prespectra \cite[second half of \S6.6]{cisinski2009local}.
This construction is functorial in $R_*$.
\end{example}

\begin{definition}
Applying Example \ref{ex:build-spectrum-from-ring} to the sheaf of graded rings $\ul{I}^*$ and the class $\lra{t}-1 \in I(D \times \Gm)$, we obtain a $\Gm$-prespectrum $\ul{K}^W$ over $D$ with $\ul{K}^W_n = \ul{I}^n$.
Similarly we obtain a $\Gm$-prespectrum $\ul{k}^M$, and in fact a morphism of commutative monoids in symmetric $\Gm$-prespectra $\ul{K}^W \to \ul{k}^M$ (coming from the ring map $\ul{I}^* \to \ul{k}_*^M$ of Theorem \ref{thm:jac1}).
\end{definition}
From now on we view the category of Nisnevich sheaves of abelian groups as embedded into Nisnevich sheaves of spectra, and view all sheaves of abelian groups as sheaves of spectra, so that for $X \in \Sm_D$ we have \[ (\ul{K}^W)_i(X) = L_\Nis \ul{I}^i, \] and similarly for $\ul{k}^M$.

\begin{lemma} \label{lemm:kM-mot-local}
There is a commutative ring map $\HZ/2 \to \ul{k}^M$ inducing an equivalence of $\Gm$-prespectra $\ul{k}^M \wequi (\HZ/2)/\tau$.
In particular $\ul{k}^M$ is a motivic spectrum.
\end{lemma}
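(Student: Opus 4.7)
The plan is to construct $\alpha\colon \HZ/2 \to \ul{k}^M$ level-wise via the bottom Postnikov truncation, verify it is a ring map of $\Gm$-prespectra, and compute that its fiber is $\Sigma^{1,1}\HZ/2$ with connecting map equal to multiplication by $\tau$.

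At each level $i$, the canonical truncation $\tau^\Nis_{\ge -i} L_\et\Z/2 \twoheadrightarrow \ul{\pi}^\Nis_{-i}(L_\et\Z/2)[-i]$ onto the bottom Nisnevich homotopy sheaf, suspended by $\Sigma^i$, yields
\[
\alpha_i \colon (\HZ/2)_i = \Sigma^i \tau^\Nis_{\ge -i} L_\et\Z/2 \longrightarrow \ul{k}_i^M = (\ul{k}^M)_i.
\]
Both $\HZ/2$ and $\ul{k}^M$ realize their $\Gm$-structure maps as multiplication by the same tautological Bott class $(t) \in H^1_\et(\Gm,\Z/2) = \ul{k}_1^M(\Gm)$ (for $\HZ/2$ this is part of Spitzweck's construction; for $\ul{k}^M$ it is by construction in Example \ref{ex:build-spectrum-from-ring}). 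Since $\alpha_1$ sends one class to the other, $\alpha$ is compatible with the structure maps. The ring structures on both sides are induced from the one on the ring spectrum $L_\et\Z/2$, and $\alpha$ is level-wise the bottom graded piece of its Nisnevich Postnikov filtration; hence $\alpha$ is a morphism of commutative monoids.

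At each level the fiber of $\alpha_i$ is the next Postnikov section, namely $\Sigma^i \tau^\Nis_{\ge -i+1} L_\et\Z/2 \simeq \Sigma(\HZ/2)_{i-1} = (\Sigma^{1,1}\HZ/2)_i$, with fiber inclusion induced by $\tau^\Nis_{\ge -i+1} \hookrightarrow \tau^\Nis_{\ge -i}$. These assemble into an $\HZ/2$-linear map $\iota\colon \Sigma^{1,1}\HZ/2 \to \HZ/2$. I identify $\iota$ with multiplication by $\tau$ as follows: both are $\HZ/2$-module maps, and both induce the identity $\Z/2 \to \Z/2$ on $\ul{\pi}_1(-)_1$ at level $1$; since $\HZ/2$-module maps out of the free rank-one module $\Sigma^{1,1}\HZ/2$ are determined by their value on the tautological generator, the two coincide. (Equivalently, $\tau$ is characterized as the unique nonvanishing element of $\ul{\pi}_1(\HZ/2)_1(D) = \Z/2$.) We obtain the cofiber sequence of $\Gm$-prespectra
\[
\Sigma^{1,1}\HZ/2 \xrightarrow{\,\tau\,} \HZ/2 \xrightarrow{\,\alpha\,} \ul{k}^M,
\]
hence $\ul{k}^M \wequi \HZ/2/\tau$. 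The final clause is then immediate: motivic spectra are closed under cofibers in $\SH(D)$, so $\HZ/2/\tau$ is a motivic spectrum, whence so is $\ul{k}^M$.

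The main obstacle is the structural compatibility in paragraph two, i.e., verifying that the level-wise truncations assemble into a morphism of commutative ring $\Gm$-prespectra. This ultimately reduces to identifying the common Bott class in $H^1_\et(\Gm,\Z/2)$ underlying both constructions and invoking lax monoidality of Nisnevich Postnikov truncation; the subsequent identification of the connecting map with $\tau$ is then forced by $\HZ/2$-linearity and the uniqueness of $\tau$.
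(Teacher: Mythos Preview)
Your argument is correct and follows essentially the same idea as the paper: both recognize the map $\HZ/2 \to \ul{k}^M$ as the levelwise Nisnevich zero-truncation of Spitzweck's model and use this to identify $\ul{k}^M$ with $(\HZ/2)/\tau$. The paper runs the logic in the opposite direction --- it starts from the map $\HZ/2 \to (\HZ/2)/\tau$ already living in $\SH(D)$, observes that on Spitzweck's explicit model it is levelwise the zero-truncation, and reads off that the target is $\ul{k}^M$; this sidesteps the need to separately identify the fiber and connecting map (and the attendant check that $\mathrm{fib}(\alpha)\simeq\Sigma^{1,1}\HZ/2$ as an $\HZ/2$-module), and reduces the ring-structure comparison to a $1$-categorical inspection of commutative monoids in symmetric $\Gm$-prespectra of abelian sheaves.
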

\begin{proof}
Let $E = (E_1, E_2, \dots)$ be a $\Gm$-prespectrum in Nisnevich sheaves of spectra.
If each $E_i$ is connective, we can form a prespectrum $\tau_{\le 0}^\Nis(E)$ with $\tau_{\le 0}^\Nis(E)_i \wequi \tau_{\le 0}^\Nis(E_i)$ the truncation in the usual $t$-structure, and bonding maps\NB{Can also do it when $E_i$ is not connective, since $\Omega_{\Gm}$ is exact.} \[ \Gm \wedge \tau_{\le 0}^\Nis(E_i) \to \tau_{\le 0}^\Nis(\Gm \wedge \tau_{\le 0}^\Nis(E_i)) \wequi \tau_{\le 0}^\Nis(\Gm \wedge E_i) \to \tau_{\le 0}^\Nis(E_{i+1}). \]
Even if $E$ is a motivic spectrum $\tau_{\le 0}^\Nis(E)$ need not be; however if it is then it represents the truncation $\tau_{\le 0}(E) \in \SH(D)$ in the homotopy $t$-structure.

In \cite[\S4.1.1]{spitzweck2012commutative} there is a construction of a specific $\Gm$-prespectrum $H$ such that (1) $H$ is a motivic spectrum representing $\HZ/2$ and (2) $\tau_{\le 0}^\Nis(H) \wequi \ul{k}^M$, the equivalence being as $\Gm$-prespectra.
The map $\HZ/2 \to (\HZ/2)/\tau \in \SH(D)$ corresponds to a map $H \to H'$ of $\Gm$-spectra which is immediately seen to be a levelwise zero-truncation.
It follows that $H' \wequi \tau_{\le 0}^\Nis(H) \wequi \ul{k}^M$ as $\Gm$-prespectra.
In particular $\tau_{\le 0}^\Nis(H) \wequi \ul{k}^M$ are motivic spectra, and in fact $\ul{k}^M \wequi \tau_{\le 0}(\HZ/2) \in \SH(D)$.
Since $\SH(D)_{\ge 0}$ is closed under smash products, truncation in the homotopy $t$-structure is lax symmetric monoidal on $\SH(D)_{\ge 0}$ and so $\tau_{\le 0}(\HZ/2)$ admits a canonical ring structure making $\HZ/2 \to (\HZ/2)_{\le 0}$ into a commutative ring map.
It remains to show that $\tau_{\le 0}(\HZ/2) \wequi \ul{k}^M$ is an equivalence of ring spectra.
Both of them can be modeled by $\scr E_\infty$-monoids in the ordinary $1$-category of symmetric $\Gm$-prespectra of sheaves of abelian groups on $\Sm_D$; i.e. just commutative monoids in the usual sense.
The isomorphism between them preserves the product structure by inspection.
\end{proof}

The following is the main result of this section.
\begin{corollary} \label{cor:cons-K-W}
Let $D$ be a Dedekind scheme with $1/2 \in D$.
The $\Gm$-prespectrum $\ul{K}^W$ is a motivic spectrum over $D$.
\end{corollary}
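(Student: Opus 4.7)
The plan is to check the two defining conditions of a motivic spectrum — motivic locality of each level $L_\Nis \ul{I}^n$ and the bonding condition $L_\Nis \ul I^n \wequi \Omega_\Gm L_\Nis \ul I^{n+1}$ — by leveraging Theorem \ref{thm:jac1} and Proposition \ref{prop:jac2} to reduce to the analogous statements for two better-understood ingredients: the motivic spectrum $\ul k^M$ of Lemma \ref{lemm:kM-mot-local}, and the real étale constant sheaf $a_\ret \Z$.

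First I would verify that $L_\Nis a_\ret \Z$ is strictly $\A^1$-invariant on $\Sm_D$, and that the endomorphism built from $\sigma/2(\lra{t}-1) \in a_\ret \Z(\Gm)$ provides an equivalence $L_\Nis a_\ret \Z \wequi \Omega_\Gm L_\Nis a_\ret \Z$. Strict $\A^1$-invariance comes from the fact that real étale cohomology may be computed on the real spectrum, which is $\A^1$-invariant on regular schemes with $1/2$. The $\Gm$-loop calculation reduces, after passing to real closed residue fields, to the observation that $\Gm$ over such a field has two real connected components (the positive and negative halves), so that the reduced part of $a_\ret \Z(\Gm)$ is a copy of $\Z$ on which the class in question restricts to a generator.

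Second, Proposition \ref{prop:jac2} then immediately propagates the conclusion to $L_\Nis \ul I^n$ for all $n > N := \vcd_2(K(D)) + \dim D$: in this range the divided signatures identify $L_\Nis \ul I^n$ with $L_\Nis a_\ret \Z$ compatibly with the bonding maps. I would then perform downward induction on $n$ to cover the remaining levels $0 \le n \le N$. The inductive step uses the short exact sequence of Nisnevich sheaves
\[ 0 \to \ul I^{n+1} \to \ul I^n \to \ul k_n^M \to 0 \]
from Theorem \ref{thm:jac1}, which Nisnevich-sheafifies to a fiber sequence of sheaves of spectra. By induction $L_\Nis \ul I^{n+1}$ is motivically local and by Lemma \ref{lemm:kM-mot-local} so is $L_\Nis \ul k_n^M$; hence so is $L_\Nis \ul I^n$. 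The filtration by $\ul I^*$ is compatible with multiplication by $\lra{t}-1$, since the latter preserves the ideal powers: modulo $\ul I^{n+1}$ it reduces to multiplication by $(t) \in \ul k_1^M(\Gm)$ (by Theorem \ref{thm:jac1}), which is the bonding map of the motivic spectrum $\ul k^M$ and therefore invertible; on $\ul I^{n+1}$ itself it is the bonding map one level up, an equivalence by induction. A five-lemma/fiber-sequence comparison then yields invertibility of the bonding map at level $n$.

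The main obstacle I anticipate is the first step: a clean verification of strict $\A^1$-invariance for $a_\ret \Z$ over a Dedekind base and an explicit identification of $\Omega_\Gm$ on it. Both are standard consequences of the $\A^1$-invariance of $\Sper$ and the product decomposition of $\Gm$ over real closed fields, but they need to be put together carefully to match the combinatorics of the class $\sigma/2(\lra{t}-1)$. Once this input is in place, the rest is a formal assembly via the filtration by powers of $\ul I$.
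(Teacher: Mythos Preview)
Your strategy---reduce via the fiber sequence coming from $0 \to \ul I^{n+1} \to \ul I^n \to \ul k_n^M \to 0$ (Theorem \ref{thm:jac1}) to the known cases of $\ul k^M$ (Lemma \ref{lemm:kM-mot-local}) and $a_\ret\Z$ (Proposition \ref{prop:jac2})---is exactly the paper's. Incidentally, the paper does not verify the $a_\ret\Z$ step by hand as you propose; it simply cites the existence of a motivic spectrum $H_{\A^1}\Z[\rho^{-1}]$ whose levels are all $L_\Nis a_\ret\Z$, which sidesteps the combinatorics you flag as the main obstacle.

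There is, however, a genuine gap. Your downward induction needs a finite starting point $N = \vcd_2(K(D)) + \dim D$, and nothing guarantees this: already for $D = \Spec(k)$ with $k$ a field of infinite virtual $2$-cohomological dimension (e.g.\ $k = \C(t_1,t_2,\dots)$), Proposition \ref{prop:jac2} never applies and the induction never begins. The paper fixes this with two preliminary reductions you have omitted. First, one Zariski-localizes at each point of $D$, using that the family of pullbacks $p_x^*$ to Nisnevich sheaves of spectra over the local schemes is conservative and commutes with $\Omega_\Gm$, $\Omega_{\A^1_+}$, and with $\ul I^n$. Second, one writes the resulting noetherian valuation ring as a filtered colimit $\colim_\alpha D_\alpha$ of noetherian valuation rings with $\vcd_2(K(D_\alpha)) < \infty$ (Lemma \ref{lemm:dvr-pres}) and invokes continuity of Nisnevich cohomology of $\ul I^n$ along this system. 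Only after these two steps is $N$ finite and your argument applicable.
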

\begin{proof}
Since $(\ul{K}^W)_n = L_\Nis \ul{I}^n$ is Nisnevich local by construction, to prove it is motivically local we need to establish $\A^1$-homotopy invariance, i.e. that $\Omega_{\A^1_+} \ul{K}^W_n \wequi \ul{K}^W_n$.
Similarly to prove that we have a spectrum we need to show that $\Omega_{\Gm}\ul{K}^W_{n+1} \wequi \ul{K}^W_n$.
Here we are working in the category $\Shv^\Nis_\SH(\Sm_D)$ of Nisnevich sheaves of spectra on $\Sm_D$.
For $x \in D$, denote by $p_x: D_x \to D$ the inclusion of the local scheme.
By \cite[Lemmas A.3 and A.4]{hoyois-algebraic-cobordism}, the functor $p_x^*$ commutes with $\Omega_{\Gm}$ and $\Omega_{\A^1_+}$, and by \cite[Proposition A.3(1,3)]{bachmann-norms} the family of functors $p_x^*: \Shv^\Nis_\SH(\Sm_D) \to \Shv^\Nis_\SH(\Sm_{D_x})$ is conservative.
Finally by \cite[Corollary 51]{bachmann-gwtimes} we have $p_x^* \ul{I}^n \wequi \ul{I}^n$.
It follows that we may assume (replacing $D$ by $D_x$) that $D$ is the spectrum of a discrete valuation ring or field.

By Lemma \ref{lemm:dvr-pres} below, we have $D \wequi \lim_\alpha D_\alpha$, where each $D_\alpha$ is the spectrum of a discrete valuation ring or field and $\vcd_2(K(D_\alpha)) < \infty$.
By \cite[Theorem 8.8.2(ii), Proposition 17.7.8(ii)]{EGAIV} for $X \in \Sm_D$ there exists (possibly after shrinking the indexing system) a presentation $X \wequi \lim_\alpha X_\alpha$, with $X_\alpha \in \Sm_{D_\alpha}$ and the transition maps being affine.
\todo{more explicit references for the following?}
We have a fibered topos \cite[\S Vbis.7]{sga4} $X_{\bullet\Nis}$ with $\lim X_{\bullet\Nis} \wequi X_\Nis$.
The sheaves $(\ul{I}^n|_{X_{\alpha\Nis}})_\alpha$ define a section of the fibered topos $X_{\bullet\Nis}$.
It follows from \cite[Lemma 49]{bachmann-gwtimes} and \cite[Proposition Vbis.8.5.2]{sga4} that (in the notation of the latter reference) $Q^*(\ul{I}^n|_{X_{\bullet\Nis}}) \wequi \ul{I}^n|_{X_\Nis}$.
Hence by \cite[Theorem Vbis.8.7.3]{sga4} we get \[ H^i(X, \ul{I}^n) \wequi \colim_\alpha H^i(X_\alpha, \ul{I}^n). \]
The same holds for cohomology on $X \times \A^1$ and $X_+ \wedge \Gm$.
We may thus assume (replacing $D$ by $D_\alpha$) that $\vcd_2(K(D)) < \infty$.

Let $X \in \Sm_D$.
We need to prove that $(*)$ \[ H^*(X, \ul{I}^n) \wequi H^*(X \times \A^1, \ul{I}^n) \wequi H^*(X_+ \wedge \Gm, \ul{I}^n). \]
Since $\ul{k}^M_n$ satisfies the analog of $(*)$ by Lemma \ref{lemm:kM-mot-local}, the exact sequence $\ul{I}^{n+1} \to \ul{I}^n \to \ul{k}_n^M$ from Theorem \ref{thm:jac1} shows that $(*)$ holds for $\ul{I}^n$ if and only if it holds for $\ul{I}^{n+1}$.
By Proposition \ref{prop:jac2} (and \cite[Theorem X.2.1]{sga4}), for $n$ sufficiently large we get $\ul{I}^n|_{X_\Nis} \wequi a_\ret \Z|_{X_\Nis}$.
It thus suffices to show that $L_\Nis a_\ret \Z$ satisfies the analog of $(*)$.
This follows from the fact that there exists a motivic spectrum $E = H_{\A^1} \Z[\rho^{-1}]$ with $E_i = L_\Nis a_\ret \Z$ for all $i$ \cite[Proposition 41]{bachmann-real-etale}.
\end{proof}
\begin{remark} \label{rmk:W-contraction}
Theorem \ref{thm:jac1} shows that $\Omega_\Gm L_\Nis \ul{I} \xrightarrow{\wequi} \Omega_\Gm L_\Nis \ul{W}$; indeed the cofiber of this map is given by $\Omega_\Gm \ul{\Z/2} \wequi 0$ (the vanishing holds e.g. since motivic cohomology vanishes in negative weights).
It follows that $\ul{\pi}_0(\ul{K}^W)_* \wequi \ul{I}^*$ and $\ul{\pi}_i = 0$ for $i \ne 0$; here $\ul{I}^n = \ul{W}$ for $n < 0$.
It also follows that $\Omega_\Gm L_\Nis \ul{W} \wequi L_\Nis \ul{W}$, and that this spectral sheaf is homotopy invariant.
\end{remark}

Recall that a noetherian valuation ring is a ring which is either a discrete valuation ring or a field \cite[Tag 00II]{stacks-project}.\NB{just a convenient abbreviation...}
\begin{lemma} \label{lemm:dvr-pres}
Let $R$ be a noetherian valuation ring.
Then there is a filtered system $R_\alpha$ of noetherian valuation rings with $\vcd(K(R_\alpha)) < \infty$ (i.e. there exists $N$ with $\vcd_p < N$ for all $p$) and $\colim_\alpha R_\alpha \wequi R$.
\end{lemma}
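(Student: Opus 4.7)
The plan is to approximate $R$ by intersecting it with the finitely generated subfields of its fraction field. Write $K := K(R)$, let $\{K_\alpha\}_\alpha$ denote the filtered poset of finitely generated subfields of $K$ (ordered by inclusion), and set $R_\alpha := R \cap K_\alpha$, with the evident inclusions as transition maps.

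The first thing I would verify is that each $R_\alpha$ is a noetherian valuation ring. The intersection of a valuation ring with a subfield of its fraction field is itself a valuation ring, so $R_\alpha$ is a valuation subring of $K_\alpha$. Its value group embeds into that of $R$, which is either $0$ or $\Z$, and hence is itself either $0$ or a nontrivial subgroup of $\Z$ (so isomorphic to $\Z$). Consequently $R_\alpha$ is either a field (equal to $K_\alpha$) or a rank-one discrete valuation ring, and thus noetherian in either case. In particular $K(R_\alpha) = K_\alpha$.

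Next, the identification $\colim_\alpha R_\alpha \wequi R$ is immediate: every $r \in R \subset K$ lies in the finitely generated subfield of $K$ it generates, and so $r \in R_\alpha$ for some $\alpha$. It remains to bound $\vcd(K_\alpha)$ uniformly in $\ell$. Each $K_\alpha$ is a finite extension of a purely transcendental extension of its prime field $k_0 \in \{\Q,\F_p\}$, of some transcendence degree $d = d_\alpha$. Standard bounds for the cohomological dimension of function fields \cite[\S II.4]{serre2013galois} give $\vcd_\ell(K_\alpha) \le \vcd_\ell(k_0) + d$ whenever $\ell$ is invertible in $k_0$, while $\cd_p(K_\alpha) \le 1$ automatically when $k_0 = \F_p$. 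Combined with $\vcd(\F_p) = 1$ and $\vcd(\Q) = 2$, this yields $\vcd(K_\alpha) \le d + 2 < \infty$, uniformly in $\ell$, as required.

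The only step carrying real content is the noetherianity of $R_\alpha$, which reduces to the order-theoretic observation that every nontrivial subgroup of $\Z$ is again isomorphic to $\Z$. The remainder is formal approximation together with classical Galois-cohomological bounds, and I do not anticipate any hidden difficulties.
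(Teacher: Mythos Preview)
Your proposal is correct and follows essentially the same route as the paper: intersect $R$ with the finitely generated subfields $K_\alpha$ of $K(R)$, observe that $R_\alpha = R \cap K_\alpha$ is a valuation ring whose value group injects into $\Z$ (hence is $0$ or $\Z$), and invoke the standard bounds on $\vcd$ of finitely generated fields. The paper cites SGA4 for the last step where you cite Serre, but the content is the same.
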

\begin{proof}
Let $K = K(R)$.
Then $K = \colim_\alpha K_\alpha$, where the colimit is over finitely generated subfields $K_\alpha \subset K$; this colimit is filtered.
Let $R_\alpha = R \cap K_\alpha$.
We shall show that $R_\alpha$ is a noetherian valuation ring, and $K(R_\alpha) = K_\alpha$.
This will imply the result since $\vcd_p(K_\alpha) < \infty$ uniformly in $p$ by \cite[Theorem X.2.1, Proposition X.6.1, Theorem X.5.1]{sga4}.
It is clear that $R_\alpha$ is a valuation ring and $K(R_\alpha) = K_\alpha$: if $x \in K_\alpha^\times$ then one of $x, x^{-1} \in R$ \cite[Tag 00IB]{stacks-project}, and hence one of $x, x^{-1} \in R_\alpha$; thus we conclude by \cite[Tag 052K]{stacks-project}.
To show that $R_\alpha$ is noetherian we must show that $K_\alpha^\times/R_\alpha^\times \wequi \Z$ or $\wequi 0$ \cite[Tags 00IE and 00II]{stacks-project}.
This is clear since there is an injection $K_\alpha^\times/R_\alpha^\times \hookrightarrow K^\times/R^\times$ and the latter group is $\wequi \Z$ or $\wequi 0$.
\end{proof}

\section{\texorpdfstring{$\eta$}{eta}-periodic framed spectra} \label{sec:eta-periodic-framed}
\subsection{}
Recall from \cite[\S3.2.2]{EHKSY} the symmetric monoidal $\infty$-category $\Cor^\fr(S)$ under $\Sm_S$.
We denote by $\Spc^\fr(S)$ and $\SHSfr(S)$ the motivic localizations of respectively $\Fun(\Cor^\fr(S)^\op, \Spc)$ and $\Fun(\Cor^\fr(S)^\op, \SH)$, and we put $\SH^\fr(S) = \SHSfr(S)[\Gmp{-1}] \wequi \Spc^\fr(S)[(\P^1)^{-1}]$.
The free-forgetful adjunction \[ \SH(S) \adj \SH^\fr(S) \] is an adjoint equivalence \cite[Theorem 18]{hoyois2018localization}.
We obtain the diagram
\begin{equation*}
\begin{tikzcd}
\Spc^\fr(S) \ar[r, "\Sigma^\infty_{S^1}", bend left=10] \ar[rr, "\Sigma^\infty_\fr" swap, bend left=50] & \SHSfr(S) \ar[r, "\sigma^\infty_\fr", bend left=10] \ar[l, "\Omega^\infty_{S^1}", bend left=10] & \SH(S). \ar[l, "\omega^\infty_\fr", bend left=10] \ar[ll, "\Omega^\infty_\fr" swap, bend left=50]
\end{tikzcd}
\end{equation*}

\subsection{}
Put \[ \Shv^\Nis_\SH(\Sm_S) = L_\Nis \Fun(\Sm_S^\op, \SH) \quad\text{and}\quad \Shv^\Nis_\SH(\Cor^\fr(S)) = L_\Nis \Fun(\Cor^\fr(S)^\op, \SH). \]
On either category we consider the $t$-structure with non-negative part generated \cite[Proposition 1.4.4.11]{lurie-ha} by the smooth schemes.

\begin{lemma} \label{lemm:SHSfr-basics}
\begin{enumerate}
\item $E \in \Fun(\Cor^\fr(S)^\op, \SH)$ is Nisnevich local (or homotopy invariant, or motivically local) if and only if the underlying spectral presheaf $UE \in \Fun(\Sm_S^\op, \SH)$ is.
\item The forgetful functor $U: \Shv^\Nis_\SH(\Cor^\fr(S)) \to \Shv^\Nis_\SH(\Sm_S)$ is $t$-exact.
\end{enumerate}
\end{lemma}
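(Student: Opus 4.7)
Both parts rest on the observation that $U$ is restriction along the inclusion $i:\Sm_S\to\Cor^\fr(S)$, which is the identity on objects. In particular, for any $E\in\Fun(\Cor^\fr(S)^\op,\SH)$ and any $X\in\Sm_S$, the spectrum $UE(X)$ coincides with $E(X)$, so $U$ commutes with Nisnevich stalks and hence with the Nisnevich homotopy sheaves $\pi_i^\Nis$.

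Part (1) will then follow by direct inspection of the definitions: Nisnevich locality asks that $E$ send Nisnevich squares in $\Sm_S$ to pullback squares, $\A^1$-invariance asks that the projections $X\times\A^1\to X$ with $X\in\Sm_S$ induce equivalences, and motivic locality is their conjunction. All test diagrams lie in $\Sm_S\subset\Cor^\fr(S)$, so the conditions on $E$ and $UE$ are literally the same.

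For part (2), the plan is to identify the $t$-structure on $\Shv^\Nis_\SH(\ph)$ generated by the smooth schemes with the Nisnevich-local homotopy $t$-structure, in which $\ge 0$ (resp.~$\le 0$) is characterized by vanishing of $\pi_i^\Nis$ for $i<0$ (resp.~$i>0$); this is routine given compactness of the generators, via \cite[Proposition 1.4.4.11]{lurie-ha}. Then $t$-exactness of $U$ is immediate from its compatibility with $\pi_i^\Nis$. An alternative route proceeds via adjoints: the left Kan extension $F=i_!$ sends $\Sigma^\infty_+ X$ to $\Sigma^\infty_\fr X_+$, hence is right $t$-exact, so its right adjoint $U$ is left $t$-exact; for the other half, $U(\Sigma^\infty_\fr X_+)$ is visibly connective (being a stabilization of framed correspondence spaces) and $U$ preserves colimits (it commutes with Nisnevich sheafification because it preserves sheaves by (1)), so $U$ is also right $t$-exact. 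The only mild obstacle in either approach is a bookkeeping point---either identifying the two $t$-structures, or verifying colimit preservation by $U$ on sheaves---and both reduce to $U$ respecting Nisnevich stalks, which is the content of the opening observation.
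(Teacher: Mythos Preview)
Your outline for (1) and for left $t$-exactness in (2) is fine and matches the paper. The substance of (2) is right $t$-exactness, and here your second approach is the paper's strategy, but you gloss over the two points that carry all the content.

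First, ``$U$ commutes with Nisnevich sheafification because it preserves sheaves by (1)'' is not a valid implication: preserving local objects gives you a comparison map $L_\Nis U \to U L_\Nis$, not that it is an equivalence. One needs to know that $U$ (at the presheaf level) preserves Nisnevich-local equivalences, which is what the paper imports from \cite[Proposition~3.2.14]{EHKSY}.

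Second, and more seriously, ``$U(\Sigma^\infty_\fr X_+)$ is visibly connective (being a stabilization of framed correspondence spaces)'' hides the entire argument. The unstable forgetful functor $U_\Sigma:\PSh_\Sigma(\Cor^\fr(S))\to\PSh_\Sigma(\Sm_S)_*$ preserves only \emph{sifted} colimits, not pushouts, so you cannot simply commute it past $\Sigma^n$ or $\Sigma^\infty$. The paper's mechanism is semi-additivity of $\PSh_\Sigma(\Cor^\fr(S))$: every object is canonically a commutative monoid, so $\Sigma^n F$ is computed by the iterated bar construction $B^n F$, which \emph{is} a sifted colimit; hence $U_\Sigma(\Sigma^n F)\simeq B^n U_\Sigma(F)$, and this is $n$-connective by Segal. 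Without this step the connectivity is not ``visible''.

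Your first approach does not avoid this. The coconnective parts of the two $t$-structures are indeed detected sectionwise, and a $t$-structure is determined by its coconnective part; but that only tells you $U$ \emph{reflects} $\le 0$ and hence reflects $\ge 0$. To conclude $U$ \emph{preserves} $\ge 0$ you still need $U(\Sigma^\infty_\fr X)\in\Shv^\Nis_\SH(\Sm_S)_{\ge 0}$, which is exactly the point above. The appeal to \cite[Proposition~1.4.4.11]{lurie-ha} only constructs the $t$-structure; it does not identify its connective part with a homotopy-sheaf condition on the framed side.
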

\begin{proof}
(1) holds by definition.
(2) The functor $U_\Sigma: \PSh_\Sigma(\Cor^\fr(S)) \to \PSh_\Sigma(\Sm_S)_*$ preserves filtered (in fact sifted) colimits and commutes with $L_\Nis$ \cite[Proposition 3.2.14]{EHKSY}.
Consequently $U_\Nis: \Shv^\Nis(\Cor^\fr(S)) \to \Shv_\Nis(\Sm_S)_*$ also preserves filtered colimits.
Being a right adjoint it also preserves limits, and hence commutes with spectrification.
Consequently it suffices to prove the following: given $F \in \PSh_\Sigma(\Cor^\fr(S))$ and $n \ge 0$, we have $U_\Sigma(\Sigma^n F) \in \PSh_\Sigma(\Sm_S)_{\ge n}$; indeed then \[ U(\Sigma^\infty F) \wequi \colim_n \Sigma^{\infty-n} U_\Sigma(\Sigma^n F) \in \Shv^\Nis_\SH(\Sm_S)_{\ge 0} \] by what we have already said.
Writing $\Sigma^n F$ as an iterated sifted colimit, using semi-additivity of $\PSh_\Sigma(\Cor^\fr(S))$ \cite[sentence after Lemma 3.2.5]{EHKSY} and the fact that $U_\Sigma$ commutes with sifted colimits, we find that $U_\Sigma(\Sigma^n F)$ is given by $B^nU_\Sigma(F)$, i.e. the iterated bar construction applied sectionwise.
The required connectivity is well-known; see e.g. \cite[Proposition 1.5]{segal1974categories}.
\end{proof}

We denote by $\tau_{\ge i}^\Nis$, $\tau_{\le i}^\Nis$ and $\tau_{=i}^\Nis$ the truncation functors corresponding to the above $t$-structures.
Note that the $t$-structure we have constructed on $\Shv^\Nis_\SH(\Sm_S)$ coincides with the usual one \cite[Definition 1.3.2.5]{lurie-sag}, and in particular $\Shv^\Nis_\SH(\Sm_S)^\heart$ is just the category of Nisnevich sheaves of abelian groups on $\Sm_S$ \cite[Proposition 1.3.2.7(4)]{lurie-sag}.

\begin{remark} \label{rmk:Sigma-infty-S1}
The proof of Lemma \ref{lemm:SHSfr-basics} also shows the following: if $F \in \Shv^\Nis(\Cor^\fr(S))$ and $X \in \Sm_S$, then $F(X) \in \Spc$ is a commutative monoid ($\Cor^\fr(S)$ being semiadditive) and $\Sigma^\infty F \in \Shv^\Nis_\SH(\Cor^\fr(S))$ has underlying sheaf of spectra corresponding to the group completion of $F$.
\end{remark}

\begin{remark} \label{rmk:htpy-sheaves-conservative}
If $S$ has finite Krull dimension, then the above $t$-structure is non-degenerate \cite[Proposition A.3]{bachmann-norms}.
\end{remark}

\subsection{} \label{subsec:eta}
The unit $t \in \scr O(\A^1 \setminus 0)^\times$ defines a framing of the identity on $\A^1 \setminus 0$ and hence a framed correspondence $\A^1\setminus 0 \rightsquigarrow *$.
We denote by \[\eta: \Gm \to \1 \in \SHSfr(S) \] the corresponding map (obtain by precomposition with $\Gm \hookrightarrow \Gm \vee \1 \wequi \A^1 \setminus 0$).

The following is a key result.
It shows that the Hopf map $\eta$ is already accessible in framed $S^1$-spectra, which enables all our further results.
\begin{lemma} \label{lemm:eta-framed}
There is a homotopy $\sigma_\fr^\infty(\eta) \wequi \eta$, where on the right hand side we mean the usual motivic stable Hopf map.
\end{lemma}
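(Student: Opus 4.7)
The plan is to trace both maps through the reconstruction equivalence $\SH(S) \wequi \SH^\fr(S)$ of \cite[Theorem 18]{hoyois2018localization}. First I would unwind the definition: the framed $\eta$ in $\SHSfr(S)$ is induced by the framed correspondence $c = (\A^1 \setminus 0 \xleftarrow{\id} \A^1 \setminus 0 \to *; t)$, where the framing is the unit $t \in \scr O^\times(\A^1 \setminus 0)$, followed by precomposition along $\Gm \hookrightarrow \A^1 \setminus 0 \wequi \Gm \vee \1$. Applying $\sigma^\infty_\fr: \SHSfr(S) \to \SH^\fr(S)$, which is $\Gm$-stabilization on the framed side, the image is represented by the same framed correspondence, viewed in $\SH^\fr(S)$.

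Second, under the equivalence $\SH^\fr(S) \wequi \SH(S)$, a tangentially framed correspondence $(X \xleftarrow{f} Z \to Y; \phi)$ corresponds to the Pontryagin--Thom collapse map it represents. I would apply this recipe to $c$: the framing $t$ trivializes the tangent bundle of $\A^1 \setminus 0$, and unwinding the Pontryagin--Thom construction yields, after precomposition with $\Gm \hookrightarrow \A^1 \setminus 0 \wequi \Gm \vee \1$, precisely the desuspension of the Hopf bundle $\A^2 \setminus 0 \to \P^1$, i.e., the classical motivic Hopf map $\eta: \Gm \to \1$ in $\SH(S)$.

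The main obstacle is the Pontryagin--Thom identification in the second step, which requires unwinding Hoyois's construction in \cite{hoyois2018localization} on the specific correspondence $c$. This identification has essentially been carried out in the framed-motivic literature (see e.g.\ \cite[\S 5]{EHKSY}, where the canonical framed correspondence built from the unit $t$ is identified with a representative of the motivic Hopf element), and one can invoke this directly to conclude. A more hands-on alternative would be to appeal to a universal characterization of $\eta$ in $[\Gm, \1]_{\SH(S)}$ (for instance, via its image in $\pi_{-1}(\1)_{-1}$ and Morel's identification for fields, combined with conservativity along residue fields from \cite[Proposition B.3]{bachmann-norms}), and to verify that both sides hit the same class; but the geometric route above is the most natural and matches the level at which the statement is being used.
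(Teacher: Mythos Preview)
Your general strategy---trace the framed correspondence through the reconstruction equivalence and identify the resulting map with the classical Hopf element---is exactly the right one, and it is also the route the paper takes. The difficulty is that your second step is not a proof but a restatement of the claim: ``unwinding the Pontryagin--Thom construction yields \dots\ precisely the desuspension of the Hopf bundle'' is the content of the lemma, and the vague pointer to \cite[\S5]{EHKSY} does not resolve it. To my knowledge there is no black-box result in that reference identifying this specific framed correspondence with $\eta$ in $\SH(S)$; what is available (and what the paper uses) is \cite[Example 3.1.6]{EHKSY2}, which gives an explicit unstable model for the framing-twist map $\lra{t}:(\A^1\setminus 0)_+\to(\A^1\setminus 0)_+$, namely $\Sigma^{\infty-2,1}$ of $(t,x)\mapsto(t,tx)$ on $(\A^1\setminus 0)_+\wedge\A^1/(\A^1\setminus 0)$. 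From there one still has to do real work: the paper embeds this map into a commutative square built from the multiplication map $(t,x)\mapsto tx$ and then invokes Morel's stable splitting \cite[Lemma 6.1.1]{morel2004motivic-pi0} of $\Gm\times\Gm\to\Gm$ to read off that the induced map $\tilde\eta$ agrees with $(\id,\eta)$ on the summands. This diagram-and-splitting argument is the actual content that your proposal is missing.

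Your alternative route via Morel's computation of $\pi_{-1}(\1)_{-1}$ and residue-field conservativity is also not quite viable as stated: conservativity from \cite[Proposition B.3]{bachmann-norms} detects equivalences of spectra, not equality of maps, and over a general base one does not have Morel's description of $[\Gm,\1]$ to begin with. One could perhaps make something like this work over $\Spec\Z$ and then base change, but that already presupposes the field case, which in turn needs exactly the kind of explicit identification the paper carries out.
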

\begin{proof}
Write $\lra{t}: (\A^1 \setminus 0)_+ \to (\A^1 \setminus 0)_+ \in \SH(S)$ for the map induced by the framing $t$ of the identity, and $\tilde\eta$ for the induced map $\Sigma^\infty_+ \Gm \to \1$.
By \cite[Example 3.1.6]{EHKSY2}, the map $\lra{t}$ is given by $\Sigma^{\infty-2,1}$ of the map of pointed motivic spaces \[ (\A^1 \setminus 0)_+ \wedge \A^1/(\A^1 \setminus 0) \to (\A^1 \setminus 0)_+ \wedge \A^1/(\A^1 \setminus 0), \quad (t, x) \mapsto (t, tx), \] and hence $\tilde\eta$ is given by $\Sigma^{\infty-2,1}$ of the map \[ \eta': (\A^1 \setminus 0)_+ \wedge \A^1/(\A^1 \setminus 0) \to \A^1/\A^1 \setminus 0, \quad (t, x) \mapsto tx. \]
Consider the commutative diagram of pointed motivic spaces (with $\A^1$ pointed at $1$)
\begin{equation*}
\begin{CD}
\Gm \times \Gm @>>> \Gm \times \A^1 @>>> (\Gm \times \A^1)/(\Gm \times \Gm) \wequi (\A^1 \setminus 0)_+ \wedge \A^1/(\A^1 \setminus 0) \\
@VVV                       @VVV                @V{\eta'}VV \\
\Gm @>>>                \A^1 @>>>            \A^1/\Gm.
\end{CD}
\end{equation*}
All the vertical maps are induced by $(t, x) \mapsto tx$ and the horizontal maps are induced by the canonical inclusions and projections.
Stably this splits as \cite[Lemma 6.1.1]{morel2004motivic-pi0}
\begin{equation*}
\begin{CD}
\Gm \vee \Gm \vee \Gmp{2} @>{(\id,0,0)}>> \Gm @>>> S^{2,1} \vee \Sigma^{2,1}\Gm \\
@V{(\id,\id,\Sigma^{1,1}\eta)}VV          @VVV      @V{\Sigma^{2,1}\tilde\eta}VV \\
\Gm                       @>>>             0  @>>> S^{2,1}.
\end{CD}
\end{equation*}
Since the rows are cofibration sequences, it follows\todo{really?} that $\tilde \eta \wequi (\id, \eta)$, which implies the desired result.
\end{proof}

We call $E \in \SHSfr(S)$ \emph{$\eta$-periodic} if the canonical map $\eta^*: E \to \Omega_\Gm E$ is an equivalence.
Write \[ \SHSfr(S)[\eta^{-1}] \subset \SHSfr(S) \] for the full subcategory on $\eta$-periodic spectra.
These are the local objects of a symmetric monoidal localization of $\SHSfr(S)$.
\begin{lemma}
There is a canonical equivalence $\SHSfr(S)[\eta^{-1}] \wequi \SH(S)[\eta^{-1}]$.
\end{lemma}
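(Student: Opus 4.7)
The plan is to combine the Hoyois equivalence $\SH^\fr(S) \wequi \SH(S)$ with the observation that inverting $\eta$ in $\SHSfr(S)$ already forces $\Gm$ to become invertible, making the subsequent $\Gm$-stabilization step vacuous. All three functors $\SHSfr(S)$, $\SH^\fr(S)$ and $\SH(S)$ are presentable symmetric monoidal $\infty$-categories and all the comparison functors are symmetric monoidal and colimit-preserving, so we may freely use universal properties of presentable symmetric monoidal localizations.

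First I would set up the candidate equivalence. By Lemma \ref{lemm:eta-framed}, the composite
\[ F := \omega^\infty_\fr \circ \sigma^\infty_\fr : \SHSfr(S) \to \SH^\fr(S) \wequi \SH(S) \]
sends the framed Hopf map to the usual motivic Hopf map $\eta$. Since $F$ is symmetric monoidal and preserves colimits, it descends to a symmetric monoidal colimit-preserving functor
\[ \bar F : \SHSfr(S)[\eta^{-1}] \longrightarrow \SH(S)[\eta^{-1}], \]
and this will be the equivalence we seek. By construction $\bar F$ factors as $\SHSfr(S)[\eta^{-1}] \to \SH^\fr(S)[\eta^{-1}] \wequi \SH(S)[\eta^{-1}]$, where the second arrow is the restriction of the Hoyois equivalence and is an equivalence for formal reasons. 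It thus suffices to show that the first arrow is an equivalence.

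The key observation is that in $\SHSfr(S)[\eta^{-1}]$ the map $\eta: \Gm \to \1$ is an equivalence by definition, so $\Gm$ becomes isomorphic to the monoidal unit and is therefore an invertible object. Recall that $\SH^\fr(S) = \SHSfr(S)[\Gmp{-1}]$ is defined as the symmetric monoidal presentable localization of $\SHSfr(S)$ in which $\Gm$ is inverted. Using the symmetric monoidal Lurie tensor product, we may compute
\[ \SH^\fr(S)[\eta^{-1}] \;\wequi\; \SHSfr(S)[\Gmp{-1}] \otimes_{\SHSfr(S)} \SHSfr(S)[\eta^{-1}] \;\wequi\; \SHSfr(S)[\eta^{-1}][\Gmp{-1}], \]
and since $\Gm$ is already invertible in $\SHSfr(S)[\eta^{-1}]$, the second factor $[\Gmp{-1}]$ is the identity functor. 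Hence $\sigma^\infty_\fr[\eta^{-1}]$ is an equivalence, completing the argument.

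The main obstacle is bookkeeping with the universal properties: one needs that the two presentable symmetric monoidal localizations at $\eta$ and at $\Gm$ commute, and that localizing at an already-invertible object is the identity. Both are standard once phrased in terms of the symmetric monoidal tensor product of presentable $\infty$-categories, but it is worth double-checking that inverting $\eta: \Gm \to \1$ is truly a symmetric monoidal localization — which follows because the class of maps $\{\eta \otimes \id_X : X \in \SHSfr(S)\}$ is stable under the tensor product by the projection formula — so that the universal property above applies.
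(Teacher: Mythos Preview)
Your argument is correct and is essentially the content of the reference the paper cites: the paper's proof simply invokes \cite[Proposition 3.2]{hoyois2016equivariant}, which is precisely the statement that if a map from an object to the unit is inverted, then that object becomes $\otimes$-invertible and no further stabilization is needed. One small slip: your functor $F$ should just be $\sigma^\infty_\fr$ (followed by the equivalence $\SH^\fr(S)\simeq\SH(S)$); the composite $\omega^\infty_\fr\circ\sigma^\infty_\fr$ lands back in $\SHSfr(S)$, not in $\SH(S)$.
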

\begin{proof}
In light of Lemma \ref{lemm:eta-framed}, this is a special case of \cite[Proposition 3.2]{hoyois2016equivariant}.
\end{proof}

\subsection{}
From now on we fix a Dedekind scheme $D$ with $1/2 \in D$.
Consider $\omega^\infty_\fr(\KW) \in \SHSfr(D) \subset \Shv^\Nis_\SH(\Cor^\fr(D))$.
This object is $\eta$-periodic by construction.

\begin{definition} \label{def:HW}
We put \[ \HW = \tau_{=0}^\Nis \omega^\infty_\fr(\KW) \in \Shv^\Nis_\SH(\Cor^\fr(D)). \]
\end{definition}
\begin{remark} \label{rmk:KW-htpy}
Multiplication by $\beta^i$ induces $\tau_{=4i}^\Nis \omega^\infty_\fr(\KW) \wequi \HW$, and the other homotopy sheaves vanish \cite[Proposition 6.3]{schlichting2016hermitian} \cite[Theorem 1.5.22]{balmer2005witt}.
\end{remark}

\begin{lemma} \label{HW-mot-loc}
$\HW$ is motivically local and $\eta$-periodic.
\end{lemma}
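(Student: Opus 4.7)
The plan is to reduce both claims to statements about the underlying Nisnevich sheaf of spectra of $\HW$ on $\Sm_D$, which I will identify with $L_\Nis \ul{W}$, and then invoke the results of Section~\ref{sec:In}. By Lemma~\ref{lemm:SHSfr-basics}(1), motivic locality of $\HW$ may be tested after forgetting framed transfers; since $\Omega_\Gm$ is a right adjoint it commutes with the forgetful functor $U$, so the $\eta$-periodicity claim likewise reduces to one on $U\HW$. Moreover Lemma~\ref{lemm:SHSfr-basics}(2) says that $U$ is $t$-exact, so $U\HW \wequi \tau_{=0}^\Nis U\omega^\infty_\fr(\KW)$, i.e.\ the zeroth Nisnevich homotopy sheaf of the $S^1$-spectrum underlying $\KW$.

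The next step is to identify this sheaf with $L_\Nis \ul{W}$. Over a field this is the standard computation $\ul\pi_0(\KW) \wequi \ul{W}$ underlying Remark~\ref{rmk:KW-htpy}; over a Dedekind base $D$ the identification is obtained by pulling back to residue fields, using conservativity of residue-field pullbacks (Remark~\ref{rmk:htpy-sheaves-conservative} together with \cite[Proposition~A.3]{bachmann-norms}) and the compatibility of $\omega^\infty_\fr(\KW)$ and of $\tau_{=0}^\Nis$ with such pullbacks.

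Granting $U\HW \wequi L_\Nis \ul{W}$, both properties fall out of Section~\ref{sec:In}. For motivic locality I would use the rank fiber sequence $L_\Nis \ul{I} \to L_\Nis \ul{W} \to \Z/2$ of sheaves of spectra: by Corollary~\ref{cor:cons-K-W} the sheaf $L_\Nis \ul{I} = (\ul{K}^W)_1$ is motivically local, $\Z/2$ is constant hence motivically local, so their fiber $L_\Nis \ul{W}$ is too. For $\eta$-periodicity, Remark~\ref{rmk:W-contraction} provides an equivalence $\Omega_\Gm L_\Nis \ul{W} \wequi L_\Nis \ul{W}$ implemented by multiplication by $\lra{t}-1 \in \ul{I} \subset \ul{W}$, which is precisely the formula by which $\eta$ acts; hence this equivalence coincides with the $\eta$-action.

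The main obstacle is making the identification $U\HW \wequi L_\Nis \ul{W}$ precise and tracking the $\eta$-action through it. Once this is in hand, the rest is a direct application of Corollary~\ref{cor:cons-K-W} and Remark~\ref{rmk:W-contraction}.
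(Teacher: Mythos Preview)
Your overall strategy matches the paper's: reduce to the underlying spectral sheaf, identify it with $L_\Nis\ul{W}$, and invoke Remark~\ref{rmk:W-contraction}. Two points deserve correction.

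First, the identification $U\HW\simeq L_\Nis\ul{W}$ is much more direct than your residue-field argument suggests, and your argument as stated does not work: you have not produced a comparison map to test for being an equivalence, and there is no reason $\tau_{=0}^\Nis$ should commute with pullback to residue fields of $D$ (that kind of base-change compatibility is exactly what this paper is trying to establish). The correct route is simply that $\omega^\infty_\fr(\KW)(X)$ is the Balmer--Witt $K$-theory spectrum of $X$, whose $\pi_0$ is $W(X)$ by the references in Remark~\ref{rmk:KW-htpy}; sheafifying gives $U\HW\simeq\ul{W}$ over any base with $1/2$ invertible.

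Second, and more seriously, you assert that ``multiplication by $\lra{t}-1$ \dots\ is precisely the formula by which $\eta$ acts'' without justification. This is the entire content of the lemma. The map $\eta$ on $\HW$ is the framed correspondence of \S\ref{subsec:eta} transported through $\omega^\infty_\fr(\KW)$ and a $t$-structure truncation; to know that on the underlying sheaf $\ul{W}$ it is multiplication by $\lra{t}-1$ one needs to understand how framed transfers act on hermitian $K$-theory. The paper supplies exactly this input: the framed model of $\KO$ from \cite[\S A]{bachmann-euler}, combined with Lemma~\ref{lemm:eta-framed}, identifies the induced action with $\lra{t}-1$. Once that is in hand, Remark~\ref{rmk:W-contraction} gives both motivic locality and $\eta$-periodicity in one stroke; your fiber-sequence argument for motivic locality via $L_\Nis\ul{I}\to L_\Nis\ul{W}\to\Z/2$ is a valid alternative but unnecessary, since Remark~\ref{rmk:W-contraction} already records homotopy invariance of $L_\Nis\ul{W}$.
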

\begin{proof}
The framed construction of $\KO$ as in \cite[\S A]{bachmann-euler} together with Lemma \ref{lemm:eta-framed} shows that the map $\eta^*: \HW \to \Omega_\Gm \HW$ is on the level of spectral sheaves (i.e. sheaves of abelian groups) induced by multiplication by $\lra{t}-1$.\NB{Can arbitrarily add multiples of the identity, since they go away in the projection to $\Gm$.}
The result now follows from Remark \ref{rmk:W-contraction}.
\end{proof}

Thus $\HW$ defines an object of $\SH(D)[\eta^{-1}]$.

\subsection{} \label{subsec:KW-HW}
By construction, the forgetful functor $\SHSfr(D)[\eta^{-1}] \to \SH(D)[\eta^{-1}] \subset \SH(D)$ sends $\HW$ to the spectrum $(\ul{W}, \ul{W}, \dots)$, with the canonical structure maps.
It follows that there is a canonical morphism (of ring spectra) $\ul{K}^W \to \HW$, given in degree $n$ by the inclusion $\ul{I}^n \hookrightarrow \ul{W}$.
The main point of the following result is to determine the action of $\eta$ on $\ul{K}^W$.
\begin{lemma} \label{lemm:identify-HW}
The canonical maps $\ul{K}^W \to \HW$ and $\ul{K}^W \to \ul{k}^M$ induce equivalences $\ul{K}^W[\eta^{-1}] \wequi \HW$ and $\ul{K}^W/\eta \wequi \ul{k}^M$.
\end{lemma}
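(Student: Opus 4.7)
My plan would be to identify the action of $\eta$ on $\ul{K}^W$ levelwise and then directly compute both $\ul{K}^W/\eta$ and $\ul{K}^W[\eta^{-1}]$ in terms of the graded sheaf of rings $\ul{I}^*$.

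The first step, and principal obstacle, is to identify the $\eta$-action on $\ul{K}^W$. Since $\ul{K}^W$ has homotopy sheaves concentrated in $\ul\pi_0$, with $\ul\pi_0(\ul{K}^W)_n \wequi \ul{I}^n$ by Remark \ref{rmk:W-contraction}, multiplication by $\eta$ is determined by its effect on these sheaves. By the construction of $\ul{K}^W$ (Example \ref{ex:build-spectrum-from-ring}) the bonding class is $\lra{t}-1 \in \ul{I}(\Gm)$, and a direct unwinding using the identification $\Omega_\Gm \ul{I} \wequi \ul{W}$ of Remark \ref{rmk:W-contraction} (inverse to multiplication by $\lra{t}-1$) shows that the image of $\eta$ under the unit $\1 \to \ul{K}^W$ corresponds to $1 \in \ul{W} \wequi \ul\pi_0(\ul{K}^W)_{-1}$. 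Consequently the induced map $\eta^*\colon \ul\pi_0(\ul{K}^W)_n \to \ul\pi_0(\ul{K}^W)_{n-1}$ is the natural inclusion $\ul{I}^n \hookrightarrow \ul{I}^{n-1}$; equivalently, $\eta\colon \Sigma^{1,1}\ul{K}^W \to \ul{K}^W$ is in level $n$ the inclusion $\ul{I}^{n+1}\hookrightarrow \ul{I}^n$.

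Given this identification, the equivalence $\ul{K}^W/\eta \wequi \ul{k}^M$ follows by computing the cofiber of $\eta$ levelwise: in level $n$ one gets $\ul{I}^n/\ul{I}^{n+1} \wequi \ul{k}_n^M$ by Theorem \ref{thm:jac1}, compatibly with the canonical ring map $\ul{K}^W \to \ul{k}^M$.

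For the equivalence $\ul{K}^W[\eta^{-1}] \wequi \HW$, I would exploit that $\HW$ is already $\eta$-periodic (Lemma \ref{HW-mot-loc}) and show that the cofiber $C := \cof(\ul{K}^W \to \HW)$ satisfies $C[\eta^{-1}] \wequi 0$. Levelwise one has $C_n \wequi \ul{W}/\ul{I}^n$, and the $\eta$-action $C_n \to C_{n-1}$ is the quotient map $\ul{W}/\ul{I}^n \twoheadrightarrow \ul{W}/\ul{I}^{n-1}$; the resulting $\eta$-colimit $\colim_k \ul{W}/\ul{I}^{n-k}$ stabilizes at $0$ using the convention $\ul{I}^m = \ul{W}$ for $m \le 0$. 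Once the identification of the $\eta$-action is in hand, the remainder is a bookkeeping exercise applying Theorem \ref{thm:jac1} and Remark \ref{rmk:W-contraction}.
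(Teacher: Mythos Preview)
Your overall strategy is correct and matches the paper's: identify the action of $\eta$ on $\ul{K}^W$ as the inclusion $\ul{I}^{n+1} \hookrightarrow \ul{I}^n$ on homotopy sheaves, then read off the cofiber via Theorem~\ref{thm:jac1} and the $\eta$-localization from $\colim(\ul{I}^n \hookrightarrow \ul{I}^{n-1} \hookrightarrow \cdots) \wequi \ul{W}$. Once the $\eta$-action is known, your computation of $\ul{K}^W/\eta$ coincides with the paper's, and your argument for $\ul{K}^W[\eta^{-1}]$ via the cofiber $C = \cof(\ul{K}^W \to \HW)$ is a harmless repackaging of the same colimit.

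The gap is precisely where you flag the ``principal obstacle.'' You assert that ``a direct unwinding using the identification $\Omega_\Gm \ul{I} \wequi \ul{W}$ of Remark~\ref{rmk:W-contraction}'' shows that $\eta$ maps to $1 \in \ul{W}$. But Remark~\ref{rmk:W-contraction} and the bonding class $\lra{t}-1$ only describe the $\Gm$-spectrum structure of $\ul{K}^W$, i.e.\ the weight-shift equivalences $\ul{K}^W_n \wequi \Omega_\Gm \ul{K}^W_{n+1}$. The action of the stable Hopf map $\eta \colon \Gm \to \1$ is a different piece of structure, and for a general $\Gm$-spectrum it is not determined by the bonding maps alone. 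No amount of unwinding Example~\ref{ex:build-spectrum-from-ring} or Remark~\ref{rmk:W-contraction} will produce the image of $\eta$ without some external input identifying $\eta$ itself.

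The paper provides this input through framed correspondences. Lemma~\ref{lemm:eta-framed} shows that in $\SH^\fr(D)$ the map $\eta$ is represented by the framing $t$, and (as used already in the proof of Lemma~\ref{HW-mot-loc}) on $\HW$ this makes $\eta^*$ multiplication by $\lra{t}-1$. The paper then argues that, since $\ul{I}^n \hookrightarrow \ul{W}$ is a subsheaf, $\ul{K}^W$ admits a \emph{unique} lift to $\SH^\fr(D)$ compatible with $\ul{K}^W \to \HW$, so the framed description of $\eta$ applies to $\ul{K}^W$ as well and yields the inclusion $\ul{I}^{n+1} \hookrightarrow \ul{I}^n$. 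You could repair your argument without invoking framed transfers explicitly by instead citing Lemma~\ref{HW-mot-loc}: the map $\ul{K}^W \to \HW$ is $\eta$-linear, the $\eta$-action on $\ul\pi_0(\HW)_* \wequi \ul{W}$ is the identity (under the bonding identification), and the injectivity of $\ul{I}^n \hookrightarrow \ul{W}$ then forces the $\eta$-action on $\ul\pi_0(\ul{K}^W)_*$ to be the inclusion. Either way, the framed identification of $\eta$ carried out in \S\ref{sec:eta-periodic-framed} is the essential input; it cannot be replaced by the bonding data of $\ul{K}^W$ alone.
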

\begin{proof}
The equivalence $\SH^\fr(D) \wequi \SH(D)$ restricts to the subcategories of those objects such that $\ul{\pi}_i(\ph)_j = 0$ for $i \ne 0$.
These are exactly the objects representable by prespectra valued in sheaves of abelian groups that are motivic spectra when viewed as valued in spectral sheaves.
The construction of $\HW$ supplies the sheaf $\ul{W}$ with a structure of framed transfers.
Then the subsheaf $\ul{I}^n$ admits at most one compatible structure of framed transfers; the existence of the map $\ul{K}^W \to \HW$ together with the above discussion shows that this structure exists.
This supplies a description of the (unique) lift of $\ul{K}^W$ to $\SH^\fr(D)$.
From this and Lemma \ref{lemm:eta-framed} it follows that the action by $\eta$ on $\ul{K}^W$ induces the inclusion $\ul{I}^{*+1} \to \ul{I}^*$ on homotopy sheaves.
This implies immediately that $\ul{K}^W[\eta^{-1}] \to \HW$ induces an isomorphism on homotopy sheaves and hence is an equivalence (see Remark \ref{rmk:htpy-sheaves-conservative}).
For $\ul{K}^W/\eta$ the same argument works using Theorem \ref{thm:jac1}.
\end{proof}

\subsection{}
\begin{definition} \label{def:kw}
We put $\kw = \tau_{\ge 0}^\Nis \omega_\fr^\infty \KW \in \Shv^\Nis_\SH(\Cor^\fr(D))$
\end{definition}
The canonical map $\KO \to \KW \in \SH(D)$ induces \[ \tau_{\ge 0}^\Nis \omega_\fr^\infty \KO \to \kw. \]

The following is one of the main results of this section.
\begin{lemma} \label{lemm:identify-kw}
\begin{enumerate}
\item The objects $\tau_{\ge 0}^\Nis \omega^\infty_\fr \KO, \kw \in \Shv^\Nis_\SH(\Cor^\fr(D))$ are motivically local.
\item $\kw$ is $\eta$-periodic.
\item The canonical map $\Sigma^\infty_{S^1} \Omega^\infty_\fr \KO \to \tau_{\ge 0}^\Nis \omega^\infty_\fr \KO$ is an equivalence.
\item The canonical map $\Sigma^\infty_{S^1} \Omega^\infty_\fr \KO \to \kw$ is an $\eta$-periodization.
\end{enumerate}
\end{lemma}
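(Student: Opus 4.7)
The strategy is to control the Nisnevich homotopy sheaves of $\omega^\infty_\fr \KO$ and $\omega^\infty_\fr \KW$ via Remark \ref{rmk:KW-htpy} and the strict $\A^1$-invariance of $\ul{W}$ (Remark \ref{rmk:W-contraction}), and then to invoke the framed recognition principle to obtain (3), from which (4) will essentially follow by commuting truncation past $\eta$-periodization.

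For (1), I would compute the Nisnevich homotopy sheaves: by Remark \ref{rmk:KW-htpy} those of $\omega^\infty_\fr \KW$ are copies of $\ul{W}$ concentrated in degrees $4i$, while those of $\omega^\infty_\fr \KO$ are the usual hermitian $K$-theory sheaves. Each is strictly $\A^1$-invariant (by Remark \ref{rmk:W-contraction} for $\ul{W}$, and by analogous results for $\ul{\GW}$ and algebraic $K$-theory), so every stage of the Postnikov tower in $\Shv^\Nis_\SH(\Cor^\fr(D))$ is motivically local, whence so are $\tau_{\ge 0}^\Nis \omega^\infty_\fr \KO$ and $\kw$. For (2), I would use that $\omega^\infty_\fr \KW$ is $\eta$-periodic by construction, so $\eta^\ast \colon \omega^\infty_\fr \KW \xrightarrow{\wequi} \Omega_\Gm \omega^\infty_\fr \KW$. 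Because every homotopy sheaf is a copy of $\ul{W}$ and $\Omega_\Gm$ fixes $\ul{W}$ (Remark \ref{rmk:W-contraction}), the functor $\Omega_\Gm$ is $t$-exact on $\omega^\infty_\fr \KW$, hence commutes with $\tau_{\ge 0}^\Nis$; applying the truncation to the $\eta^\ast$-equivalence yields $\kw \wequi \Omega_\Gm \kw$.

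For (3), the counit $\Sigma^\infty_{S^1} \Omega^\infty_\fr \KO \to \omega^\infty_\fr \KO$ factors through $\tau_{\ge 0}^\Nis$ because $\Sigma^\infty_{S^1}$ lands in the non-negative part of the Nisnevich $t$-structure (cf. the proof of Lemma \ref{lemm:SHSfr-basics}(2) and Remark \ref{rmk:Sigma-infty-S1}). To identify the factored map as an equivalence, I would invoke the framed recognition principle of \cite{EHKSY}: for any $E \in \SHSfr(D)$, the counit $\Sigma^\infty_{S^1}\Omega^\infty_{S^1} E \to E$ exhibits its domain as $\tau_{\ge 0}^\Nis E$. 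Applied with $E = \omega^\infty_\fr \KO$ (so that $\Omega^\infty_{S^1} E = \Omega^\infty_\fr \KO$) this gives the claim. For (4), by (3) the map is the $\tau_{\ge 0}^\Nis$ of $\omega^\infty_\fr \KO \to \omega^\infty_\fr \KW$, and the latter is itself an $\eta$-periodization in $\SHSfr(D)$ because $\omega^\infty_\fr$, being right adjoint to the symmetric monoidal $\sigma^\infty_\fr$, carries $\KO \to \KW = \KO[\eta^{-1}]$ to an $\eta$-periodization. Writing $[\eta^{-1}]$ as a filtered colimit of $\Omega_\Gm$-powers and repeating the $t$-exactness argument from (2), this colimit commutes with $\tau_{\ge 0}^\Nis$; combined with (2), this identifies $\Sigma^\infty_{S^1} \Omega^\infty_\fr \KO \to \kw$ as an $\eta$-periodization.

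The main obstacle will be pinning down the framed recognition principle at the required level of generality over a Dedekind base, and carefully justifying that $\Omega_\Gm$ commutes with $\tau_{\ge 0}^\Nis$ on the objects of interest in parts (2) and (4); both ultimately reduce to the strict $\A^1$-invariance and $\Gm$-invariance of $\ul{W}$ encapsulated in Remark \ref{rmk:W-contraction}.
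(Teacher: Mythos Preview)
Your arguments for (2), (3), and the $\kw$ half of (1) are close to the paper's in spirit, but your treatment of $\tau_{\ge 0}^\Nis \omega^\infty_\fr \KO$ in (1) has a genuine gap. You propose to show it is motivically local by checking that \emph{every} homotopy sheaf of $\omega^\infty_\fr \KO$ is strictly $\A^1$-invariant, appealing to ``analogous results for $\ul{\GW}$ and algebraic $K$-theory''. Over a field this would be Morel's theorem, but over a Dedekind base $D$ no such blanket statement is available; in fact, producing strictly $\A^1$-invariant sheaves over $D$ is precisely what the paper works hard to do for $\ul{W}$ and $\ul{I}^n$ in \S\ref{sec:In}. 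The nonnegative homotopy sheaves of $\KO$ (higher Grothendieck--Witt sheaves) are not treated anywhere in the paper, and I don't think you can simply import their strict $\A^1$-invariance from the literature. The same gap reappears in your argument for (4), where commuting $\tau_{\ge 0}^\Nis$ past $\Omega_\Gm$ on $\omega^\infty_\fr \KO$ would again require control of these unknown sheaves.

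The paper avoids this entirely by the following observation: the \emph{negative} homotopy sheaves (in weight $0$) of $\KO$ and $\KW$ coincide \cite[Proposition 6.3]{schlichting2016hermitian}, so $\tau_{<0}^\Nis \omega^\infty_\fr \KO \wequi \tau_{<0}^\Nis \omega^\infty_\fr \KW$, and by Remark~\ref{rmk:KW-htpy} this common object is built entirely out of shifts of $\HW$. Since $\HW$ is motivically local and $\eta$-periodic (Lemma~\ref{HW-mot-loc}), so is $\tau_{<0}^\Nis \omega^\infty_\fr \KO$; motivic locality of $\tau_{\ge 0}^\Nis \omega^\infty_\fr \KO$ then follows by two-out-of-three from that of $\omega^\infty_\fr \KO$ itself. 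The same trick dispatches (2) and (4): for (4), once $\omega^\infty_\fr \KO \to \omega^\infty_\fr \KW$ is known to be an $\eta$-periodization, it suffices that the induced map on $\tau_{<0}^\Nis$ is one, and this map is an equivalence of already $\eta$-periodic objects. Thus only the strict $\A^1$-invariance and $\Gm$-invariance of $\ul{W}$ are ever needed. One minor further point on (3): the identity you state should read $\Sigma^\infty_{S^1}\Omega^\infty_{S^1} \wequi L_\mot \tau_{\ge 0}^\Nis$ (this is what Remark~\ref{rmk:Sigma-infty-S1} gives); the $L_\mot$ only drops out because (1) has already been established.
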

\begin{proof}
(1, 2) Since the negative homotopy sheaves (in weight $0$) of $\KO$ and $\KW$ coincide \cite[Proposition 6.3]{schlichting2016hermitian}, we have \[ \tau_{< 0}^\Nis \omega^\infty_\fr \KO \wequi \tau_{< 0}^\Nis \omega^\infty_\fr \KW =: E. \]
Since $\omega^\infty \KO$ and $\omega^\infty_\fr \KW$ are motivically local, and $\omega^\infty_\fr \KW$ is $\eta$-periodic, it thus suffices to show that $E$ is motivically local and $\eta$-periodic.
Since motivically local, $\eta$-periodic spectral presheaves are closed under limits and colimits, by Remark \ref{rmk:KW-htpy} it thus suffices to show that $\tau^\Nis_{=0} \omega^\infty_\fr \KW \wequi \HW$ is motivically local and $\eta$-periodic.
This is Lemma \ref{HW-mot-loc}.

(3) It follows from Remark \ref{rmk:Sigma-infty-S1} that the functor $\Sigma^\infty_{S^1} \Omega^\infty_{S^1}$ sends the spectral sheaf $E$ on $\Cor^\fr(D)$ to $L_\mot E'$, where $E'(X) \wequi E(X)_{\ge 0}$.
We thus obtain $\Sigma^\infty_{S^1} \Omega^\infty_{S^1} \wequi L_\mot \tau_{\ge 0}^\Nis$.
Since $\Omega^\infty_\fr \wequi \Omega^\infty_{S^1} \circ \omega^\infty_\fr$, the result follows.

(4) $\KO \to \KW \in \SH(S)$ is an $\eta$-periodization.
It follows from \cite[Lemma 3.3]{hoyois2016equivariant} and \cite[Lemma 12.1]{bachmann-norms}\NB{details?} that $\omega^\infty_\fr$ preserves $\eta$-periodizations.
Hence $\omega^\infty_\fr \KO \to \omega^\infty_\fr \KW$ is an $\eta$-periodization.
Thus it suffices to show that \[ \tau^\Nis_{< 0} \omega^\infty_\fr \KO \to \tau^\Nis_{< 0} \omega^\infty_\fr \KW \] is an $\eta$-periodization.
Since this is an equivalence of $\eta$-periodic objects (see the proof of (1,2)), this is clear.
\end{proof}

Thus in particular $\kw$ defines an object of $\SH(D)[\eta^{-1}]$.

\section{Main results} \label{sec:main}
\subsection{}
Fix a Dedekind scheme $D$ with $1/2 \in D$.
In the previous two sections we have defined ring spectra $\ul{K}^W, \ul{k}^M$ and $\HW$ (and $\kw$) in $\SH(D)$.
\begin{definition}
Define commutative ring spectra $\HZW$ and $\HtildeZ$ in $\SH(D)$ as pullbacks in the following diagram with cartesian squares
\begin{equation*}
\begin{CD}
\HtildeZ @>>> \HZW @>>> \ul{K}^W \\
@VVV         @VVV       @VVV \\
\HZ @>>> \HZ/2 @>>> \ul{k}^M.
\end{CD}
\end{equation*}
\end{definition}

\begin{definition}
Put $\ko^\fr := \Sigma^\infty_\fr \Omega^\infty_\fr \KO$.
\end{definition}

\begin{lemma} \label{lemm:defn-compat}
If $D$ is the spectrum of a field, then $\ul{K}^W, \ul{k}^M, \HW, \HZW, \HtildeZ$ and $\kw$ coincide with their usual definitions, and $\ko^\fr \wequi \ko$.
\end{lemma}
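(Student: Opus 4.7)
The plan is to verify each spectrum in turn, using as a uniform tool Morel's theorem that over a field $k$ the homotopy $t$-structure on $\SH(k)$ is compatible with the Nisnevich $t$-structure on framed spectral sheaves via $\omega^\infty_\fr$. Concretely, for $E \in \SH(k)$ the Nisnevich truncations $\tau^\Nis_{\ge n} \omega^\infty_\fr E$ and $\tau^\Nis_{\le n} \omega^\infty_\fr E$ agree with $\omega^\infty_\fr \tau_{\ge n} E$ and $\omega^\infty_\fr \tau_{\le n} E$; combined with conservativity of $\omega^\infty_\fr$, this reads off $\kw = \tau_{\ge 0}^\Nis \omega^\infty_\fr \KW$ (Definition \ref{def:kw}) and $\HW = \tau_{=0}^\Nis \omega^\infty_\fr \KW$ (Definition \ref{def:HW}) as $\tau_{\ge 0} \KW$ and $\tau_{\le 0} \kw$ respectively, matching the formulas from the introduction.

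For the generalized cohomology spectra, Lemma \ref{lemm:kM-mot-local} already identifies $\ul{k}^M$ with $\tau_{\le 0}(\HZ/2)$, which over a field is the usual mod-$2$ Milnor $K$-theory spectrum of \cite{ananyevskiy2017very}. By Remark \ref{rmk:W-contraction} we have $\ul{\pi}_0(\ul{K}^W)_* = \ul{I}^*$ with vanishing higher homotopy sheaves; since over a field $\SH(k)^\heart$ is the category of homotopy modules (Morel), $\ul{K}^W$ is pinned down by these sheaves and agrees with the generalized motivic cohomology spectrum of \cite{bachmann-very-effective}. The defining pullback square for $\HZW$ and $\HtildeZ$ then has its right column $\ul{K}^W \to \ul{k}^M$ identified with the literature and its left column $\HZ \to \HZ/2$ given by Spitzweck's standard construction, so the pullbacks reproduce the standard definitions of \cite{bachmann-very-effective, Deglise17, A1-alg-top} over a field.

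Finally, for $\ko^\fr$ the framed recognition principle (e.g.\ \cite[Theorem 18]{hoyois2018localization}, \cite{EHKSY}) combined with Morel's connectivity theorem identifies the endofunctor $\Sigma^\infty_\fr \Omega^\infty_\fr$ of $\SH(k) \wequi \SH^\fr(k)$ with the connective cover $\tau_{\ge 0}$, so $\ko^\fr = \tau_{\ge 0} \KO = \ko$. The main obstacle I anticipate is reconciling conventions across sources for effective versus very effective covers in the definitions of $\HZW$ and $\HtildeZ$, but this is essentially bookkeeping: the homotopy sheaf calculations from Remark \ref{rmk:W-contraction} and Theorem \ref{thm:jac1} pin the objects down uniquely in the heart of the relevant $t$-structure.
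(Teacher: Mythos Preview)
There is a genuine error in your treatment of $\ko^\fr$. You claim that the framed recognition principle identifies $\Sigma^\infty_\fr \Omega^\infty_\fr$ with the connective cover $\tau_{\ge 0}$ in the homotopy $t$-structure, and conclude $\ko^\fr = \tau_{\ge 0}\KO = \ko$. Both equalities are false. Over a perfect field the recognition principle gives $\Spc^\fr(k)^\gp \simeq \SH(k)^{\veff}$ \cite[Theorem 3.5.14(i)]{EHKSY}, so $\Sigma^\infty_\fr \Omega^\infty_\fr$ computes the \emph{very effective} cover, not $\tau_{\ge 0}$; and $\ko$ is defined in \cite{ananyevskiy2017very} precisely as the very effective cover of $\KO$. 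The subcategory $\SH(k)^{\veff}$ is strictly smaller than $\SH(k)_{\ge 0}$ (the latter is generated under colimits and extensions by $\Sigma^\infty_+ X \wedge \Gmp{j}$ for all $j \in \Z$, the former only by $j \ge 0$), and $\tau_{\ge 0}\KO$ is not very effective. The paper's proof uses exactly the very-effective identification, after first reducing to a perfect field via stability of $\ko^\fr$ under base change and of very effective covers under pro-smooth base change.

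A second gap: the assertion that $\omega^\infty_\fr$ is conservative is false on all of $\SH(k)$, since $\omega^\infty_\fr$ only records weight-$0$ homotopy sheaves, and a homotopy module $M_*$ with $M_0 = 0$ need not vanish (contraction determines $M_{j}$ for $j<0$ from $M_0$, but not $M_j$ for $j>0$). Your sketch for $\kw$ and $\HW$ can be repaired because these objects are $\eta$-periodic and the restriction of $\omega^\infty_\fr$ to $\SH(k)[\eta^{-1}]$ is an equivalence onto $\SHSfr(k)[\eta^{-1}]$; but this should be said explicitly rather than appealing to a nonexistent global conservativity. Finally, for $\HZW$ and $\HtildeZ$ you assert that since the corners of the defining squares match the literature the pullbacks do too, but this is the actual content to be proved: the literature definitions are not a priori given as these particular pullbacks, and the paper has to invoke \cite[Theorem 17]{bachmann-very-effective} for $\HZW$ and a separate cartesian-square comparison through $\ul{K}^{MW}$ and $\ul{K}^M$ (using \cite[Theorem 5.3]{morel2004puissances}) for $\HtildeZ$.
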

Over a field, the spectra $\ul{K}^W, \ul{k}^M$ are defined in \cite[Example 3.33]{A1-alg-top}, the spectra $\HZW, \HtildeZ$ are defined in \cite[Notation p. 12]{bachmann-very-effective} (another definition of $\HtildeZ$ was given in \cite{Deglise17}, the two definitions are shown to coincide in \cite{bachmann-criterion}), the spectrum $\ko$ is defined in \cite{ananyevskiy2017very}, and the spectra $\HW, \kw$ are defined in \cite[\S6.3.2]{bachmann-eta}.
\begin{proof}
We first treat $\ko$.
Since very effective covers are stable under pro-smooth base change \cite[Lemma B.1]{bachmann-norms}, and $\ko^\fr$ is stable under arbitrary base change (see the proof of Theorem \ref{thm:main-1} below), we may assume that the base field is perfect.
In this case $\Spc^\fr(k)^\gp \wequi \SH(k)^\veff$ \cite[Theorem 3.5.14(i)]{EHKSY} and hence $\Sigma^\infty_\fr\Omega^\infty_\fr$ coincides with the very effective cover functor.
Thus $\ko \wequi \ko^\fr$ as needed.

The claim for $\kw$ follows via \cite[Lemma 6.9]{bachmann-eta} and Lemma \ref{lemm:identify-kw}(4).

For $\ul{K}^W, \ul{k}^M$, the claim is true essentially by construction.
This implies the claim for $\HW$ via Lemma \ref{lemm:identify-HW}.

The claim for $\HZW$ now follows from \cite[Theorem 17]{bachmann-very-effective} (see also \cite[(6.5)]{bachmann-eta}).
For $\HtildeZ$ consider the commutative diagram
\begin{equation*}
\begin{CD}
E @>>> \ul{K}^{MW} @>>> \ul{K}^W \\
@VVV     @VVV           @VVV     \\
\HZ @>>> \ul{K}^M @>>> \ul{k}^M.
\end{CD}
\end{equation*}
The left hand square is defined to be cartesian, so that by \cite[Theorem 17]{bachmann-very-effective}, $E$ coincides with the usual definition of $\HtildeZ$.
The right hand square is cartesian by \cite[Theorem 5.3]{morel2004puissances}.
Hence the outer rectangle is cartesian and $E \wequi \HtildeZ$ as defined above.
This concludes the proof.
\end{proof}

\begin{lemma} \label{lemm:kw-HW}
Over a Dedekind scheme containing $1/2$ we have $\ul{\pi}_*(\kw) \wequi \ul{W}[\beta]$ and $\kw/\beta \wequi \HW$.
\end{lemma}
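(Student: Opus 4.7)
The plan is to read off both claims by direct homotopy-sheaf computation in $\Shv^\Nis_\SH(\Cor^\fr(D))$, using only the definition $\kw = \tau_{\ge 0}^\Nis \omega_\fr^\infty \KW$ (Definition \ref{def:kw}) and the description of the homotopy sheaves of $\omega_\fr^\infty \KW$ recorded in Remark \ref{rmk:KW-htpy}.

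For the first statement, I would first note that by Remark \ref{rmk:KW-htpy} the homotopy sheaves $\ul\pi_n \omega_\fr^\infty \KW$ vanish unless $4 \mid n$, and in degree $4i$ they are identified with $\HW$ via multiplication by $\beta^i$. Since $\HW = \tau_{=0}^\Nis \omega_\fr^\infty \KW$ is itself concentrated in degree $0$ with underlying sheaf $\ul{W}$ (using the forgetful description at the start of \S\ref{subsec:KW-HW}), this yields $\ul\pi_n(\omega_\fr^\infty \KW) \wequi \ul{W}$ for $n \in 4\Z$ and zero otherwise. Applying $\tau_{\ge 0}^\Nis$ kills the negative homotopy sheaves, giving $\ul\pi_n(\kw) \wequi \ul{W}$ for $n = 4k$ with $k \ge 0$, and zero otherwise. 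The ring structure on $\omega_\fr^\infty \KW$ descends to $\kw$ (truncation is lax monoidal on connective objects), and $\beta^i$ acts as the identification above; this gives $\ul\pi_*(\kw) \wequi \ul{W}[\beta]$ as claimed.

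For the second statement, I would start from the Bott periodicity equivalence $\beta: \Sigma^4 \KW \xrightarrow{\wequi} \KW$ in $\SH(D)$, which $\omega_\fr^\infty$ carries to an equivalence $\Sigma^4 \omega_\fr^\infty \KW \xrightarrow{\wequi} \omega_\fr^\infty \KW$. Since $\Sigma^4$ is $t$-exact, this equivalence identifies $\Sigma^4 \kw \wequi \tau_{\ge 4}^\Nis \omega_\fr^\infty \KW$ with the canonical inclusion $\tau_{\ge 4}^\Nis \omega_\fr^\infty \KW \hookrightarrow \tau_{\ge 0}^\Nis \omega_\fr^\infty \KW = \kw$ via $\beta$. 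Its cofiber is $\tau_{[0,3]}^\Nis \omega_\fr^\infty \KW$, which by the vanishing in Remark \ref{rmk:KW-htpy} coincides with $\tau_{=0}^\Nis \omega_\fr^\infty \KW = \HW$. This gives $\kw/\beta \wequi \HW$.

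I do not anticipate serious obstacles: both statements follow essentially formally from Remark \ref{rmk:KW-htpy} once the definitions are unwound. The only mild verification is that the Bott equivalence on $\omega_\fr^\infty \KW$ restricts, under the identification $\Sigma^4 \kw \wequi \tau_{\ge 4}^\Nis \omega_\fr^\infty \KW$, to the tautological inclusion into $\kw$, which is automatic from the naturality of $\tau_{\ge 0}^\Nis$ and the $t$-exactness of suspension.
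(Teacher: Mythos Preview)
Your proposal is correct and essentially matches the paper's own proof, which simply says ``Immediate from Remarks \ref{rmk:KW-htpy} and \ref{rmk:htpy-sheaves-conservative}.'' You have unpacked exactly what that sentence means: Remark \ref{rmk:KW-htpy} gives the homotopy sheaves of $\omega_\fr^\infty \KW$, truncation yields those of $\kw$, and the cofiber of $\beta$ is then identified with $\HW$. The only minor difference is that for $\kw/\beta \wequi \HW$ you argue directly via the $t$-structure (identifying the cofiber with $\tau_{[0,3]}^\Nis \omega_\fr^\infty \KW = \HW$), whereas the paper's reference to Remark \ref{rmk:htpy-sheaves-conservative} suggests concluding from the fact that the map induces an isomorphism on homotopy sheaves together with non-degeneracy; both are equally immediate.
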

\begin{proof}
Immediate from Remarks \ref{rmk:KW-htpy} and \ref{rmk:htpy-sheaves-conservative}.
\end{proof}

The following is one of our main results.
\begin{theorem} \label{thm:main-1}
The spectra $\ul{K}^W, \ul{k}^M, \HW, \HZW, \HtildeZ, \ko^\fr$ and $\kw$ are stable under base change among Dedekind schemes containing $1/2$.
\end{theorem}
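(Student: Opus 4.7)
The plan is to propagate stability under base change through the chain of constructions in Sections~\ref{sec:In}--\ref{sec:eta-periodic-framed}, starting from two external inputs: $\KO$ is stable (being built from orthogonal Grassmannians \cite{schlichting2015geometric,panin2010motivic}), and $\HZ$, $\HZ/2$ are stable by Spitzweck \cite{spitzweck2012commutative}. The organizing observation is that each operation used to construct our spectra from these inputs---the adjunction $\Sigma^\infty_\fr \dashv \Omega^\infty_\fr$ (and its $S^1$-variant), the equivalence $\SH \simeq \SH^\fr$ of \cite{hoyois2018localization}, $\eta$-periodization, cofibers of globally defined classes like $\beta$ and $\tau$, levelwise truncation of $\Gm$-prespectra, and finite limits---is compatible with pullback $f^*$.

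Concretely, I would first treat $\ko^\fr = \Sigma^\infty_\fr \Omega^\infty_\fr \KO$: stability of $\KO$ transfers to $\omega^\infty_\fr \KO \in \SH^\fr$ under the equivalence, and then $\Sigma^\infty_{S^1}$ and $\Omega^\infty_{S^1}$ both commute with $f^*$. By Lemma~\ref{lemm:identify-kw}(4), $\kw$ is the $\eta$-periodization of $\Sigma^\infty_{S^1}\Omega^\infty_\fr \KO$, an operation preserved by the symmetric monoidal $f^*$; hence $\kw$ is stable. Then $\HW = \kw/\beta$ by Lemma~\ref{lemm:kw-HW} with $\beta$ globally defined, so $\HW$ is stable. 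On the other arm, $\ul k^M$ inherits stability from $\HZ/2$: by Lemma~\ref{lemm:kM-mot-local} it is realized as the levelwise $\tau_{\le 0}^\Nis$-truncation of Spitzweck's specific $\Gm$-prespectrum model for $\HZ/2$, both operations being functorial in the base.

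The main obstacle is $\ul K^W$, which is not $\eta$-periodic and thus cannot be accessed purely in $\SH[\eta^{-1}]$. Here I would exploit Lemma~\ref{lemm:identify-HW}: $\ul K^W[\eta^{-1}] \simeq \HW$ and $\ul K^W/\eta \simeq \ul k^M$, both already known to be stable. Since $\ul K^W$ has homotopy sheaves concentrated in spectral degree zero with $\ul\pi_{0,n} = \ul I^n$ (Remark~\ref{rmk:W-contraction}), the base change map $f^* \ul K^W_D \to \ul K^W_{D'}$ is an equivalence iff it induces an isomorphism on each $\ul I^n$, reducing the question to Nisnevich-sheaf-level stability of $\ul I^n$. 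That in turn follows by descending induction in $n$: Proposition~\ref{prop:jac2} identifies $\ul I^n$ with $a_\ret \Z$ for $n$ large, which is stable via the motivic spectrum $H_{\A^1}\Z[\rho^{-1}]$ of \cite[Proposition~41]{bachmann-real-etale}; and the exact sequence $\ul I^{n+1} \to \ul I^n \to \ul k^M_n$ of Theorem~\ref{thm:jac1}, combined with stability of $\ul k^M$, propagates downward. Finally, $\HZW$ and $\HtildeZ$ are stable because their defining pullback squares involve only spectra already handled and $f^*$ preserves finite limits.
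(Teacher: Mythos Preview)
Your overall architecture matches the paper's proof: $\ko^\fr$ first via Grassmannians, then $\kw \wequi \ko^\fr[\eta^{-1}]$ via Lemma~\ref{lemm:identify-kw}, then $\HW \wequi \kw/\beta$ via Lemma~\ref{lemm:kw-HW}, then $\ul k^M$ from $\HZ/2$, and finally $\HZW,\HtildeZ$ from the defining pullback squares. The only substantive divergence is your treatment of $\ul K^W$.

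There your first sentence already contains a complete argument, and it is exactly the paper's: once $\ul K^W[\eta^{-1}] \wequi \HW$ and $\ul K^W/\eta \wequi \ul k^M$ are both known to be stable (Lemma~\ref{lemm:identify-HW}), the cofiber $C$ of the base change map $f^*\ul K^W_D \to \ul K^W_{D'}$ satisfies $C[\eta^{-1}]=0$ and $C/\eta=0$, hence $C=0$. The paper invokes this fracture principle (citing \cite[Lemma~2.16]{bachmann-eta}) and stops. Your subsequent detour through sheaf-level stability of $\ul I^n$ is not needed and, as written, has gaps. First, the reduction ``the base change map is an equivalence iff it induces an isomorphism on each $\ul I^n$'' presumes that $\ul\pi_i(f^*\ul K^W_D)_n$ vanishes for $i\ne 0$ and equals $f^*\ul I^n$ for $i=0$; but $f^*$ on $\SH$ does not commute with taking homotopy sheaves for general (non--pro-smooth) $f$ between Dedekind schemes, so this is not automatic. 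Second, your induction base via Proposition~\ref{prop:jac2} requires $\vcd_2(K(D'))<\infty$, which can fail; the filtered-limit reduction used in the proof of Corollary~\ref{cor:cons-K-W} to handle this is absent from your sketch.

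A smaller point on $\ko^\fr$: the paper argues at the level of motivic \emph{spaces}---$\Omega^\infty\KO$ is the orthogonal Grassmannian, and the forgetful functor $\Spc^\fr\to\Spc$ commutes with base change \cite[Lemma~16]{hoyois2018localization}---rather than asserting that the right adjoint $\omega^\infty_\fr$ commutes with $f^*$, which is what your phrasing requires and which is not automatic.
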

\begin{proof}
For $\ko^\fr$ this follows from the facts that (1) $\Omega^\infty \KO \in \Spc(D)$ is motivically equivalent to the orthogonal Grassmannian \cite[Theorem 1.1]{schlichting2015geometric}, which is stable under under base change, and that (2) the forgetful functor $\Spc^\fr(D) \to \Spc(D)$ commutes with base change \cite[Lemma 16]{hoyois2018localization}.
The case of $\kw$ follows from this and Lemma \ref{lemm:identify-kw}, which shows that $\kw \wequi \ko^\fr[\eta^{-1}]$.
The case of $\HW$ now follows from Lemma \ref{lemm:kw-HW}.

The spectra $\HZ$ and $\HZ/2$ are stable under base change \cite{spitzweck2012commutative}, and hence so is $\ul{k}^M \wequi (\HZ/2)/\tau$ (see Lemma \ref{lemm:kM-mot-local}).
To show that $\ul{K}^W$ is stable under base change it suffices to show the same about $\ul{K}^W[\eta^{-1}]$ and $\ul{K}^W/\eta$ (see e.g. \cite[Lemma 2.16]{bachmann-eta}).
In light of Lemma \ref{lemm:identify-HW}, this thus follows from the cases of $\HW$ and $\ul{k}^M$, which we have already established.

Finally the stability under base change of $\HZW$ and $\HtildeZ$ reduces by definition to the same claim about $\HZ, \HZ/2, \ul{k}^M$ and $\ul{K}^W$, which we have aready dealt with.

This concludes the proof.
\end{proof}

\begin{definition}
Let $S$ be a scheme with $1/2 \in S$.
Define spectra $\ul{K}^W, \ul{k}^M, \HW, \HZW, \HtildeZ, \ko^\fr, \kw \in \SH(S)$ by pullback along the unique map $S \to \Spec(\Z[1/2])$.
\end{definition}
\begin{remark} \label{rmk:spectra-general}
\begin{enumerate}
\item By Theorem \ref{thm:main-1}, if $S$ is a Dedekind scheme, the new and old definitions agree.
\item The proof of Theorem \ref{thm:main-1} shows that for $S$ regular we have $\ko^\fr_S \wequi \Sigma^\infty_\fr \Omega^\infty_\fr \KO_S$ and hence $\kw_S \wequi (\Sigma^\infty_\fr \Omega^\infty_\fr \KO_S)[\eta^{-1}]$.
\item Since formation of homotopy sheaves is compatible with essentially smooth base change, and so is $\ul{W}$ (see e.g. \cite[Corollary 51]{bachmann-gwtimes}), we find that if $S \to D$ is essentially smooth then $\ul{\pi}_*(\kw_S) \wequi \ul{W}[\beta]$, and similarly for the homotopy sheaves of the other spectra.
\end{enumerate}
\end{remark}

\begin{lemma} \label{lemm:neg-crit}
Suppose that $S$ has finite dimension.
Let $E \in \SH(S)$ (respectively $E \in \SH(S)^\eff$).
Then $E \in \SH(S)_{\le 0}$ (respectively $E \in \SH(S)^\eff_{\le 0}$) if and only if $\ul{\pi}_i(E)_* = 0$ for $i > 0$ and $* \in \Z$ (respectively $i > 0$ and $*=0$).
\end{lemma}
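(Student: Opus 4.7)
The plan is to argue both implications from the definition of the homotopy $t$-structure together with the non-degeneracy statement recalled in Remark \ref{rmk:htpy-sheaves-conservative}.

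For the ``only if'' direction, recall that $\SH(S)_{\ge 0}$ is by construction generated under colimits and extensions by the spectra of the form $\Sigma^j \Sigma^\infty_{\P^1}(X_+) \wedge \Gm^{\wedge q}$ with $X \in \Sm_S$, $j \ge 0$, $q \in \Z$ (and only $q = 0$ in the effective variant). Hence $E \in \SH(S)_{\le 0}$ is equivalent to $[\Sigma^{i+1}\Sigma^\infty_{\P^1} X_+ \wedge \Gm^{\wedge q}, E] = 0$ for every $i \ge 0$, every $X \in \Sm_S$ and every $q \in \Z$. Since $E$ is Nisnevich local, this group is exactly $\ul\pi_{i+1}(E)_{q}(X)$, so this direction is immediate.

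For the converse, suppose $\ul\pi_i(E)_* = 0$ for all $i > 0$ (with $* \in \Z$ in the general case and $*=0$ in the effective case, matching the set of available generators). Consider the canonical fiber sequence
\[ F := \tau_{\ge 1} E \to E \to \tau_{\le 0} E. \]
The long exact sequence of homotopy sheaves together with $\ul\pi_i(\tau_{\le 0} E)_* = 0$ for $i \ge 1$ (by the direction already proved) shows that $\ul\pi_i(F)_* \cong \ul\pi_i(E)_* = 0$ for $i \ge 1$, while $\ul\pi_i(F)_* = 0$ for $i \le 0$ because $F \in \SH(S)_{\ge 1}$. Thus \emph{all} homotopy sheaves (in the relevant weights) of $F$ vanish. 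An easy induction on $n$ then shows $F \in \SH(S)_{\ge n}$ for every $n \ge 1$: assuming $F \in \SH(S)_{\ge n}$, the truncation triangle $\tau_{\ge n+1} F \to F \to \tau_{=n} F$ has $\tau_{=n} F$ determined by $\ul\pi_n(F)_* = 0$, hence $F \wequi \tau_{\ge n+1} F$.

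Since $S$ has finite Krull dimension, the homotopy $t$-structure on $\SH(S)$ is non-degenerate by Remark \ref{rmk:htpy-sheaves-conservative}, i.e.\ $\bigcap_n \SH(S)_{\ge n} = 0$; the same holds in $\SH(S)^\eff$ by the analogous argument in \cite[Proposition A.3]{bachmann-norms}. We conclude $F = 0$ and therefore $E \wequi \tau_{\le 0} E \in \SH(S)_{\le 0}$, as desired. The only mildly delicate point is verifying that the chosen generators of $\SH(S)_{\ge 0}$ (respectively $\SH(S)^\eff_{\ge 0}$) are exactly what makes the relevant collection of homotopy sheaves detect membership in $\SH(S)_{\le 0}$; in the effective setting, absence of negative $\Gm$-twists is precisely what restricts the hypothesis to weight $0$.
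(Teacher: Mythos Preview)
Your ``only if'' direction is essentially correct, though the sentence ``this group is exactly $\ul\pi_{i+1}(E)_q(X)$'' conflates the presheaf with its sheafification; what you actually have is that the presheaf vanishes, hence so does the sheaf.

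The ``if'' direction, however, has a genuine gap. You assert that $\ul\pi_i(F)_* = 0$ for $i \le 0$ ``because $F \in \SH(S)_{\ge 1}$''. This is precisely the statement that fails over bases of positive dimension --- see the Remark immediately following the lemma in the paper: objects of $\SH(S)_{\ge 0}$ need \emph{not} have vanishing negative homotopy sheaves. Without this, you cannot conclude that all homotopy sheaves of $F$ vanish, and the inductive step ``$\tau_{=n}F$ is determined by $\ul\pi_n(F)_*$'' becomes problematic as well: from the fiber sequence $\tau_{\ge n+1}F \to F \to \tau_{=n}F$ the long exact sequence only gives an injection $\ul\pi_n(\tau_{=n}F)_* \hookrightarrow \ul\pi_{n-1}(\tau_{\ge n+1}F)_*$, and the target is not known to vanish. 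Relatedly, Remark \ref{rmk:htpy-sheaves-conservative} is a statement about the $t$-structure on $\Shv^\Nis_\SH(\Sm_S)$, not about the homotopy $t$-structure on $\SH(S)$; invoking it as ``the homotopy $t$-structure on $\SH(S)$ is non-degenerate'' is a misapplication.

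The paper's proof sidesteps all of this by never passing through $\tau_{\ge 1}E$ in $\SH(S)$. Instead it verifies the defining orthogonality $\Map(\Sigma g, E) = *$ directly for the generators $g = \Sigma^\infty_+ X \wedge \Gmp{i}$, which amounts to showing that the Nisnevich sheaf of spaces $\Omega^\infty(\Gmp{i}\wedge E)$ (more precisely its loop space) is trivial. Its homotopy sheaves are the $\ul\pi_{>0}(E)_*$, which vanish by hypothesis, and now one is working in $\Shv^\Nis(\Sm_S)$ where connectivity \emph{is} detected by homotopy sheaves; this is exactly what \cite[Proposition A.3]{bachmann-norms} (i.e.\ Remark \ref{rmk:htpy-sheaves-conservative}) provides for finite-dimensional $S$. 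In effect, the connectivity-detection step you tried to run in $\SH(S)$ works if and only if it is run in $\Shv^\Nis_\SH(\Sm_S)$ instead.
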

\begin{proof}
We give the proof for $\SH(S)$, the one for $\SH(S)^\eff$ is analogous.
Since by definition $\Sigma^i \Sigma^\infty_+ X \wedge \Gmp{j} \in \SH(S)_{>0}$ for $i>0, j \in \Z$, necessity is straightforward.
We now show sufficiency.
Let $\scr C \subset \SH(S)$ denote the subcategory on those spectra $F$ with $\Map(\Sigma F, E) = *$.
We need to show that $\SH(S)_{\ge 0} \subset \scr C$.
By definition, for this it is enough to show that (a) $\scr C$ is closed under colimits, (b) $\scr C$ is closed under extensions, and (c) $\Sigma^\infty_+ X \wedge \Gmp{i} \in \scr C$ for every $X \in \Sm_S$ and $i \in \Z$.
(a) is clear, and (b) follows from the fact that given any fiber sequence of spaces $* \to S \to *$ we must have $S \wequi *$.
To prove (c), it is enough to show that $\Omega^\infty(\Gmp{i} \wedge E) \wequi * \in \Shv_{\Spc}^\Nis(\Sm_S)$.
By \cite[Proposition A.3]{bachmann-norms}, this can be checked on homotopy sheaves.
\end{proof}
\begin{remark}
In contrast with the case of fields, if $S$ has positive dimension, then homotopy sheaves do not characterize $\SH(S)_{\ge 0}$.
That is, given $E \in \SH(S)_{\ge 0}$, it need not be the case that $\ul\pi_i(E)_* = 0$ for $i < 0$.\footnote{This was previously known as \emph{Morel's stable connectivity conjecture}.}
\end{remark}

We denote by $\tau_{\ge 0}, \tau_{\le 0}$ (respectively $\tau_{\ge 0}^\eff, \tau_{\le 0}^\eff$) the truncation functors for the homotopy $t$-structure on $\SH(S)$ (respectively $\SH(S)^\eff$) and $f_0$ for the effective cover functor; see e.g. \cite[\S B]{bachmann-norms} for a uniform treatment.
\begin{remark} \label{rmk:check-over-fields}
We will repeatedly use \cite[Proposition B.3]{bachmann-norms}, which states the following: if $E \in \SH(S)$, where $S$ is finite dimensional, then $E \in \SH(S)_{\ge 0}$ (respectively $\SH(S)^\eff, \SH(S)^\eff_{\ge 0}$, respectively $\{0\}$) if and only if for every point $s \in S$ with inclusion $i_s: s \hookrightarrow S$ we have $i_s^*(E) \in \SH(s)_{\ge 0}$ (respectively $i_s^*(E) \in \SH(s)^\eff, i_s^*(E) \in \SH(s)^\eff_{\ge 0}$, $i_s^*(E) \wequi 0$).
\end{remark}
\begin{corollary} \label{cor:main1}
Let $D$ be a Dedekind scheme containing $1/2$, and $S \to D$ pro-smooth (e.g. essentially smooth).
The canonical maps exhibit equivalences in $\SH(S)$
\begin{gather*}
\kw \wequi \tau_{\ge 0} \KW \\
\HW \wequi \tau_{\le 0} \1[\eta^{-1}] \wequi \tau_{\le 0} \kw \\
\HZW \wequi f_0 \HW \wequi f_0 \ul{K}^W \\
\HtildeZ \wequi \tau_{\le 0}^\eff \1 \wequi \tau_{\le 0}^\eff \ko^\fr \\
\ul{K}^W \wequi \tau_{\le 0} \HZW.
\end{gather*}
\end{corollary}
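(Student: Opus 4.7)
The plan is to verify each of the displayed equivalences by constructing a canonical comparison map and then checking that it realizes the claimed truncation. The main tools are Theorem \ref{thm:main-1} (stability of every spectrum in the list under base change, so in particular $i_s^* E_S \wequi E_{k(s)}$ for each residue field inclusion $i_s \colon \Spec k(s) \hookrightarrow S$), Remark \ref{rmk:check-over-fields} for checking connectivity, effectivity, and vanishing after such pullbacks, Lemma \ref{lemm:neg-crit} to characterize $\SH(S)_{\le n}$ by vanishing of positive homotopy sheaves, and Lemma \ref{lemm:defn-compat} which over each $k(s)$ identifies our spectra with their classical counterparts; for these, the desired truncation identifications are available in the literature.

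First I would handle $\kw \wequi \tau_{\ge 0} \KW$: the map is canonical from Definition \ref{def:kw}, and $\kw_S \in \SH(S)_{\ge 0}$ by Remark \ref{rmk:check-over-fields} since this is classical over each $k(s)$. To see it is the connective cover, Remark \ref{rmk:spectra-general}(3) gives $\ul{\pi}_*(\kw_S) \wequi \ul{W}[\beta]$ and $\ul{\pi}_*(\KW_S) \wequi \ul{W}[\beta^{\pm 1}]$, so $\cof(\kw \to \KW)$ has homotopy sheaves concentrated in negative degrees and lies in $\SH(S)_{\le -1}$ by Lemma \ref{lemm:neg-crit}. The equivalence $\HW \wequi \tau_{\le 0} \kw$ is analogous: Lemma \ref{lemm:neg-crit} places $\HW_S$ in $\SH(S)_{\le 0}$ (its only nonzero homotopy sheaf is $\ul{W}$ in degree zero), while $\fib(\kw \to \HW) \wequi \Sigma^4 \kw \in \SH(S)_{\ge 1}$ by Lemma \ref{lemm:kw-HW}. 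The equivalence $\ul{K}^W \wequi \tau_{\le 0} \HZW$ follows the same pattern, using Remark \ref{rmk:W-contraction} for the homotopy sheaves of $\ul{K}^W$ and the defining pullback square to identify $\fib(\HZW \to \ul{K}^W) \wequi \fib(\HZ/2 \to \ul{k}^M) \wequi \tau_{\ge 1}(\HZ/2) \in \SH(S)_{\ge 1}$.

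The remaining statements $\HW \wequi \tau_{\le 0} \1[\eta^{-1}]$, $\HZW \wequi f_0 \HW \wequi f_0 \ul{K}^W$, and $\HtildeZ \wequi \tau_{\le 0}^\eff \1 \wequi \tau_{\le 0}^\eff \ko^\fr$ fit the same template. The comparison maps arise either from unit maps ($\1 \to \HW$, which factors through $\1[\eta^{-1}]$ since $\HW$ is $\eta$-periodic by Lemma \ref{HW-mot-loc}; and $\ko^\fr \to \1$ by adjunction) or from the defining pullback squares. What remains in each case is to check that a certain fiber or cofiber is sufficiently connective, effective, or zero; by Remark \ref{rmk:check-over-fields} each such condition reduces to the analogous statement over the residue fields $k(s)$, where Theorem \ref{thm:main-1} and Lemma \ref{lemm:defn-compat} identify everything with its classical counterpart and the required identifications are known --- Morel's computation of $\tau_{\le 0} \1_k[\eta^{-1}]$ for the $\1[\eta^{-1}]$ statement, and \cite[Theorem 17]{bachmann-very-effective} together with \cite{ananyevskiy2017very} for the effective-cover statements. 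The main obstacle I expect is purely bookkeeping: one must arrange each comparison map so that its pullback to $k(s)$ agrees with the classical one, but this is automatic from the constructions in Sections \ref{sec:In} and \ref{sec:eta-periodic-framed} together with stability under base change.
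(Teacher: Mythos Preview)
Your overall strategy matches the paper's: exhibit the comparison map, place one side in the appropriate (co)connective subcategory via Lemma \ref{lemm:neg-crit} and explicit homotopy sheaf computations, and verify that the fiber or cofiber lies in the complementary subcategory by reducing to residue fields via Remark \ref{rmk:check-over-fields}, Theorem \ref{thm:main-1}, and Lemma \ref{lemm:defn-compat}. One organizational difference worth noting: the paper first reduces to $S=D$ using \cite[Lemma B.1]{bachmann-norms} (pro-smooth base change commutes with $\tau_{\ge 0}$, $\tau_{\le 0}$, $\tau^\eff_{\le 0}$, and $f_0$), which sidesteps the question of whether Lemma \ref{lemm:neg-crit}, Remark \ref{rmk:spectra-general}(3), and Lemma \ref{lemm:kw-HW} apply directly over a general pro-smooth $S$. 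Your arguments for $\kw$, $\HW$, $\HtildeZ$, and $\ul{K}^W$ are otherwise essentially the paper's (with slightly more explicit identifications of the fibers in two cases, which is fine).

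There is, however, a genuine gap in your treatment of the effective cover statements $\HZW \wequi f_0 \HW \wequi f_0 \ul{K}^W$. Your template says it suffices to check that some fiber or cofiber is ``connective, effective, or zero'', each of which is detectable over residue fields by Remark \ref{rmk:check-over-fields}. But exhibiting $E' \to E$ as an effective cover requires, in addition to $E'$ being effective, that the cofiber lie in the \emph{right orthogonal} of $\SH^\eff$, and this condition is not among those in Remark \ref{rmk:check-over-fields}; nor does $i_s^*$ commute with $f_0$ in general, so you cannot simply test the map $\HZW \to f_0\HW$ for being an equivalence over fields. The paper supplies a direct argument instead: the map $\ul{K}^W \to \HW$ is an isomorphism on $\ul\pi_*(\ph)_0$, giving $f_0\ul{K}^W \wequi f_0\HW$; then the defining pullback square reduces $f_0\HZW \wequi f_0\ul{K}^W$ to $f_0\HZ/2 \wequi f_0\ul{k}^M$, which follows from the cofiber sequence $\Sigma^{0,-1}\HZ/2 \xrightarrow{\tau} \HZ/2 \to \ul{k}^M$ together with $f_1\HZ/2 \wequi 0$ \cite[Theorem B.4]{bachmann-norms}.

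A minor slip: there is no adjunction map $\ko^\fr \to \1$; the unit goes the other way, $\1 \to \ko^\fr$, and the map one actually needs for the last line is $\ko^\fr \to \HtildeZ$.
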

\begin{proof}
By \cite[Lemma B.1]{bachmann-norms}, pro-smooth base change commutes with truncation in the homotopy $t$-structure, in the effective homotopy $t$-structure, and also with effective covers.
We may thus assume that $S=D$.

Consider the cofiber sequence $\kw \to \KW \to E$.
To prove that $\kw \wequi \tau_{\ge 0}\KW$, it is enough to show that $\kw \in \SH(D)_{\ge 0}$ and $E \in \SH(D)_{<0}$.
By Remark \ref{rmk:check-over-fields} we may check the first claim after base change to fields, and hence by Theorem \ref{thm:main-1} for this part we may assume that $D$ is the spectrum of a field, where the claim holds by definition.
The second claim follows via Lemma \ref{lemm:neg-crit} from our definition of $\kw$ as a connective cover in the Nisnevich topology (see Definition \ref{def:kw}).

Consider the fiber sequence $F \to \1[\eta^{-1}] \to \HW$.
To prove that $\tau_{\le 0} \1[\eta^{-1}] \wequi \HW$ it suffices to show that $F \in \SH(D)_{>0}$ and $\HW \in \SH(D)_{\le 0}$.
As before the first claim can be checked over fields where it holds by definition, and the second one follows from Lemma \ref{lemm:neg-crit} and the definition of $\HW$ (see Definition \ref{def:HW}).
The argument for $\tau_{\le 0} \kw \wequi \HW$ is similar.

Since the map $\ul{K}^W \to \HW$ of \S\ref{subsec:KW-HW} induces an isomorphism on $\ul{\pi}_*(\ph)_0$ (by construction), we have $f_0 \HW \wequi f_0 \ul{K}^W$.
We have $\HZW \in \SH(D)^\eff$ by checking over fields; it thus remains to show that $f_0 \HZW \wequi f_0 \ul{K}^W$.
By the defining fiber square, for this it is enough to show that $f_0 \HZ/2 \wequi f_0 \ul{k}^M$.
This follows from the cofibration sequence \[ \Sigma^{0,-1} \HZ/2 \xrightarrow{\tau} \HZ/2 \to \ul{k}^M \] together with the fact that $f_1 \HZ/2 \wequi 0$ (see e.g. \cite[Theorem B.4]{bachmann-norms}).

We have $\HtildeZ \in \SH(D)^\veff$ by checking over fields.
Knowledge of $\ul{\pi}_*(\HZ)_0 \wequi \Z$, $\ul{\pi}_*(\ul{k}^M)_0 = \Z/2$ and $\ul{\pi}_*(\ul{K}^W)_0 \wequi \ul{W}$ implies via Lemma \ref{lemm:neg-crit} that $\HtildeZ \in \SH(D)^\eff_{\le 0}$.
It hence remains to show that the fibers of $\1 \to \HtildeZ$ and $\ko^\fr \to \HtildeZ$ are in $\SH(D)^\eff_{>0}$, which may again be checked over fields.

Consider the fiber sequence $F \to \HZW \to \ul{K}^W$.
We need to show that $F \in \SH(D)_{\ge 1}$ and $\ul{K}^W \in \SH(D)_{\le 0}$.
As before the first claim can be checked over fields where it holds by \cite[Lemma 18]{bachmann-very-effective}, and the second claim follows from Lemma \ref{lemm:neg-crit} and the knowledge of the homotopy sheaves of $\ul{K}^W$, i.e. Remark \ref{rmk:W-contraction}.

This concludes the proof.
\end{proof}

\subsection{}
Recall from \cite[\S3]{bachmann-eta} the stable Adams operation $\adamspsi^3: \KO[1/3] \to \KO[1/3] \in \SH(D)$.
Via Corollary \ref{cor:main1} this induces $\adamspsi^3: \kw_{(2)} \to \kw_{(2)} \in \SH(D)$.

\begin{theorem} \label{thm:main2}
Let $D$ be a Dedekind scheme with $1/2 \in D$.
\begin{enumerate}
\item The map $\adamspsi^3 - \id: \kw_{(2)} \to \kw_{(2)}$ factors uniquely (up to homotopy) through $\beta: \Sigma^4 \kw_{(2)} \to \kw_{(2)}$, yielding \[ \adamsphi: \kw_{(2)} \to \Sigma^4 \kw_{(2)} \in \SH(D). \]
\item The unit map $\1 \to \kw_{(2)}$ factors uniquely (up to homotopy) through the fiber of $\adamsphi$.
\item The resulting sequence \[ \1[\eta^{-1}]_{(2)} \to \kw \xrightarrow{\adamsphi} \Sigma^4 \kw_{(2)} \in \SH(D) \] is a fiber sequence.
\end{enumerate}
In particular for any scheme $S$ with $1/2 \in S$ there is a canonical fiber sequence \[ 1[\eta^{-1}]_{(2)} \to \kw_{(2)} \xrightarrow{\adamsphi} \Sigma^4 \kw_{(2)} \in \SH(S). \]
\end{theorem}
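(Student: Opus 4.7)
The plan is to bootstrap Theorem \ref{thm:main2} from the field case (the main theorem of \cite{bachmann-eta}) using two ingredients: Theorem \ref{thm:main-1}, which ensures that $\kw_{(2)}$, $\HW_{(2)}$ and $\1[\eta^{-1}]_{(2)}$ are stable under base change along the residue-field inclusions $i_x: \Spec k(x) \hookrightarrow D$, and Remark \ref{rmk:check-over-fields}, which detects equivalences in $\SH(D)$ after such pullbacks. We also rely on naturality of $\adamspsi^3$ in the base, which implies $i_x^*(\adamspsi^3 - \id) \simeq \adamspsi^3_{k(x)} - \id$ in $\SH(k(x))$.

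For (1), the existence of $\adamsphi$ amounts to nullity of the composite $g := \pi \circ (\adamspsi^3 - \id): \kw_{(2)} \to \kw_{(2)}/\beta \simeq \HW_{(2)}$, the identification being Lemma \ref{lemm:kw-HW}. Since $\HW_{(2)}$ is an Eilenberg--MacLane spectrum for the sheaf $\ul W_{(2)}$ (Remark \ref{rmk:KW-htpy} together with Corollary \ref{cor:main1}), any self-map of $\HW_{(2)}$ is determined by its action on $\ul W_{(2)}$, and the classical fact that $\adamspsi^3$ acts trivially on $\ul W$ (checked locally on Nisnevich stalks of $D$, or equivalently over residue fields via Remark \ref{rmk:check-over-fields}) gives $(\adamspsi^3)_{\HW_{(2)}} = \id$. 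Naturality then yields $g = (\adamspsi^3 - \id)_{\HW_{(2)}} \circ \pi = 0$, producing $\adamsphi$. Uniqueness is handled by showing that the image of the connecting map $[\kw_{(2)}, \Sigma^{-1} \HW_{(2)}]_{\SH(D)} \to [\kw_{(2)}, \Sigma^4 \kw_{(2)}]_{\SH(D)}$ vanishes; this is reduced to the analogous vanishing over each residue field (provided by \cite{bachmann-eta}) by regarding any candidate map and the obstruction it produces as sections of an appropriate internal mapping spectrum whose vanishing is detectable on residue fields by Remark \ref{rmk:check-over-fields}.

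Parts (2) and (3) follow in the same spirit. For (2), the composite $\beta \circ \adamsphi \circ u = (\adamspsi^3 - \id) \circ u$ vanishes since $\adamspsi^3$ is a ring map fixing the unit $u: \1 \to \kw_{(2)}$, so $\adamsphi \circ u$ lies in the kernel of $\beta_*: [\1, \Sigma^4 \kw_{(2)}] \to [\1, \kw_{(2)}]$, which we show vanishes by the same obstruction-group reduction to fields; hence $\adamsphi \circ u \simeq 0$, yielding a factorization through $\fib(\adamsphi)$, unique by an analogous argument. For (3), the induced map $\1[\eta^{-1}]_{(2)} \to \fib(\adamsphi)$ is an equivalence iff it is so after pullback to each residue field (Remark \ref{rmk:check-over-fields}); the uniqueness established in (1) forces $i_x^* \adamsphi \simeq \adamsphi_{k(x)}$, so this reduces to the fiber sequence of \cite{bachmann-eta} over $k(x)$. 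The ``In particular'' clause is then immediate: pull back the sequence over $\Spec \Z[1/2]$ to any $S$ with $1/2 \in S$ and invoke Theorem \ref{thm:main-1} and Remark \ref{rmk:spectra-general} to identify the pullbacks with $\1[\eta^{-1}]_{(2), S}$ and $\kw_{(2), S}$.

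The main technical hurdle is the descent of the obstruction-group vanishings from residue fields to $D$; the load-bearing ingredients are stability under base change of $\kw_{(2)}$ and $\HW_{(2)}$, and conservativity of residue-field pullbacks on $\SH(D)$. The uniqueness clause in (1) is what allows the reductions in (2) and (3) to go through cleanly, by identifying pullbacks of $\adamsphi$ with their field-level counterparts.
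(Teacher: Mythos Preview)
Your argument for part (3) and the final clause is correct and matches the paper: once $\adamsphi$ exists and is suitably unique, the map $\1[\eta^{-1}]_{(2)} \to \fib(\adamsphi)$ is an equivalence iff it is so after pullback to residue fields, and the uniqueness in (1) identifies $i_x^*\adamsphi$ with the field-level $\adamsphi$.

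There is, however, a genuine gap in your treatment of (1) and (2). Remark~\ref{rmk:check-over-fields} detects vanishing of \emph{objects}, not of morphisms or of mapping groups. The assertion that a map such as $(\adamspsi^3)_{\HW_{(2)}} - \id$, or an obstruction class in $[\kw_{(2)},\Sigma^{-1}\HW_{(2)}]$, vanishes because its pullback to each $k(x)$ vanishes is not justified: two maps can become equal over every residue field without being equal over $D$. Your proposed workaround via an internal mapping spectrum does not repair this, because $i_x^*\iHom(\kw_{(2)},E)$ need not agree with $\iHom(i_x^*\kw_{(2)}, i_x^*E)$; this base-change identity would require $\kw_{(2)}$ to be dualizable or $i_x$ to be smooth, neither of which holds for closed points of a Dedekind scheme of positive dimension. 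The same objection applies to your claim that ``any self-map of $\HW_{(2)}$ is determined by its action on $\ul W_{(2)}$'': you have not established that $[\HW,\HW] \hookrightarrow \mathrm{End}(\ul W)$.

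The paper avoids this by a different reduction. Rather than trying to descend $[\kw,-]$ from fields, it first observes that the unit map $\1[\eta^{-1}] \to \kw$ is $1$-connected; this \emph{is} a statement about an object (the fiber lying in $\SH(D)_{\ge 2}$) and hence can be checked over residue fields via Remark~\ref{rmk:check-over-fields}. Combined with $\kw/\beta \simeq \HW \in \SH(D)_{\le 0}$ (Corollary~\ref{cor:main1}), this yields $[\kw,E] \simeq [\1,E]$ for the relevant coconnected targets $E$, reducing all obstructions to groups of the form $[\1,\Sigma^n\kw]$. These are then computed \emph{directly over $D$} using the descent spectral sequence, the known homotopy sheaves $\ul\pi_*(\kw) \simeq \ul W[\beta]$, and the fact that $\dim D \le 1$. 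No reduction of mapping groups to fields is attempted.
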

\begin{proof}
(1,2) We can repeat the arguments from \cite[Corollary 7.2]{bachmann-eta}.
It suffices to show that (a) $\1[\eta^{-1}] \to \kw$ is $1$-connected, (b) $\kw/\beta \in \SH(D)_{\le 0}$, and (c) $[\1, \Sigma^n \kw]_{\SH(D)} = 0$ for $n \in \{3,4\}$.
(a) can be checked over fields, hence holds by \cite[Lemma 7.1]{bachmann-eta}.
(b) follows from Lemma \ref{lemm:kw-HW} (showing that $\kw/\beta \wequi \HW$) and Corollary \ref{cor:main1} (showing that $\HW \in \SH(D)_{\le 0}$).
(c) follows from knowledge of the homotopy sheaves of $\kw$ together with the descent spectral sequence, using that $\dim D \le 1$.

(3) We need to show that the (unique) map $\1[\eta^{-1}] \to \fib(\adamsphi)$ is an equivalence.
This can be checked over fields, where it is \cite[Theorem 7.8]{bachmann-eta}.

The last claim follows by pullback along $S \to \Spec(\Z[1/2])$.
\end{proof}

\section{Applications} \label{sec:app}
Throughout we assume that $2$ is invertible on all schemes.
We shall employ the special linear and symplectic cobordism spectra $\MSL$ and $\MSp$; see e.g. \cite[Example 16.22]{bachmann-norms} for a definition.

\subsection{}
Recall that $\kw_* \MSL \wequi \kw_*[e_2, e_4, \dots]$ with $|e_{2i}| = 4i$ \cite[Theorem 4.1]{bachmann-eta}.
The canonical orientation $\MSL \to \kw$ induces $\MSL \to \kw \to \tau_{\le 0} \kw \wequi \HW$ and hence \[ \kw_* \MSL \to \kw_* \kw \to \kw_* \HW. \]
\begin{proposition}
The images of the $e_{2i}$ induce equivalences of right modules \[ \kw \wedge \kw_{(2)} \wequi \bigvee_{i \ge 0} \kw_{(2)}\{e_{2i}\} \quad\text{and}\quad \kw \wedge \HW_{(2)} \wequi \bigvee_{i \ge 0} \HW_{(2)}\{e_{2i}\}. \]
\end{proposition}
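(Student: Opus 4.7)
The plan is to construct the comparison maps out of the classes $e_{2i}$ and then reduce the verification that they are equivalences to the known field case.

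For each $i \ge 0$, let $\bar e_{2i} \in \kw_{4i}\kw$ denote the image of $e_{2i} \in \kw_{4i}\MSL$ under the canonical orientation $\MSL \to \kw$. Viewed as a morphism $\Sigma^{4i}\1 \to \kw$ in $\SH(S)$, and then smashed on the right with $\kw_{(2)}$, this gives $\Sigma^{4i}\kw_{(2)} \to \kw \wedge \kw_{(2)}$; assembling over $i \ge 0$ produces a map of right $\kw_{(2)}$-modules
\[ \Phi : \bigvee_{i \ge 0} \kw_{(2)}\{e_{2i}\} \to \kw \wedge \kw_{(2)}. \]
Post-composing with $\id_\kw \wedge (\kw_{(2)} \to \HW_{(2)})$ and using Lemma \ref{lemm:kw-HW} on the source yields the analogous map $\Psi : \bigvee_i \HW_{(2)}\{e_{2i}\} \to \kw \wedge \HW_{(2)}$.

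Next, reduce to a Dedekind base. By Remark \ref{rmk:spectra-general}, the spectra $\kw$, $\HW$, and $\MSL$ over an arbitrary scheme $S$ with $1/2 \in S$ are pulled back from $\Spec(\Z[1/2])$, and since pullback is symmetric monoidal and preserves wedges, both $\Phi$ and $\Psi$ are pullbacks of the corresponding maps over $\Spec(\Z[1/2])$. Since pullback preserves equivalences, it therefore suffices to treat the case $S = \Spec(\Z[1/2])$, which is Dedekind of Krull dimension $1$.

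In this case, let $C$ be the cofiber of $\Phi$ (resp.\ $\Psi$). By Remark \ref{rmk:check-over-fields}, $C \wequi 0$ iff $i_s^*(C) \wequi 0$ for each point $s \in \Spec(\Z[1/2])$. Since $\kw$, $\HW$, and $\MSL$ (and hence the classes $e_{2i}$) are stable under base change by Theorem \ref{thm:main-1} and \cite[Example 16.22]{bachmann-norms}, $i_s^*(\Phi)$ and $i_s^*(\Psi)$ are identified with the corresponding maps over the residue field $k(s)$ (which is $\Q$ or $\F_p$ for $p$ odd). Over fields these maps are shown to be equivalences in \cite{bachmann-eta}, completing the proof.

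The main (and essentially only) obstacle is merely bookkeeping: verifying that the construction of $\Phi$ and $\Psi$ from the $e_{2i}$ commutes with base change, so that the field-level statement can be transferred. Once Theorem \ref{thm:main-1} is in hand this is formal, and no new ingredient beyond the cited field-level result is required.
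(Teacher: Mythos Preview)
Your proposal is correct and follows essentially the same route as the paper: reduce to $S=\Spec(\Z[1/2])$ by stability under base change, then check the cofiber vanishes pointwise using \cite[Proposition B.3]{bachmann-norms}, and finally invoke the field case \cite[Proposition 7.7]{bachmann-eta}. One small imprecision: $\bar e_{2i}\in\kw_{4i}\kw$ is a map $\Sigma^{4i}\1\to\kw\wedge\kw$, not $\Sigma^{4i}\1\to\kw$; the right $\kw_{(2)}$-module map $\Sigma^{4i}\kw_{(2)}\to\kw\wedge\kw_{(2)}$ comes from extending along the free--forgetful adjunction (or equivalently smashing with $\kw_{(2)}$ and multiplying), but this does not affect the argument.
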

\begin{proof}
We may assume that $S = \Spec(\Z[1/2])$ and thus we may check that the induced map is an equivalence after base change to fields \cite[Proposition B.3]{bachmann-norms}.
Hence this follows from \cite[Proposition 7.7]{bachmann-eta}.
\end{proof}

\subsection{}
\begin{proposition} \label{prop:sphere-htpy-sheaves}
Let $S$ be essentially smooth over a Dedekind scheme.
We have \[ \ul{\pi}_*(\1_S[\eta^{-1}]) \wequi \begin{cases} \ul{W} & *=0 \\ \ul{W}[1/2] \otimes \pi_*^s \oplus \mathrm{coker}(8n: \ul{W}_{(2)} \to \ul{W}_{(2)}) & *=4n-1>0 \\ \ul{W}[1/2] \otimes \pi_*^s \oplus \ker(8n: \ul{W}_{(2)} \to \ul{W}_{(2)}) & *=4n>0 \\ \ul{W}[1/2] \otimes \pi_*^s & \text{else} \end{cases}. \]
Here $\pi_*^s$ denotes the classical stable stems.
\end{proposition}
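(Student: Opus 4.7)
The plan is to reduce to the case $S = \Spec(\Z[1/2])$ and then compute $\ul{\pi}_*(\1[\eta^{-1}])$ separately after $2$-localization and after inverting $2$, reassembling via the arithmetic fracture square. Since by Remark \ref{rmk:spectra-general}(3) the sheaf $\ul{W}$ is compatible with essentially smooth base change (and the other quantities appearing in the claim are either classical constants $\pi_*^s$ or built from $\ul{W}$ by operations involving the integers $8n$), the homotopy sheaves on both sides of the asserted identity pull back correctly, so this reduction is legitimate.

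For the $(2)$-local part, I would feed everything into the fiber sequence of Theorem \ref{thm:main2},
\[ \1[\eta^{-1}]_{(2)} \to \kw_{(2)} \xrightarrow{\adamsphi} \Sigma^4 \kw_{(2)}. \]
By Lemma \ref{lemm:kw-HW} we have $\ul{\pi}_*(\kw_{(2)}) \wequi \ul{W}_{(2)}[\beta]$ with $|\beta| = 4$; since $\beta \cdot \adamsphi = \adamspsi^3 - \id$ and $\adamspsi^3$ acts on the weight-$4n$ Bott class by multiplication by $9^n$, the induced map $\adamsphi_*: \ul{\pi}_{4n}(\kw_{(2)}) \to \ul{\pi}_{4n-4}(\kw_{(2)})$ is multiplication by $9^n - 1 = 8n \cdot u_n$, where $u_n$ is a $2$-local unit. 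The long exact sequence of homotopy sheaves then yields $\ul{W}_{(2)}$ in degree $0$, $\ker(8n: \ul{W}_{(2)} \to \ul{W}_{(2)})$ in degree $4n > 0$, $\mathrm{coker}(8n)$ in degree $4n - 1 > 0$, and $0$ in degrees not congruent to $0$ or $-1$ modulo $4$.

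For the $[1/2]$-part, I would use the canonical decomposition $\SH(S)[1/2] \wequi \SH(S)^+[1/2] \times \SH(S)^-[1/2]$ coming from the idempotents built out of $\epsilon = -\lra{-1}$. The relation $\eta\epsilon = -\eta$ implies that $\eta = 0$ on the plus part and $\eta$ is a unit on the minus part, hence $\1[\eta^{-1}, 1/2] \wequi \1^-[1/2]$. One then wants the identification
\[ \ul{\pi}_*(\1^-[1/2]) \wequi \ul{W}[1/2] \otimes \pi_*^s. \]
I would prove this by exhibiting $\1^-[1/2]$ as a module over $\HW[1/2]$, built via the unit map from the classical sphere, and verifying the claimed equivalence on homotopy sheaves after pullback to residue fields (using Remark \ref{rmk:check-over-fields}), where it reduces to the computation of \cite[\S8]{bachmann-eta}. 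Combining the two pieces via the fracture square gives the stated formula.

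The main obstacle will be the away-from-$2$ identification; the $(2)$-local computation is a direct consequence of the fundamental fiber sequence and knowledge of $\ul{\pi}_*(\kw)$. The field-case computation in \cite{bachmann-eta} does essentially all of the work once Theorem \ref{thm:main-1} and Theorem \ref{thm:main2} are in hand over Dedekind bases, and the remaining content is bookkeeping around the arithmetic fracture square.
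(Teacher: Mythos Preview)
Your overall strategy matches the paper's: split into $(2)$-local and $[1/2]$ parts, use the fundamental fiber sequence for the former and real \'etale ideas for the latter. Two issues are worth flagging.

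First, your reduction to $S = \Spec(\Z[1/2])$ via Remark \ref{rmk:spectra-general}(3) is not valid as stated: that remark concerns essentially smooth base change, but the composite $S \to D \to \Spec(\Z[1/2])$ need not be essentially smooth (e.g.\ $D$ could be $\Spec$ of an imperfect field). You should instead reduce to the Dedekind base $D$, or simply work over $S$ directly---the fiber sequence of Theorem \ref{thm:main2} and the identification $\ul\pi_*(\kw_S) \wequi \ul W[\beta]$ both hold there.

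Second, and more substantively, your claim that $\adamsphi_*: \ul\pi_{4n}(\kw_{(2)}) \to \ul\pi_{4n-4}(\kw_{(2)})$ is multiplication by $9^n - 1$ presupposes that $\adamsphi$ is $\ul W$-linear, equivalently that $\adamspsi^3$ acts as the identity on $\ul\pi_0(\kw) = \ul W$. Since $\adamspsi^3$ is only a ring endomorphism of $\kw$ (not a $\kw$-module map), one has $\adamspsi^3(w\beta^n) = \adamspsi^3(w) \cdot 9^n \beta^n$ and hence $\adamsphi_*(w\beta^n) = (9^n \adamspsi^3(w) - w)\beta^{n-1}$, which is not multiplication by $9^n - 1$ unless $\adamspsi^3|_{\ul W} = \id$. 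The paper handles this by \emph{first} computing $\ul\pi_0(\1[\eta^{-1}]) \wequi \ul W$: the $(2)$-local part follows from the fiber sequence in degree $0$ (where the target of $\adamsphi_*$ vanishes, so no linearity is needed), and the $[1/2]$ part from $\ul W[1/2] \wequi a_\ret \Z[1/2]$ via real realization. Any morphism in $\SH(S)[\eta^{-1}]$ is then automatically linear over $\ul\pi_0(\1[\eta^{-1}]) = \ul W$ on homotopy sheaves \cite[Example 3.7]{bachmann-eta}, so $\adamsphi$ is $\ul W$-linear after all. With this in hand your long exact sequence argument goes through, and the paper simply invokes the field case \cite[Theorem 8.1]{bachmann-eta} for the remaining details---including the $[1/2]$-part, where your sketch via ``$\HW[1/2]$-modules built from the classical sphere'' is vaguer than the paper's direct appeal to real realization.
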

\begin{proof}
We first show that $\ul{\pi}_0(\1[\eta^{-1}]) \to \ul{\pi}_0(\HW) \wequi \ul{W}$ is an isomorphism.
We may do so after $\otimes \Z_{(2)}$ and $\otimes \Z[1/2]$; the former case is immediate from the fundamental fiber sequence (using Remark \ref{rmk:spectra-general}(3)) and the latter case follows from real realization (since $\ul{W}[1/2] \wequi a_\ret \Z[1/2]$ \cite[Corollary 7.1]{jacobson-fundamental-ideal}).
In particular it follows that $\adamsphi$ is $\ul{W}$-linear (see \cite[Example 3.7]{bachmann-eta}).
The proof of \cite[Theorem 8.1]{bachmann-eta} now goes through unchanged.
\end{proof}

\subsection{}
\begin{lemma} \label{lemm:W-coh-vanishing}
Suppose that $D$ is a localization of $\Z$.
Then $H^*(D, \ul{W}) = 0$ for $*>0$.
\end{lemma}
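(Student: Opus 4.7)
The plan is to reduce to the Gersten-type resolution of $\ul W$ on Dedekind bases, established in mixed characteristic by Jacobson \cite{jacobson2018cohomological} (and already invoked throughout this paper). Since $D$ has Krull dimension at most one, its Nisnevich cohomological dimension is at most one, so $H^i(D, \ul W) = 0$ automatically for $i \ge 2$; only the case $i = 1$ remains to be treated.

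Over the $1$-dimensional Dedekind scheme $D$, the Gersten resolution computes $H^1(D, \ul W)$ as the cokernel of a residue map
\[
\partial \colon W(\Q) \longrightarrow \bigoplus_{p} W(\F_p, L_p),
\]
where the sum runs over the closed points $p$ of $D$ and $L_p$ denotes the appropriate cotangent twisting line at $p$. Surjectivity of $\partial$ is a form of Milnor's second residue theorem for Witt rings: for $D = \Spec \Z$ one has the classical split short exact sequence
\[
0 \to W(\Z) \to W(\Q) \to \bigoplus_p W(\F_p) \to 0,
\]
and for a localization of $\Z$ one simply discards the summands indexed by the inverted primes, which preserves surjectivity.

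Alternatively, one can avoid invoking the Gersten resolution for $\ul W$ directly and instead work inductively on the fundamental-ideal filtration from Theorem \ref{thm:jac1}: since $\vcd_2(\Q) < \infty$ and $\dim D \le 1$, Lemma \ref{lemm:vcd-bound} and Proposition \ref{prop:jac2} give $\ul I^N \wequi a_\ret \Z$ for some $N$, so that downward induction along $0 \to \ul I^{n+1} \to \ul I^n \to \ul k^M_n \to 0$ reduces to vanishing of $H^1(D, \ul k_n^M)$ (for finitely many $n$) and $H^1(D, a_\ret \Z)$; the former follows from the Gersten resolution for $\ul k^M_*$ together with surjectivity of Galois residue maps from $\Q$ to $\F_p$, and the latter from the fact that the real spectrum of $D$ and of each of its Nisnevich neighborhoods is a finite discrete set.

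The main obstacle is bookkeeping in the first approach: one must verify that the Gersten complex of \cite{jacobson2018cohomological} for $\ul W$ on the small Nisnevich site of $D$ agrees with the classical Milnor residue complex up to the twist $L_p$, which should be essentially formal once sign and twist conventions are fixed.
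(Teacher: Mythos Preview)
Your first approach is essentially the paper's proof. The paper obtains the Gersten sequence for $\ul W$ on Dedekind schemes directly from \cite[Corollary IV.3.3]{milnor1973symmetric}, observes that its terms are acyclic Nisnevich sheaves (pushforwards from points, via \cite[Lemma 5.42]{A1-alg-top}), and then quotes \cite[Theorem IV.2.1]{milnor1973symmetric} for surjectivity of the residue map over $\Z$, from which the case of a localization follows exactly as you say. Two small remarks: the Gersten resolution for $\ul W$ on a Dedekind base is classical and should be attributed to Milnor--Husemoller rather than Jacobson (whose contribution in \cite{jacobson2018cohomological} concerns the filtration quotients $\ul I^n/\ul I^{n+1}$, not the Gersten complex for $\ul W$ itself); and the step ``the Gersten complex computes Nisnevich cohomology'' deserves the one-line acyclicity justification the paper gives, rather than being folded into the citation.

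Your alternative route via the filtration $\ul I^{n+1} \subset \ul I^n$, Proposition \ref{prop:jac2}, and separate treatment of $\ul k_n^M$ and $a_\ret \Z$ would also work and is a genuinely different argument; it is more in the spirit of the rest of the paper but less direct here, since for $H^1(D, \ul k_n^M)$ you again end up invoking a Gersten resolution (this time for $\ul k_n^M$) and a residue-surjectivity statement over $\Q$, so little is gained over doing this once for $\ul W$ directly.
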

\begin{proof}
\cite[Corollary IV.3.3]{milnor1973symmetric} shows that for any Dedekind scheme $D$ there is a natural exact sequence of abelian groups $(*)$ \[ 0 \to W(D) \to \bigoplus_{x \in D^{(0)}} W(x) \to \bigoplus_{x \in D^{(1)}} W(x, \omega_{x/D}). \]
The last map is surjective if $D$ has only one point, and hence these sequences constitute a resolution of the presheaf $W$ on $D_\Nis$.
The terms of this resolution are acyclic \cite[Lemma 5.42]{A1-alg-top}\footnote{The proof of this result does not use the stated assumption that $X$ is smooth over a field.}\NB{really?}, and hence this resolution can be used to compute cohomology.
In order to show that $H^*(D, \ul{W}) = 0$ for $*>0$ it suffices to show that the right most map of $(*)$ is surjective.
Clearly if this is true for $D$ then it also holds for any localization of $D$.
It hence suffices to prove surjectivity for $\Z$; this is \cite[Theorem IV.2.1]{milnor1973symmetric}.
\end{proof}

\begin{example}
Lemma \ref{lemm:W-coh-vanishing} shows that the descent spectral sequence for $\pi_*\kw$ (or $\pi_* \KW$) collapses over $\Z[1/2]$ (and localizations of this base).
Thus \[ \pi_* \kw_{\Z[1/2]} \wequi \W(\Z[1/2])[\beta]. \]
From this we can read off as in the proof of Proposition \ref{prop:sphere-htpy-sheaves} that (using that $\W(\Z[1/2])[1/2] \wequi \Z[1/2]$) \[ \pi_*(\1_{\Z[1/2]}[\eta^{-1}]) \wequi \begin{cases} \W(\Z[1/2]) & *=0 \\ \pi_*^s[1/2] \oplus \mathrm{coker}(8n: \W(\Z[1/2])_{(2)} \to \W(\Z[1/2])_{(2)}) & *=4n-1>0 \\ \pi_*^s[1/2] \oplus \ker(8n: \W(\Z[1/2])_{(2)} \to \W(\Z[1/2])_{(2)}) & *=4n>0 \\ \pi_*^s[1/2] & \text{else} \end{cases}. \]
\end{example}

\begin{remark} \label{rmk:W-Z-12}
We often use the above result in conjunction with the isomorphism (see e.g. \cite[proof of Theorem 5.11]{bachmann-euler}) \[ \W(\Z[1/2]) \wequi \Z[g]/(g^2, 2g). \]
(Here $g=\lra{2}-1$.)
Note in particular that $\W(\Z[1/2])_\red \wequi \Z$, $\W(\Z[1/2])_{(2)}$ is a local ring, and $\W(\Z[1/2]) \hookrightarrow \W(\Q)$.
\end{remark}

\subsection{}
\begin{proposition} \label{prop:MSL-MSp-kw}
\begin{enumerate}
\item We have \[ \pi_* \MSp_{\Z[1/2]}[\eta^{-1}] \wequi \W(\Z[1/2])[y_1, y_2, \dots]. \]
\item The generators from (1) induce for a scheme $S$ essentially smooth over a Dedekind scheme $D$ with $1/2 \in D$ \[ \ul{\pi}_* \MSp_S[\eta^{-1}] \wequi \ul{W}[y_1, y_2, \dots] \] and \[ \ul{\pi}_* \MSL_S[\eta^{-1}] \wequi \ul{W}[y_2, y_4, \dots]. \]
\item Over any $S$ we have $\MSp/(y_1, y_3, \dots) \wequi \MSL$.
\item There exist generators $y_2, y_4, \dots$ such that $\MSL/(y_4, y_6, \dots) \wequi \kw.$
\end{enumerate}
\end{proposition}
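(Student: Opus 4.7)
The plan is to import the arguments of \cite[\S8]{bachmann-eta} (where the analogous statements are established over fields) to our present setting, the key new inputs being the fundamental fiber sequence over an arbitrary base from Theorem \ref{thm:main2} and the base change stability of $\kw$ and $\ko^\fr$ from Theorem \ref{thm:main-1}.

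For (1), I would first establish $\kw_* \MSp_{\Z[1/2]} \wequi \W(\Z[1/2])[\beta][y_1, y_2, \dots]$ by an argument paralleling the recalled formula $\kw_* \MSL \wequi \kw_*[e_2, e_4, \dots]$: orient $\MSp$ via symplectic Pontryagin classes and identify $\kw \wedge \MSp$ with a wedge of shifts of $\kw$ indexed by monomials in the $y_i$. The descent spectral sequence for $\pi_*(\kw \wedge \MSp)_{\Z[1/2]}$ then collapses by Lemma \ref{lemm:W-coh-vanishing}. Smashing the fundamental fiber sequence of Theorem \ref{thm:main2} with $\MSp$ and reading off the resulting long exact sequence yields the $2$-local part of $\pi_*\MSp_{\Z[1/2]}[\eta^{-1}]$. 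The $[1/2]$-part is handled via real realization exactly as in the proof of Proposition \ref{prop:sphere-htpy-sheaves}. Combining the two pieces and using that $\W(\Z[1/2])_{(2)}$ is local (Remark \ref{rmk:W-Z-12}) to lift the $y_i$ to honest polynomial generators delivers the stated presentation.

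For (2), by Remark \ref{rmk:spectra-general}(3) it suffices to treat $S = D$ Dedekind, and the whole argument of (1) can be rerun at the level of Nisnevich sheaves: the splitting $\kw \wedge \MSp \wequi \bigvee \kw\{\text{monomials in } y_i\}$ of part (1) together with the local Nisnevich analogue of Lemma \ref{lemm:W-coh-vanishing} (vanishing of $\ul{H}^{>0}_\Nis(\ul{W})$) gives $\ul{\pi}_*(\kw_D \wedge \MSp) \wequi \ul{W}[\beta][y_i]$, and the sheafified long exact sequence from Theorem \ref{thm:main2} then computes $\ul{\pi}_* \MSp_D[\eta^{-1}]$. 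The $\MSL$-case is identical but omits the odd-indexed generators, since those are precisely what appears in the recalled formula for $\kw_* \MSL$.

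For (3) and (4), the inclusion $\Sp_{2n} \subset \SL_{2n}$ induces $\MSp \to \MSL$, and the canonical $\SL$-orientation gives $\MSL \to \kw$. Their effects on homotopy sheaves after inverting $\eta$ are, respectively, $y_{2i+1} \mapsto 0$ with $y_{2i}$ preserved (up to a unit), and $y_2 \mapsto \beta$ with $y_{2i} \mapsto 0$ for $i \ge 2$ (again up to a unit); this can be read off from (2) together with the splitting formulas recalled just above. By Remark \ref{rmk:check-over-fields} combined with Theorem \ref{thm:main-1} the resulting equivalences of cofibers reduce to the field case already proved in \cite[\S8]{bachmann-eta}, and the statement "over any $S$" then follows by pullback along $S \to \Spec(\Z[1/2])$. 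The main obstacle, inherited from the field case, is the coherent choice of polynomial generators $y_i$ so that the above maps take the claimed form on the nose; over our bases the extra step is ensuring that the $y_i$ fixed once and for all over $\Z[1/2]$ via (1) pull back to valid generators everywhere, which is immediate from the base change compatibilities used in (2).
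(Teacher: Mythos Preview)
Your strategy is the paper's (use the splitting of $\kw \wedge \MSp$ and the fundamental fiber sequence $2$-locally, real realization after inverting $2$, then reduce (3) and (4) to fields via Remark~\ref{rmk:check-over-fields}), but two steps in (1) are genuinely missing rather than merely compressed.

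First, ``reading off the long exact sequence'' of $\MSp_{(2)} \to \kw_{(2)} \wedge \MSp \xrightarrow{\adamsphi} \Sigma^4 \kw_{(2)} \wedge \MSp$ does not by itself compute $\pi_* \MSp_{(2)}$: you must first show that $\adamsphi$ is surjective on $\kw_* \MSp_{(2)}$ (so the long exact sequence breaks into short exact ones) and then that $\ker(\adamsphi)$ is polynomial. The paper does this by observing that $\kw_* \MSp_{(2)}$ is degreewise finitely generated over the local ring $\W(\Z[1/2])_{(2)}$, that base change to $\Spec(\C)$ realizes reduction modulo the maximal ideal, and that the claim holds over $\C$ by \cite[Lemma~8.4]{bachmann-eta}; Nakayama then lifts everything. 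You allude to the local ring but not to this argument. Second, ``combining the two pieces'' to obtain integral generators $y_n \in \pi_{2n}\MSp_{\Z[1/2]}$ is not a formality: a graded $\W(\Z[1/2])$-algebra that becomes polynomial after $\otimes\Z_{(2)}$ and after $\otimes\Z[1/2]$ need not be polynomial integrally. The paper's device is to set $J = \ker(\pi_*\MSp \to \pi_*\HW)$ and note that $(J/J^2)_{2n}$ is free of rank one after both localizations, hence an invertible $\W(\Z[1/2])$-module, hence free since $\W(\Z[1/2])_\red \simeq \Z$ has trivial Picard group; a lift of a generator furnishes $y_n$. The same gap recurs in (4): that $y_2 \mapsto \beta$ ``up to a unit'' is the assertion that $\pi_4 \MSL_{\Z[1/2]} \to \pi_4 \kw_{\Z[1/2]}$ is an isomorphism of rank-one $\W(\Z[1/2])$-modules, and this cannot be read off from (2) but has to be checked separately (after $\otimes\Z_{(2)}$ via base change to $\C$, after $\otimes\Z[1/2]$ via real realization to $\R$, invoking \cite[Lemma~8.10]{bachmann-eta}).
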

\begin{proof}
We implicitly invert $\eta$ throughout.

We first prove (1) and the part of (2) about $\MSp$, localized at $2$.
Note that $\kw_* \MSp_{(2)} \wequi \W(\Z[1/2])_{(2)}[e_1, e_2, \dots]$ is degreewise finitely generated over the local ring $\W(\Z[1/2])_{(2)}$ (see Remark \ref{rmk:W-Z-12}).
Moreover base change along $\Spec(\C) \to \Spec(\Z[1/2])$ implements the map $\W(\Z[1/2])_{(2)} \to \W(\C)$ of passing to the residue field.
Consider the morphism \[ \adamsphi: \kw_* \MSp_{(2)} \to \kw_{*-4} \MSp_{(2)}. \]
We deduce from the above discussion and \cite[Lemma 8.4]{bachmann-eta} that $\adamsphi \otimes_{\W(\Z[1/2])_{(2)}} \F_2$ is surjective, and hence $\adamsphi$ is split surjective.
Then $\ker(\adamsphi) \otimes_{\W(\Z[1/2])_{(2)}} \F_2 \wequi \ker(\adamsphi \otimes_{\W(\Z[1/2])_{(2)}} \F_2)$ is a polynomial ring on generators $\bar y_i$.
Write $\tilde y_i \in \pi_* \MSp_{(2)}$ for arbitrary lifts of these generators.
The proof of \cite[Corollary 8.6]{bachmann-eta} shows that if $\Spec(k) \to \Spec(\Z[1/2])$ is an arbitrary field, then $\pi_*(\MSp_{(2)})(k) \wequi \W(k)_{(2)}[\tilde y_1, \tilde y_2, \dots]$.
We shall show that if $S$ is henselian local and essentially smooth over $D$ then $\pi_*(\MSp_{(2)})(S) \wequi \pi_*(\MSp_{(2)})(s)$, where $s$ is the closed point; this will imply our claims.
Using the fundamental fiber sequence, for this it is enough to show that $\W(S) \wequi \W(s)$, which holds by \cite[Lemma 4.1]{jacobson2018cohomological}.

Note that by real realization, $\pi_* (\MSp[1/2])(\Z[1/2]) \wequi \pi_* (\MSp[1/2])(\R) \wequi \Z[y_1', y_2', \dots]$.
Let $J = \ker(\pi_*(\MSp)(\Z[1/2]) \to \pi_*(\HW)(\Z[1/2]))$ and $M=(J/J^2)_{2n}$.
We shall prove that $M \wequi \W(\Z[1/2])$.
The argument is the same as in \cite[Theorem 8.7]{bachmann-eta}: such isomorphisms exist for $M_{(2)}$ and $M[1/2]$ by what we have already done, which implies that $M$ is an invertible $\W(\Z[1/2])$-module, but $\W(\Z[1/2])_\red \wequi \Z$ (see Remark \ref{rmk:W-Z-12}) and hence all such invertible modules are trivial.
Write $y_n \in \pi_{2n}(\MSp)(\Z[1/2])$ for any lift of a generator of $M$.
We shall prove that the proposition holds with these choices of $y_i$.

Let $S$ be as in (2).
To show that $\ul{\pi}_*(\MSp_S) \wequi \ul{W}[y_1, y_2, \dots]$, it is enough that the map is an isomorphism after $\otimes \Z_{(2)}$ and after $\otimes \Z[1/2]$.
The former case was already dealt with.
For the latter, we use the equivalence $\SH(S)[1/2, 1/\eta] \wequi \SH(S_\ret)[1/2]$.
We are this way reduced to proving that if $E \in \SH(\Z[1/2]_\ret)[1/2]$ with $\pi_*E \wequi \Z[1/2, y_1, y_2, \dots]$, then for any morphism $f: S \to \Z[1/2]$ we have $\ul{\pi}_*(f^*E) \wequi \ul{W}[1/2, y_1, y_2, \dots]$.
This follows from the facts that (a) $\Sper(\Z[1/2]) \wequi *$, so our assumption implies that $\ul{\pi}_*E \wequi a_\ret \Z[1/2, y_1, y_2, \dots]$, (b) $f^*$ is (in this setting) $t$-exact, and (c) $\ul{W}[1/2] \wequi a_\ret \Z[1/2]$.

We have now proved (2) for $\MSp$.
Claim (1) follows from the descent spectral sequence and Lemma \ref{lemm:W-coh-vanishing}.
Next we claim that over $\Z[1/2]$ (and hence in general), $\MSp \to \MSL$ annihilates $y_i$ for $i$ odd.
Indeed we can check this after $\otimes \Z_{(2)}$ and $\otimes \Z[1/2]$; the latter cases reduces via real realization to the base field $\R$ where we already know it.
For the former case, we first verify as above that $\adamsphi: \kw_* \MSL_{(2)} \to \kw_{*-4} \MSL_{(2)}$ is surjective, and hence $\pi_* \MSL_{(2)} \to \kw_* \MSL_{(2)}$ is injective; the claim follows easily from this.
It follows that we may form $\MSp/(y_1, y_3, \dots) \to \MSL$.
To check that this is an equivalence we may pull back to fields, and so we are reduced to \cite[Corollary 8.9]{bachmann-eta}.
We have now proved (3), which implies the missing half of (2).

It remains to establish (4).
We claim that $\pi_4 \MSL_{\Z[1/2]} \to \pi_4 \kw_{\Z[1/2]}$ is surjective, and hence an isomorphism since both groups are free $\W(\Z[1/2])$-modules of rank $1$.
This we may check after $\otimes \Z_{(2)}$ and $\otimes \Z[1/2]$.
In the former case, we have a morphism of finitely generated modules over a local ring, so may check $\otimes_{\W(\Z[1/2])_{(2)}} \F_2$, i.e. over $\C$, in which case the claim holds by \cite[Lemma 8.10]{bachmann-eta}.
In the latter case, via real realization we reduce to $\R$, and so again the claim holds by \cite[Lemma 8.10]{bachmann-eta}.
The upshot is that we may choose $y_2$ in such a way that its image in $\pi_4 \kw$ is the generator $\beta$.
Then as in the proof of \cite[Corollary 8.11]{bachmann-eta} we modify the other generators to be annihilated in $\kw$ to obtain a map $\MSL/(y_4, y_6, \dots) \to \kw$.
This map is an equivalence since it is so after pullback to fields, by \cite[Corollary 8.11]{bachmann-eta}.
\end{proof}

\subsection{}
\begin{proposition}
Let $\scr S$ be the set of primes not invertible on $S$.
The spectra $\kw, \HW, \HtildeZ[\scr S^{-1}], \HZW, \ul{K}^W \in \SH(S)$ are cellular.
\end{proposition}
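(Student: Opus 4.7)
The plan is to reduce to the universal case $S = \Spec(\Z[1/2])$ and then successively build each spectrum from cellular pieces using cofibers, colimits, and pullbacks. Cellularity is preserved by $f^*$ for any $f$, since $f^*$ commutes with colimits and sends motivic spheres to motivic spheres; hence by Remark \ref{rmk:spectra-general}(1) it suffices to work over $\Spec(\Z[1/2])$. It is also useful to recall that in the stable $\infty$-category $\SH(S)$ a homotopy pullback $A \times_C B$ is equivalent to $\Sigma^{-1}\cofib(A \oplus B \to C)$, so pullbacks of cellular diagrams yield cellular spectra.

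For $\kw$ and $\HW$ the algebraic cobordism presentation works cleanly. The Thom spectrum $\MSL$ is cellular (the associated Thom spaces come from universal bundles on Grassmannian-type varieties with Schubert-cell decompositions), so Proposition \ref{prop:MSL-MSp-kw}(4) gives $\kw \simeq \MSL/(y_4, y_6, \dots)$ as an iterated cofiber of $\MSL$-linear self-maps, hence cellular. Lemma \ref{lemm:kw-HW} then yields $\HW \simeq \kw/\beta$ cellular. For $\ul{k}^M$, the motivic Eilenberg--MacLane spectrum $\HZ$ is cellular over $\Spec(\Z[1/2])$ by Spitzweck's construction, whence $\HZ/2$ is cellular as a cofiber of $2$, and Lemma \ref{lemm:kM-mot-local} identifies $\ul{k}^M \simeq (\HZ/2)/\tau$ as a further cofiber.

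The main obstacle is $\ul{K}^W$. By Lemma \ref{lemm:identify-HW} we have $\ul{K}^W[\eta^{-1}] \simeq \HW$ and $\ul{K}^W/\eta \simeq \ul{k}^M$, both cellular. Using the cofiber sequence $\Sigma^{1,1}\ul{K}^W \xrightarrow{\eta} \ul{K}^W \to \ul{k}^M$, induction shows each finite quotient $\ul{K}^W/\eta^n$ is cellular, as an $n$-fold extension of shifted copies of $\ul{k}^M$. Next one verifies $\ul{K}^W$ is $\eta$-complete: Proposition \ref{prop:jac2} together with $\bigcap_n \ul{I}^n(A) = 0$ for henselian $A$ (used in its proof) shows that in any fixed weight the homotopy sheaves of $\ul{K}^W$ are captured by large stages of the $\eta$-adic tower, the fibers $\fib(\ul{K}^W \to \ul{K}^W/\eta^n) \simeq \Sigma^{n,n}\ul{K}^W$ becoming arbitrarily $\Gm$-connective. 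Combined with cellularity of $\HW$, the arithmetic pullback square comparing $\ul{K}^W$ with its $\eta$-inversion and $\eta$-completion then yields cellularity of $\ul{K}^W$. This $\eta$-completion step is the delicate point, since cellularity is not automatically closed under inverse limits; the essential input is the stabilization of the tower in each bi-degree, controlled by the Jacobson-type results on $\ul{I}^n$.

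Finally, $\HZW$ and $\HtildeZ[\scr S^{-1}]$ are defined as pullbacks of diagrams whose other vertices ($\HZ, \HZ/2, \ul{k}^M, \ul{K}^W$) are cellular; the localization $\scr S^{-1}$ appearing for $\HtildeZ$ is precisely what is needed to keep $\HZ$ cellular after base change over residue characteristics of $S$ that may not be invertible. By the observation in the first paragraph, pullbacks of cellular spectra in $\SH(S)$ are again cellular, completing the proof.
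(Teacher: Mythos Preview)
Your treatment of $\kw$, $\HW$, $\ul{k}^M$, and the final pullback step for $\HZW$ and $\HtildeZ[\scr S^{-1}]$ matches the paper's. The substantive divergence is in how you handle $\ul{K}^W$, and there the completion route does not close.

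The problem is the passage through $\eta$-completion. Even granting that $\ul{K}^W$ is $\eta$-complete and that each $\ul{K}^W/\eta^n$ is cellular, you still need $\lim_n \ul{K}^W/\eta^n$ to be cellular, and the subcategory of cellular spectra is only localizing, not colocalizing. ``Stabilization of the tower in each bidegree'' controls the homotopy sheaves of the limit via Milnor sequences, but homotopy sheaves do not detect cellularity in $\SH(S)$: a spectrum with vanishing $\ul{\pi}_{*}(\ph)_*$ need not be zero unless it is already known to be cellular. So neither $\eta$-completeness nor the fracture square delivers the conclusion.

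The paper's one-line deduction ``$\ul{K}^W/\eta$ and $\ul{K}^W[\eta^{-1}]$ cellular, hence $\ul{K}^W$ cellular'' rests on a purely colimit argument. For any $E$ and any self-map $x$, the cofiber of the telescope map $E \to E[x^{-1}] = \colim_n \Sigma^{-n|x|}E$ is the filtered \emph{colimit} $\colim_n \cof(E \to \Sigma^{-n|x|}E)$, and each stage is a finite iterated extension of shifts of $E/x$. Thus if $E/x$ lies in a localizing subcategory $\mathcal{L}$, so does $\fib(E \to E[x^{-1}])$; if also $E[x^{-1}] \in \mathcal{L}$, then $E \in \mathcal{L}$. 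No completeness or convergence input is needed, and the Jacobson-type results on $\ul{I}^n$ play no role here.

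A minor secondary point: the reduction to $S = \Spec(\Z[1/2])$ is harmless for $\kw, \HW, \HZW, \ul{K}^W$, but not for $\HtildeZ[\scr S^{-1}]$. Over $\Z[1/2]$ the set $\scr S$ consists of all odd primes, so pulling back you would only conclude that $\HtildeZ_S$ becomes cellular after inverting \emph{every} odd prime, which is weaker than inverting just the residue characteristics of $S$. The paper instead invokes Spitzweck's cellularity of $\HZ[\scr S^{-1}]$ directly over the given base.
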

\begin{proof}
We can argue as in \cite[Proposition 8.12]{bachmann-eta}:

For $\kw$ this follows from Proposition \ref{prop:MSL-MSp-kw}(3, 4) and cellularity of $\MSp$.
Hence $\HW \wequi \kw/\beta$ is cellular.
$\HZ[\scr S^{-1}]$ is cellular by \cite[Corollary 11.4]{spitzweck2012commutative}; in particular $\HZ/2$ and $\ul{k}^M \wequi (\HZ/2)/\tau$ are cellular.
Thus $\ul{K}^W/\eta \wequi \ul{k}^M$ and $\ul{K}^W[\eta^{-1}] \wequi \HW$ are cellular and thus so is $\ul{K}^W$.
Cellularity of $\HtildeZ[\scr S^{-1}]$ and $\HZW$ now follows from the defining fiber squares (in which all the other objects are cellular by what we have already proved).
\end{proof}

\subsection{}
\begin{proposition} \label{prop:kw-op}
For arbitrary $S$ we have \[ \kw^*_{(2)}\kw \wequi \kw^*_{(2)}\fpsr{'\adamsphi}, \] in the sense that $\adamsphi$ need not be central and so the multiplicative structure is more complicated than a power series ring.
We have $\adamsphi\beta = 9\beta \adamsphi + 8$.
\end{proposition}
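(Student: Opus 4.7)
The plan is to establish the proposition in two stages: first the underlying abelian-group level identification $\kw^*_{(2)}\kw \wequi \kw^*_{(2)}\fpsr{'\adamsphi}$, and then the commutation relation $\adamsphi\beta = 9\beta\adamsphi + 8$ that controls the non-commutative multiplication.

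For the identification, first reduce to $S = \Spec(\Z[1/2])$: since the fiber sequence of Theorem \ref{thm:main2} and the spectra $\kw_{(2)}$, $\Sigma^4\kw_{(2)}$, $\1[\eta^{-1}]_{(2)}$ are all pulled back from $\Z[1/2]$, so is the computation of $\kw^*_{(2)}\kw$. Over $\Z[1/2]$, iterate the fundamental fiber sequence $\1[\eta^{-1}]_{(2)} \to \kw_{(2)} \xrightarrow{\adamsphi} \Sigma^4\kw_{(2)}$ to build an Adams-type tower of cofiber sequences exhibiting $\kw_{(2)}$ as an iterated extension of $\Sigma^{4n}\1[\eta^{-1}]_{(2)}$'s. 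Apply $\kw^*_{(2)}(-)$: by $\eta$-periodicity of $\kw$, $\kw^*_{(2)}(\1[\eta^{-1}]_{(2)}) \wequi \kw^*_{(2)}$, and the induced short exact sequences assemble into an inverse system whose limit gives $\kw^*_{(2)}\kw \wequi \prod_{i \ge 0} \kw^*_{(2)} \cdot \adamsphi^i$, with vanishing $\lim^1$ controlled by the degreewise finiteness of $\kw^*_{(2)}$-modules involved (cf.\ the descent spectral sequence argument in Theorem \ref{thm:main2}, using $\dim \Z[1/2] \le 1$).

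For the commutation relation, observe that $\adamspsi^3: \kw_{(2)} \to \kw_{(2)}$ is a ring spectrum map and that $\adamspsi^3$ acts on $\pi_4\KO$ by $3^2 = 9$, so $\adamspsi^3(\beta) = 9\beta$. From multiplicativity, $\adamspsi^3 \circ m_\beta = m_\beta \circ \adamspsi^3$ upgrades (using $\adamspsi^3 \circ \mu = \mu \circ (\adamspsi^3 \wedge \adamspsi^3)$ and $\adamspsi^3 \wedge \beta \wequi \adamspsi^3 \wedge 9\beta$) to the identity $\adamspsi^3 \cdot \beta = 9\beta \cdot \adamspsi^3$ as elements of $\kw^*_{(2)}\kw$. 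Substituting $\adamspsi^3 = 1 + \beta\adamsphi$ (from Theorem \ref{thm:main2}(1)) and expanding yields
\[ \beta + \beta\adamsphi\beta = 9\beta + 9\beta^2\adamsphi, \qquad \text{i.e.,}\quad \beta \cdot (\adamsphi\beta - 9\beta\adamsphi - 8) = 0 \text{ in } \kw^*_{(2)}\kw. \]
By the power-series description established in the first step, multiplication by $\beta$ on $\kw^*_{(2)}\kw$ is injective (it corresponds to the injection $\W_{(2)}\fpsr{\beta\adamsphi} \hookrightarrow \beta\W_{(2)}\fpsr{\beta\adamsphi}$ on coefficients degreewise), so the relation $\adamsphi\beta = 9\beta\adamsphi + 8$ holds.

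The main obstacle is the first step: carefully setting up the iterated tower and verifying the $\lim^1$ vanishing required to conclude the power-series description, together with the base-change compatibility (the fiber sequence behaves well, but cohomology of a spectrum does not automatically, so one uses that all the relevant spectra are pullbacks and that the computation itself is formal from the tower). A secondary subtlety is the logical ordering: the power-series description is needed to cancel $\beta$ in the derivation of the commutation relation, but only the additive description is required for that cancellation, so no circularity arises.
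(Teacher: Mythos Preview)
Your approach to the additive identification coincides with the paper's: iterate the fundamental fiber sequence into a tower and apply $\kw^*_{(2)}(-)$, which is exactly what the paper invokes by citing \cite[Lemma 8.14, Corollary 8.15]{bachmann-eta} and observing that stability under base change makes those results hold over any base. Two presentational issues. First, ``reduce to $S=\Spec(\Z[1/2])$'' is misleading: both sides of $\kw^*_{(2)}\kw \simeq \kw^*_{(2)}\fpsr{'\adamsphi}$ depend on $S$ through the coefficient ring $\kw^*_{(2)}=\kw^*_{(2)}(\1_S)$, so the cohomological computation must be run over each $S$ (the tower pulls back, but $[-,\kw_{(2)}]$ does not). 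Your closing paragraph shows you see this, though the opening obscures it. Second, the $\lim^1$ vanishing is not governed by ``degreewise finiteness'' or $\dim\Z[1/2]\le 1$: what works over arbitrary $S$ is that the transition maps $\kw^*_{(2)}(F_{n+1})\to\kw^*_{(2)}(F_n)$ are split surjections of $\kw^*_{(2)}$-modules (via $a\mapsto m_a$), hence Mittag--Leffler. The honestly base-sensitive input is rather $\colim_n\Sigma^{4n}\kw\simeq 0$, which holds over $\Z[1/2]$ by connectivity and then pulls back since $f^*$ preserves colimits.

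For the commutation relation you take a genuinely different route. The paper reduces to $\Z[1/2]$ (legitimate here, since the relation concerns elements pulled back from there) and then uses the injection $\W(\Z[1/2])\hookrightarrow\W(\Q)$ to pass to a field, citing \cite[Corollary 8.19]{bachmann-eta}. You instead derive $\adamspsi^3\cdot\beta=9\beta\cdot\adamspsi^3$ directly from multiplicativity of $\adamspsi^3$ and $\adamspsi^3(\beta)=9\beta$, substitute $\adamspsi^3=1+\beta\adamsphi$, and cancel the factor of $\beta$ using that left multiplication by $\beta$ on $\prod_i\kw^{*-4i}_{(2)}\cdot\adamsphi^i$ is injective over $\Z[1/2]$ (where $\kw^*_{(2)}\simeq\W(\Z[1/2])_{(2)}[\beta]$). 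This is correct, and is presumably how the field-case argument in \cite{bachmann-eta} goes anyway, so you are trading a citation for a short direct computation. The only input you leave unsourced is $\adamspsi^3(\beta)=9\beta$, which comes from the construction of the stable Adams operation in \cite[\S3]{bachmann-eta}.
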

\begin{proof}
Stability under base change implies that \cite[Lemma 8.14]{bachmann-eta} holds over any base; hence the additive structure of $\kw^*_{(2)}\kw$ can be determined as in \cite[proof of Corollary 8.15]{bachmann-eta}.
The interaction of $\beta$ and $\adamsphi$ may be determined over $\Z[1/2]$.
Since $\W(\Z[1/2]) \hookrightarrow \W(\Q)$ we are reduced to $S=\Spec(\Q)$, which was already dealt with in \cite[Corollary 8.19]{bachmann-eta}.
\end{proof}
\begin{example}
Suppose that $S$ is essentially smooth over a Dedekind scheme, $H^*(S, \ul{W}) = 0$ for $*>0$, and $\W(S)$ is generated by $1$-dimensional forms (e.g. $S$ local or a localization of $\Z$).
Then $\kw^* \wequi \W(S)[\beta]$ and $\pi_0(\1) \to \pi_0(\kw)$ is surjective, whence $\adamsphi$ commutes with $\W(S)$.
It follows that Proposition \ref{prop:kw-op} yields a complete description of $\kw^*_{(2)}\kw$.
\end{example}

\subsection{}
\begin{proposition}
\begin{enumerate}
\item There exist generators $x_i \in \pi_{4i}(\kw \wedge \HW_{(2)})$ such that \[ x_mx_n = {m+n \choose n}x_{m+n}. \]
\item We have \[ \HW \wedge \HW_{(2)} \wequi \bigvee_{n \ge 0} \Sigma^{4n} \HW/8n. \]
\end{enumerate}
\end{proposition}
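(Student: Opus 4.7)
My plan for this proposition is to follow the pattern used throughout this section: first reduce by stability under base change (Theorem \ref{thm:main-1}) to $S = \Spec(\Z[1/2])$, and then transfer numerical identities to $\Q$ via the injection $\W(\Z[1/2]) \hookrightarrow \W(\Q)$ of Remark \ref{rmk:W-Z-12}, at which point the corresponding statements of \cite{bachmann-eta} apply.

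For (1), the previous proposition shows that over $\Z[1/2]$ the group $\pi_{4i}(\kw \wedge \HW_{(2)})$ is free of rank one over $\W(\Z[1/2])_{(2)}$. I would choose lifts $x_i \in \pi_{4i}(\kw \wedge \HW_{(2)})$ of the divided-power generators $x_i^\Q$ provided over $\Q$ by the analogous result of \cite{bachmann-eta}; such lifts exist because the base change map on $\pi_{4i}$ is induced by the injection $\W(\Z[1/2])_{(2)} \hookrightarrow \W(\Q)_{(2)}$ of rank-one modules and a generator can be scaled into the image. The relation $x_m x_n - \binom{m+n}{n} x_{m+n}$ then lives in the free $\W(\Z[1/2])_{(2)}$-module $\pi_{4(m+n)}(\kw \wedge \HW_{(2)})$, maps to zero under base change to $\Q$, and hence vanishes by injectivity.

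For (2), I would smash the defining cofiber sequence $\Sigma^4 \kw \xrightarrow{\beta} \kw \to \HW$ with $\HW_{(2)}$ to obtain
\[ \Sigma^4(\kw \wedge \HW_{(2)}) \xrightarrow{\beta \wedge 1} \kw \wedge \HW_{(2)} \to \HW \wedge \HW_{(2)}. \]
Using (1) together with the relation $\adamsphi \beta = 9\beta \adamsphi + 8$ from Proposition \ref{prop:kw-op} to track how $\beta$ interacts with the $x_n$, the left multiplication by $\beta$ should send the summand $\Sigma^{4n}\HW_{(2)}\{x_n\}$ into $\Sigma^{4(n+1)}\HW_{(2)}\{x_{n+1}\}$ by a scalar divisible by $8(n+1)$ (up to a unit). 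The right-module action on each $x_n$ then descends to a map $\Sigma^{4n}\HW/8n \to \HW \wedge \HW_{(2)}$, and I would verify that the assembled map $\bigvee_{n \ge 0}\Sigma^{4n}\HW/8n \to \HW \wedge \HW_{(2)}$ is an equivalence. Since both sides are cellular (by the cellularity proposition above) and stable under base change among Dedekind schemes, the check reduces first to $\Spec(\Z[1/2])$ and then, on homotopy groups, to $\Q$, where it is the corresponding decomposition of \cite{bachmann-eta}.

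The main obstacle is pinning down the precise integer coefficients --- the binomials $\binom{m+n}{n}$ in (1) and the torsion orders $8n$ in (2) --- over $\Z[1/2]$ rather than over a field: in \cite{bachmann-eta} these emerge from direct Adams-operation calculations at a point, whereas here one must rely on the injectivity of $\W(\Z[1/2])_{(2)} \hookrightarrow \W(\Q)_{(2)}$ to propagate numerical identities from $\Q$ back to $\Z[1/2]$, which is only possible because each relevant group of homotopy sheaves has been identified as a free rank-one module.
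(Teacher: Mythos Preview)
Your strategy for (1) has a genuine gap in the lifting step. The map
\[ \pi_{4i}(\kw \wedge \HW_{(2)})(\Z[1/2]) \longrightarrow \pi_{4i}(\kw \wedge \HW_{(2)})(\Q) \]
is, as you say, the inclusion $\W(\Z[1/2])_{(2)} \hookrightarrow \W(\Q)_{(2)}$ of free rank-one modules (with $e_{2i}$ as common generator). But this inclusion is far from surjective: $\W(\Q)_{(2)}$ has many units (e.g.\ $\langle p\rangle$ for odd primes $p$) that do not come from $\W(\Z[1/2])_{(2)}$. So a given divided-power generator $x_i^{\Q} = v_i e_{2i}$ need not lie in the image. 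You note that one can always scale $x_i^{\Q}$ by a unit of $\W(\Q)_{(2)}$ to land in the image, and that is true --- but such rescalings destroy the relation $x_m^{\Q} x_n^{\Q} = \binom{m+n}{n}x_{m+n}^{\Q}$ unless the scaling factors satisfy $v_mv_n=v_{m+n}$, which you have no control over. In short, injectivity into $\W(\Q)_{(2)}$ can \emph{verify} a numerical identity between elements you already possess over $\Z[1/2]$; it cannot \emph{produce} those elements for you.

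The paper therefore does not argue by transfer from $\Q$ at all for (1): it observes that the proofs of \cite[Propositions~8.16 and 8.18]{bachmann-eta} are intrinsic (they proceed via the $\kw$-Adams filtration and the operation $\adamsphi$, which are available over $\Z[1/2]$ by what has already been established) and go through unchanged. For (2), rather than invoking the relation $\adamsphi\beta = 9\beta\adamsphi + 8$ in $\kw^*\kw$, the paper argues more directly: the image of $\beta$ in $\pi_4(\kw\wedge\HW_{(2)})$ is $a x_1$ for some $a \in \W(\Z[1/2])_{(2)}$, and the argument of \cite[Lemma~8.14]{bachmann-eta} forces $a=8u$ for a unit $u$; then $\beta\cdot x_n = 8u\,x_1x_n = 8u(n{+}1)x_{n+1}$ by (1), and the decomposition follows. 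Your outline for (2) is in the right spirit, but the appeal to $\adamsphi\beta = 9\beta\adamsphi + 8$ does not by itself pin down the scalar $8$ in $\beta \cdot x_n$; you still need the direct identification of the image of $\beta$.
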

\begin{proof}
(1) The proofs of \cite[Propositions 8.16 and 8.18]{bachmann-eta} can be repeated unchanged.

(2) The image of $\beta$ in $\pi_4(\kw \wedge \HW_{(2)})$ is given by $ax_1$ for some $a \in \W(\Z[1/2])$.
The argument of \cite[Lemma 8.14]{bachmann-eta} shows that $a=8u$ for some unit $u$.
The result now follows from (1), as in the proof of \cite[Corollary 8.16]{bachmann-eta}.
\end{proof}

\bibliographystyle{amsalphac}
\bibliography{bibliography}

\end{document}